\DeclarePairedDelimiter{\form}{\langle}{\rangle}
\newcommand\ba{\begin{align*}}
\newcommand\ea{\end{align*}}
\newcommand\be{\begin{enumerate}}
\newcommand\ee{\end{enumerate}}
\newcommand\bp{\begin{proof}}
\newcommand\ep{\end{proof}}
\newcommand\bpp{\begin{prop}}
\newcommand\epp{\end{prop}}
\newcommand\bpb{\begin{prob}}
\newcommand\epb{\end{prob}}
\newcommand\bd{\begin{defn}}
\newcommand\ed{\end{defn}}
\newcommand\bh{\begin{hint}}
\newcommand\eh{\end{hint}}
\newcommand\fform[1]{\langle\!\langle #1\rangle\!\rangle}
\newcommand\bR{\mathbb{R}}
\newcommand\R{\mathbb{R}}
\newcommand\bZ{\mathbb{Z}}
\newcommand\Z{\mathbb{Z}}
\renewcommand\AA{\mathcal{A}}
\newcommand\CC{\mathcal{C}}
\newcommand\FF{\mathcal{F}}
\newcommand\GG{\mathcal{G}}
\newcommand\NN{\mathcal{N}}
\newcommand\OO{\mathcal{O}}
\newcommand\supp{\operatorname{supp}}
\newcommand\Id{\operatorname{Id}}
\DeclareMathOperator\Homeo{Homeo}
\newcommand\sse{\subseteq}
\newcommand\co{\colon}
\DeclareMathOperator\Diff{Diff}
\renewcommand{\MR}[1]
{\href{http://www.ams.org/mathscinet-getitem?mr=#1}{MR#1}}
\def\thetitle{{Chain groups of homeomorphisms of the interval}}
\def\theauthors{{Sang-hyun Kim and Thomas Koberda and Yash Lodha}}
\theoremstyle{theorem}
\newtheorem{thm}{Theorem}[section]
\newtheorem{lem}[thm]{Lemma}
\newtheorem{cor}[thm]{Corollary}
\newtheorem{prop}[thm]{Proposition}
\newtheorem*{claim*}{Claim}
\newtheorem{claim}{Claim}
\newtheorem{setting}{Setting}[section]
\theoremstyle{remark}
\newtheorem{rem}[thm]{Remark}
\theoremstyle{definition}
\newtheorem{defn}[thm]{Definition}
\newtheorem{prob}{Problem}[section]
\begin{document}
\title\thetitle

\begin{abstract} 
   We introduce and study the notion of a chain group of homeomorphisms of a one-manifold, which is a certain generalization of Thompson's group $F$. The resulting class of groups exhibits a combination of uniformity and diversity. On the one hand, a chain group either has a simple commutator subgroup or the action of the group has a wandering interval. In the latter case, the chain group admits a canonical quotient which is also a chain group, and which has a simple commutator subgroup. On the other hand, every finitely generated subgroup of $\operatorname{Homeo}^+(I)$ can be realized as a subgroup of a chain group. As a corollary, we show that there are uncountably many isomorphism types of chain groups, as well as uncountably many isomorphism types of countable simple subgroups of $\operatorname{Homeo}^+(I)$. We consider the restrictions on chain groups imposed by actions of various regularities, and show that there are uncountably many isomorphism types of $3$--chain groups which cannot be realized by $C^2$ diffeomorphisms, as well as uncountably many isomorphism types of $6$--chain groups which cannot be realized by $C^1$ diffeomorphisms. As a corollary, we obtain uncountably many isomorphism types of simple subgroups of $\operatorname{Homeo}^+(I)$ which admit no nontrivial $C^1$ actions on the interval. Finally, we show that if a chain group acts minimally on the interval, then it does so uniquely up to topological conjugacy.
\end{abstract}

\keywords{homeomorphism; Thompson's group $F$; simple group; smoothing; chain group; orderable group; Rubin's Theorem}

\author[S. Kim]{Sang-hyun Kim}
\address{Department of Mathematical Sciences, Seoul National University, Seoul, 08826, Korea}
\email{s.kim@snu.ac.kr}
\urladdr{http://cayley.kr}

\author[T. Koberda]{Thomas Koberda}
\address{Department of Mathematics, University of Virginia, P. O. Box 400137, Charlottesville, VA 22904-4137, USA}
\email{thomas.koberda@gmail.com}
\urladdr{http://faculty.virginia.edu/Koberda}

\author[Y. Lodha]{Yash Lodha}
\address{EPFL SB MATH EGG, Station 8, MA C3 584 (B\^atiment MA), Station 8,  CH-1015, Lausanne, Switzerland}
\email{yash.lodha@epfl.ch}
\urladdr{https://people.epfl.ch/yash.lodha}

\maketitle


\section{Introduction}\label{sec:intro}
In this paper, we introduce and study the notion of a chain group of homeomorphism of a connected one--manifold. A chain group can be viewed as a generalization of Thompson's group $F$ which sits inside of the group of homeomorphisms of the manifold in a particularly nice way. 

We denote by $\Homeo^+(\bR)$ the group of orientation preserving homeomorphisms on $\bR$.
The \emph{support} of $f\in \Homeo^+(\bR)$ is the set of $x\in \bR$ such that $f(x)\ne x$.
The support of a group $G\le\Homeo^+(\bR)$ is defined as the union of the supports of all the elements in $G$.
For an interval $J\sse \bR$, let us denote the left-- and the right--endpoints of $J$
 by $\partial^-J$ and $\partial^+ J$, respectively.

Suppose $\mathcal{J}=\{J_1,\ldots,J_n\}$ is a collection of nonempty open subintervals of $\bR$. 
We call $\mathcal{J}$ a \emph{chain of intervals} (or an \emph{$n$--chain of intervals} if the cardinality of $\mathcal{J}$ is important) if $J_i\cap J_k=\varnothing$ if $|i-k|>1$, and if $J_i\cap J_{i+1}$ is a proper nonempty subinterval of $J_i$ and $J_{i+1}$ for $1\leq i\leq n-1$. See Figure~\ref{f:coint}.
\begin{figure}[h!]
\begin{tikzpicture}[ultra thick,scale=.5]
\draw [red] (-5,0) -- (-1,0);
\draw (-3,0) node [above] {\small $J_1$}; 
\draw [blue] (-2,.5) -- (2,.5);
\draw (0,.5) node [above] {\small $J_2$}; 
\draw [teal] (1,0) -- (5,0);
\draw (3,0) node [above] {\small $J_3$}; 
\end{tikzpicture}
\caption{A chain of three intervals.}
\label{f:coint}
\end{figure}
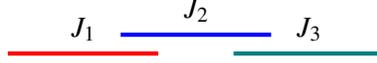

\begin{setting}\label{set}
We let $n\ge2$ and let $\mathcal{J}=\{J_1,\ldots,J_n \}$ 
be a chain of intervals such that $\partial^- J_i<\partial^- J_{i+1}$ for each $i<n$.
We consider a collection of homeomorphisms $\mathcal{F}=\{f_1,\ldots,f_n \}$ such that  $\supp f_i=J_i$
and such that $f_i(t)\ge t$ for each $t\in\bR$.
We set $G_{\mathcal{F}}=\form{\FF}\le\Homeo^+(\bR)$.
\end{setting}

We call the group $G=G_\FF$ a \emph{prechain group}. We say that $G$ is a \emph{chain group} (sometimes \emph{$n$--chain group}) if, moreover,
the group $\form{f_i,f_{i+1}}$ is isomorphic to Thompson's group $F$
 for each $i=1,2,\ldots,n-1$.
Whereas this definition may seem rather unmotivated, part of the purpose of this article is to convince the reader that chain groups are natural objects.

In this paper, we will consider chain groups as both abstract groups and as groups with a distinguished finite generating set $\mathcal{F}$ as above. Whenever we mention ``the generators" of a chain group, we always mean the distinguished generating set $\mathcal{F}$ which realizes the group as a chain group of homeomorphisms.

\subsection{Main results}
Elements of the class of chain groups enjoy many properties which are mostly independent of the choices of the homeomorphisms generating them, and at the same time can be very diverse. Moreover, chain groups are abundant in one--dimensional dynamics. Our first result establishes the naturality of chain groups:

\begin{thm}\label{thm:dynamical condition}
If $G$ is a prechain  group as in Setting~\ref{set}, then
the group \[G_N:=\form{ f^N\mid f\in\mathcal{F}}\le \Homeo^+(\bR)\] is a chain group
for all sufficiently large $N$.
\end{thm}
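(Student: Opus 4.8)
The plan is to reduce the statement to consecutive pairs of generators and then verify, for large $N$, a dynamical ``crossing'' condition that forces a two--generator chain to be isomorphic to $F$. Since $\supp f_i=J_i$ and $f_i(t)\ge t$ for all $t$, the same holds for every power: $\supp f_i^N=J_i$ and $f_i^N(t)\ge t$. Hence $\{f_1^N,\dots,f_n^N\}$ again satisfies Setting~\ref{set}, so $G_N$ is automatically a prechain group, and the only thing to prove is that $\form{f_i^N,f_{i+1}^N}\cong F$ for each $i<n$. As there are finitely many such pairs, it suffices to treat a single pair and then take $N$ above the finitely many resulting thresholds. Thus I assume $n=2$, write $f=f_1$, $g=f_2$, and set $x=\partial^-J_2$, $y=\partial^+J_1$, so that $J_1\cap J_2=(x,y)$ is the overlap and $\partial^-J_1<x<y<\partial^+J_2$.

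The heart of the argument is a dynamical sufficient condition, which I would isolate as a separate lemma: if $f,g$ are positive bumps whose supports form a $2$--chain with overlap $(x,y)$ and if
\[ f(x)\ \ge\ g^{-1}(y), \]
then $\form{f,g}\cong F$. Equivalently, writing $h=gf$ for the single positive bump supported on $J_1\cup J_2$, the condition reads $h(x)\ge y$; that is, $h$ pushes the left endpoint of the overlap all the way across to (or past) the right endpoint. I would prove this by a ping--pong/normal--form argument: the interval $[x,h(x)]$ then contains the whole overlap and serves as a fundamental domain on which to compare the two bumps, and the crossing condition guarantees that the conjugates $g^{-k}fg^{k}$ fall into the nesting and commutation pattern of the standard generators of $F$, with no further relations. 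In particular, the crossing condition is exactly what prevents a wandering interval for the joint action, which would otherwise yield a proper quotient.

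To verify the criterion for large powers, fix the pair $(f,g)$. Because $\supp f=(\partial^-J_1,y)$ contains no fixed point of $f$ in $(x,y)$ and $f\ge\Id$, the sequence $f^N(x)$ is strictly increasing and bounded above by the fixed point $y$, so $f^N(x)\nearrow y$. Symmetrically, $g$ has no fixed point in $(x,y)$ and $g\ge\Id$, whence $g^{-N}(y)\searrow x$. Therefore there is an $N_0$ with $f^N(x)>g^{-N}(y)$ for all $N\ge N_0$; that is, the pair $(f^N,g^N)$ satisfies the crossing condition. Applying the lemma gives $\form{f^N,g^N}\cong F$, and taking $N$ larger than the finitely many thresholds $N_0$ arising from the consecutive pairs completes the proof.

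The convergences $f^N(x)\to y$ and $g^{-N}(y)\to x$ are routine. The real work is the criterion itself: showing that the crossing condition produces \emph{exactly} $F$. Exhibiting a surjection from $F$ is comparatively easy; the delicate direction is the lower bound---verifying that no unexpected relations hold---which is precisely where the ping--pong estimate coming from the crossing of the two endpoint orbits enters. Identifying the correct ping--pong domains from the fundamental domain $[x,h(x)]$ and matching the resulting normal form to the standard presentation of $F$ is the main obstacle.
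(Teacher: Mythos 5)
Your overall architecture matches the paper's: reduce to consecutive pairs (trivial, since $\supp f_i^N=J_i$ and $f_i^N\geq\Id$, so $G_N$ is automatically a prechain group), isolate a crossing criterion $gf(x)\geq y$ for a two--chain, and verify it for large $N$ via the monotone convergences $f^N(x)\nearrow y$ and $g^{-N}(y)\searrow x$. All of that is correct and is exactly the paper's Lemma~\ref{lem:dynamical condition} plus the proof of Theorem~\ref{thm:dynamical}. The genuine gap is the criterion itself: you assert that the crossing condition forces $\form{f,g}\cong F$, but you only name a ping--pong/normal--form strategy and explicitly defer its execution (``the main obstacle''). As written, nothing in your argument rules out unexpected relations, so the one lemma carrying all the content of the theorem is unproved.

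Moreover, the difficulty you flag is an artifact of the route you chose: injectivity never needs to be checked dynamically. The paper's proof of the criterion is short and algebraic. First, the crossing condition gives, for every $k\geq 1$,
\[
\supp f\cap \supp\bigl((gf)^k g (gf)^{-k}\bigr)=\supp f\cap (gf)^k\supp g=\varnothing,
\]
because $gf$ maps $\supp g$ into $[y,\partial^+ J_2)$, which is disjoint from $\supp f$, and $gf\geq\Id$ keeps it there under iteration. Hence the assignment $u\mapsto f$, $b\mapsto g$ satisfies the two relations of the finite presentation $F=\form{u,b\mid [u,(bu)^k b(bu)^{-k}]\text{ for }k=1,2}$ recalled in Section~\ref{s:prelim}, yielding a surjection $F\twoheadrightarrow\form{f,g}$. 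Second, $\form{f,g}$ is nonabelian (if $[f,g]=1$ then $g$ would preserve $\supp f$, impossible since $g$ moves $\partial^+J_1$), and every proper quotient of $F$ is abelian because $F'$ is simple; therefore the surjection is an isomorphism. This shortcut eliminates precisely the ``delicate lower bound'' you identify as the hard part; without it, or a genuinely carried--out ping--pong argument in its place, your proof is incomplete. (The same trick, applied with the presentation of $F_n$, is how the paper gets the stronger conclusion that $G_N$ is in fact isomorphic to the Higman--Thompson group $F_n$.)
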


In Theorem~\ref{thm:dynamical condition} and throughout this paper, $N$ a sufficiently large exponent means that $N$ is larger than some natural number which depends on the particular generators of the chain group.

Choosing a two--chain group whose generators are $C^{\infty}$ diffeomorphisms of $\bR$, we obtain the following immediate corollary, which is a complement to (and an at least partial recovery of) a result of Ghys--Sergiescu~\cite{GhysSergiescu}:

\begin{cor}\label{cor:smooth}
Thompson's group $F$
 can be realized as a subgroup of $\Diff^{\infty}(I)$, the group of $C^{\infty}$ orientation preserving diffeomorphisms of the interval.
\end{cor}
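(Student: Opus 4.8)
The plan is to deduce Corollary~\ref{cor:smooth} from Theorem~\ref{thm:dynamical condition} by exhibiting a single explicit two--chain whose generators are already $C^\infty$, and then observing that smoothness survives the passage to large powers. First I would fix reals $a<b<c<d$ and set $J_1=(a,c)$, $J_2=(b,d)$. Then $\mathcal{J}=\{J_1,J_2\}$ is a $2$--chain of intervals: the overlap $J_1\cap J_2=(b,c)$ is a proper nonempty subinterval of each $J_i$, and $\partial^-J_1=a<b=\partial^-J_2$, so the hypotheses on $\mathcal{J}$ in Setting~\ref{set} hold (the condition $J_i\cap J_k=\varnothing$ for $|i-k|>1$ is vacuous when $n=2$).

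Next I would construct the generators as flows. Choose smooth functions $\chi_i\co\bR\to\bR$ with $\chi_i>0$ precisely on $J_i$ and $\chi_i\equiv 0$ off $J_i$, and let $f_i\in\Homeo^+(\bR)$ be the time--one map of the vector field $\chi_i(x)\,\partial_x$. Since $\chi_i$ is smooth and compactly supported, this field is complete and its flow is a one--parameter group of $C^\infty$ diffeomorphisms of $\bR$; thus each $f_i\in\Diff^\infty(\bR)$. On the region where $\chi_i$ vanishes the flow is stationary, so $f_i$ is the identity off $J_i$, while on $J_i$ every point is pushed strictly to the right. Hence $\supp f_i=J_i$ and $f_i(t)\ge t$ for all $t$, so $\mathcal{F}=\{f_1,f_2\}$ satisfies Setting~\ref{set} and $G_\FF=\langle f_1,f_2\rangle$ is a prechain group.

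Now I would apply Theorem~\ref{thm:dynamical condition}. The point to check is that replacing $f_i$ by $f_i^N$ keeps us inside Setting~\ref{set}: because $f_i(t)\ge t$ is monotone, the fixed--point set of $f_i^N$ equals that of $f_i$, so $\supp f_i^N=J_i$, and plainly $f_i^N(t)\ge t$. Thus $\{f_1^N,f_2^N\}$ is again an admissible generating collection, and the theorem yields that $G_N=\langle f_1^N,f_2^N\rangle$ is a $2$--chain group for all sufficiently large $N$. Since $n=2$, the definition of a chain group requires exactly the single condition $\langle f_1^N,f_2^N\rangle\cong F$, i.e. $G_N\cong F$. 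Finally, $f_1^N$ and $f_2^N$ are powers of $C^\infty$ diffeomorphisms, hence themselves $C^\infty$, and both are supported inside the compact interval $[a,d]$ while equalling the identity near its endpoints; rescaling $[a,d]$ to $I$ and extending by the identity realizes $G_N\cong F$ as a subgroup of $\Diff^\infty(I)$.

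Granting Theorem~\ref{thm:dynamical condition}, there is essentially no obstacle: the entire argument reduces to the standard fact that forward--pushing $C^\infty$ generators supported on prescribed intervals exist (supplied cleanly by the flow construction above), together with the elementary remark that both the $C^\infty$ regularity and the conditions $\supp f_i=J_i$ and $f_i(t)\ge t$ are preserved under taking powers. The only mildly delicate point worth spelling out is this last invariance of the support under $f_i\mapsto f_i^N$, which is exactly what lets the dynamical criterion of Theorem~\ref{thm:dynamical condition} be invoked in the smooth category.
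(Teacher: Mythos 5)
Your proposal is correct and matches the paper's approach: the paper likewise deduces Corollary~\ref{cor:smooth} immediately from Theorem~\ref{thm:dynamical condition} by choosing a two--prechain group whose generators are $C^\infty$ diffeomorphisms, which for $n=2$ stabilizes to a copy of $F$. Your flow construction of the smooth generators and the remark that supports and smoothness are preserved under taking powers simply fill in the details the paper leaves implicit.
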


Let $M$ be a one--manifold.
We say $G\le\Homeo^+(M)$ is \emph{minimal} if every orbit is dense in $\supp G=\cup_{g\in G}\supp g$.
General chain groups have remarkably uncomplicated normal subgroup structure, in relation to their dynamical features:

\begin{thm}\label{thm:simple}
For an $n$--chain group $G$, exactly one of the following holds:
\begin{enumerate}[(i)]
\item
The action of $G$ is minimal;
in this case, 
every proper quotient of $G$ is abelian
and
the commutator subgroup $G'\le G$ is simple;
\item
The closure of some $G$--orbit is perfect and totally disconnected;
in this case, 
$G$ canonically surjects onto an $n$--chain group that acts minimally.
\end{enumerate}
\end{thm}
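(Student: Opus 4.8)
My plan is to separate the purely dynamical dichotomy from the algebraic conclusions, and to treat case (i) and case (ii) in parallel. Throughout, write $I=\supp G=(\partial^-J_1,\partial^+J_n)$, a connected open interval, and record the elementary consequences of Setting~\ref{set}: since $\supp f_i=J_i$ and $f_i(t)\ge t$, in fact $f_i(t)>t$ for every $t\in J_i$, so $f_i$ has no interior fixed point, and $f_i^{\pm k}(t)$ converges monotonically to the attracting endpoint $\partial^+J_i$ (resp.\ the repelling endpoint $\partial^-J_i$). Because consecutive intervals overlap, $\partial^+J_i\in J_{i+1}$ and $\partial^-J_{i+1}\in J_i$, so these endpoints interleave along the chain. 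Two facts follow at once: $G$ has no global fixed point in $I$ (each point of $I$ lies in some $J_i$, hence is moved by $f_i$), and since $G$ preserves orientation it has no finite orbit in $I$ at all.

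The dichotomy is then a statement about minimal sets. First I would show that the pushing dynamics force orbits to spread across the whole chain: if a closed $G$--invariant set $K$ meets some $J_i$, then powers of $f_i$ put $\partial^\pm J_i\in K$, and the overlaps propagate this, so $K$ contains the finite set $S$ of all interior chain endpoints. Hence any two closed invariant sets meeting $I$ both contain $S$ and so intersect, and an application of Zorn's lemma on the compact interval $\overline I$ produces a unique minimal (among those meeting $I$) closed invariant set $\Lambda$, with $\overline{Gx}\supseteq\Lambda$ for every $x\in I$. A boundary argument completes the dichotomy: $\partial_I\Lambda=\Lambda\setminus\operatorname{int}_I\Lambda$ is closed and $G$--invariant, so by minimality it is either empty, forcing $\Lambda$ open and closed in $I$, whence $\Lambda=I$ and (by $\overline{Gx}\supseteq\Lambda$) the action minimal; or all of $\Lambda$, forcing $\Lambda$ nowhere dense. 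In the latter case the set of non-isolated points of $\Lambda$ is closed, $G$--invariant and nonempty (it contains $S$, each point of which is a limit of its own orbit by the pushing dynamics), hence equals $\Lambda$ by minimality; thus $\Lambda$ is perfect, and being nowhere dense it is totally disconnected, and $\Lambda=\overline{Gx}$ for any $x\in\Lambda$. This is case (ii), plainly exclusive of case (i).

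For the algebraic content of case (i), the crux is an absorption principle: \emph{every nontrivial normal subgroup $N\trianglelefteq G$ contains $G'$}. I would prove it by the classical commutator trick with disjoint supports. Choosing $1\ne h\in N$ and an interval $J$ with $h(J)\cap J=\varnothing$, for any $g$ with $\supp g\subseteq J$ the commutator $[g,h]\in N$ agrees with $g$ on $J$ (its two halves $g$ and $hg^{-1}h^{-1}$ have disjoint supports), so $N$ contains a copy of every such $g$. Minimality lets me move $J$ by elements of $G$ to cover $I$, and the Thompson subgroups $\form{f_i,f_{i+1}}\cong F$ supply locally supported generators; combining these I would argue the locally supported elements generate $G'$, so $G'\le N$. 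Granting this, any proper quotient $G/N$ is a quotient of $G/G'$, hence abelian. To get simplicity of $G'$ I would verify that $G'$ itself acts minimally on $I$, is perfect, and still contains nontrivial elements supported in every subinterval, and then re-run the commutator argument internally: any nontrivial $M\trianglelefteq G'$ absorbs $[G',G']=G'$, so $M=G'$.

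The main obstacle is the generation step inside the absorption principle: showing that the elements produced by the commutator trick, which a priori are only restrictions of arbitrary $g$ supported in small intervals, genuinely generate all of $G'$. This is where minimality and the subgroups $\form{f_i,f_{i+1}}\cong F$ must be combined carefully—for instance to fragment a general commutator into pieces each supported in a translate of $J$—and it is the step that must be re-examined for the internal application to $G'$, where one first has to establish that $G'$ inherits minimality and local richness. By contrast, in case (ii) the remaining assertion is comparatively soft: collapsing the countably many gaps of the Cantor set $\Lambda$ to points defines a monotone surjection $\overline I\to\overline I$ semiconjugating the $G$--action to a minimal action; the generators descend to a chain whose consecutive pairs remain isomorphic to $F$, giving the canonical surjection of $G$ onto a minimal $n$--chain group, to which case (i) then applies.
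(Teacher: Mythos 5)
Your dichotomy argument is correct, and is in effect a self-contained proof of what the paper outsources: the paper only records that every orbit accumulates at $\partial^+\supp f_1$ (Lemma~\ref{lem:prechain}(1)) and then invokes the general orbit-closure dichotomy of Lemma~\ref{lem:dichotomy}, whose proof (after Navas) it omits; your interleaved-endpoints argument supplies that. Your case (ii) also matches the paper's minimalization argument, though the assertion that ``consecutive pairs remain isomorphic to $F$'' deserves its one-line reason, which the paper gives: since $\Lambda$ accumulates at both endpoints of each $J_i$, the collapsed supports are nondegenerate intervals still forming a chain; the image of $\form{f_i,f_{i+1}}$ is then a \emph{nonabelian} quotient of $F$ (commuting homeomorphisms cannot have properly overlapping interval supports), and every proper quotient of $F$ is abelian.

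The genuine gap is in case (i), exactly where you flag it, and it is not a loose end one should expect to tighten: it is the crux of the theorem. Your absorption principle (which is a true statement) is proved in your outline only modulo a fragmentation property: that $G'$ is generated by elements supported in $G$-translates of the displaced interval $J$. The generators $f_i$ have full-interval support; a chain group consists of arbitrary homeomorphisms, for which no fragmentation technique exists (unlike the PL or smooth settings); and the only evident argument that locally supported elements generate $G'$ is that they generate a nontrivial subgroup of $G'$ normal in $G$, which ``must'' therefore be all of $G'$ --- i.e.\ it presupposes the simplicity being proved. The same circularity recurs in your internal run for $M\trianglelefteq G'$, where you additionally need, and do not prove, that $G'$ acts minimally and has nontrivial rigid stabilizers; neither is automatic for a normal subgroup of a minimal group.

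The paper closes case (i) by the opposite maneuver, which requires no generation statement at all: Higman's displacement criterion (Theorem~\ref{thm:higman}, Lemma~\ref{lem:higman}). Given $1\ne t\in N\trianglelefteq H$ and $r,s\in H$, one displaces a conjugate $v=t^u\in N$ so that $v(\supp r\cup\supp s)\cap(\supp r\cup \supp s)=\varnothing$; then $r^v$ commutes with both $r$ and $s$, whence $[r,s]=[r(r^v)^{-1},s]=[[r,v],s]\in N$. Nothing is ever cut into pieces; instead of moving $r,s$ into the small interval, one moves (a conjugate of) the given element of $N$ \emph{off} of $\supp r\cup\supp s$. What must be verified is the displacement hypothesis, i.e.\ CO-transitivity of $H=G'$ on its support (Lemmas~\ref{lem:higman} and~\ref{lem:cpt-trans}), and that is the role of the paper's key Lemma~\ref{lem:prechain}(2): writing $g=s_k\cdots s_1\in G$, one multiplies $g$ by conjugates $h_is_i^{-1}h_i^{-1}$ supported in tiny intervals at the two extreme ends of the chain, producing $u\in G'$ that agrees with $g$ on a prescribed compact set. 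This single lemma gives that every $G$-orbit is a $G'$-orbit (so $G'$ is minimal), that $G'$ is locally CO-transitive and hence CO-transitive, and then --- together with perfectness $G''=G'$ (Lemma~\ref{lem:center}) and the fact that elements of $G'$ are compactly supported because the germs of $G$ at the two ends are abelian quotients --- Higman's criterion yields simplicity of $G'$; the statement that every proper quotient of $G$ is abelian then follows from a short centralizer argument (if $N\cap G'=1$ then $N$ centralizes the minimal group $G'$, forcing a fixed-point-free element to preserve a bounded support, which is absurd). To complete your outline you should replace the fragmentation step by this displacement mechanism; I do not see how the fragmentation route itself can be made to work.
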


A general chain group may fail to have a simple commutator subgroup (Proposition~\ref{prop:blow-up}).

Every finitely generated subgroup of $\Homeo^+(\bR)$ embeds into a minimal chain group, so that the subgroup structure of chain groups can be extremely complicated:

\begin{thm}\label{thm:embed}
Let $G=\form{f_1,f_2,\ldots,f_n}\le \Homeo^+(\bR)$ for some $n\ge2$.
\be
\item
Then $G$ embeds into an $(n+2)$--chain group.
\item
If $\supp f_1$ has finitely many components,
then $G$ embeds into an $(n+1)$--chain group.
\ee
\end{thm}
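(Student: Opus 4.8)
The plan is to install a faithful copy of $G$ inside a prechain group whose generators can then be upgraded to a chain group by Theorem~\ref{thm:dynamical condition}. Since only an abstract embedding is required, we may replace the given action of $G$ by any faithful one; fix such an action with $\supp G\sse D$ for $D=(0,1)$. Note first that the distinguished generators $f_i$ cannot themselves be the generators of a chain, for in a chain the supports of $f_1$ and $f_3$ would be disjoint and the two would therefore commute, a relation generally absent in $G$; consequently the $f_i$ must be recovered as words in the chain generators.

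For the construction I would introduce a right--pusher $h_1$, the \emph{shift}, supported to the right of $D$ and acting properly on a half--line with fundamental domains $D=D_0,\ D_1=h_1(D_0),\ D_2=h_1^2(D_0),\dots$. For $1\le j\le n$, I then install a copy of $f_j$ on the block $D_{j-1}$: the generator $h_{j+1}$ is supported on $D_{j-1}\cup D_j$, agreeing on $D_{j-1}$ with the copy of $f_j$ transported there by $h_1^{\,j-1}$ and spilling into $D_j$ only far enough to overlap its right neighbour. A final right--cap $h_{n+2}$ closes the chain. By construction each $h_{j+1}$ meets only $h_j$ and $h_{j+2}$, so the supports $J_1,\dots,J_{n+2}$ form a chain of intervals with $h_1$ and $h_{n+2}$ as the two extra generators. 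The copy of $G$ is then read off on $D_0$: conjugating the installed generators back along the chain by powers of the shift produces elements acting as $f_1,\dots,f_n$ on $D_0$, and because the entire support of the chain is a single interval these transported elements interact and reproduce the relations of $G$ rather than the spurious commutations of the raw chain generators.

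I expect the main difficulty to be twofold. First is the faithfulness of the encoding: I must exhibit finitely many words $\gamma_1,\dots,\gamma_n$ in the chain generators whose restrictions to $D_0$ are $f_1,\dots,f_n$ and for which no nontrivial relation of $G$ is destroyed off of $D_0$. The elements that are obviously faithful are shift--equivariant, hence have full support rather than a single block; reconciling this equivariance with finite generation and with the chain condition is the heart of the matter, and I would control it by a standard $\mathrm{HNN}$/shift--embedding argument tracking exactly which words act trivially on $D_0$. Second is the passage from a prechain group to a chain group: applying Theorem~\ref{thm:dynamical condition} replaces each $h_j$ by $h_j^{\,N}$, which would realize $f_j^{\,N}$ rather than $f_j$ on $D_0$, so I would either install $N$--th roots of the $f_j$ from the outset, or instead choose each consecutive pair $\form{h_j,h_{j+1}}$ to generate a copy of Thompson's group $F$ directly and thereby bypass Theorem~\ref{thm:dynamical condition} altogether. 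Finally, for part~(2): when $\supp f_1$ has finitely many components, $f_1$ is tame enough to be extended across $D_0$ to a right--pusher that anchors the left end of the chain, so that it can absorb the role of one of the two connectors; this saves a generator and yields an $(n+1)$--chain group.
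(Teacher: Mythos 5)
Your construction stalls at exactly the point you yourself call ``the heart of the matter,'' and the deferral to ``a standard HNN/shift--embedding argument'' does not close the gap. With chain generators $h_{j+1}$ supported on $D_{j-1}\cup D_j$, restricting on $D_{j-1}$ to the transported $f_j$ and carrying spillover on $D_j$, the transported elements $\gamma_j=h_1^{-(j-1)}h_{j+1}h_1^{j-1}$ act as $f_j$ on $D_0$ and as uncontrolled junk on $D_1$. Hence restriction to $D_0$ is a surjective homomorphism from $\form{\gamma_1,\ldots,\gamma_n}$ onto $G$, not an injection of $G$: whenever $w(f_1,\ldots,f_n)=1$ in $G$, the element $w(\gamma_1,\ldots,\gamma_n)$ is in general a nontrivial homeomorphism supported in $D_1$ (already for abelian $G$ the commutators $[\gamma_i,\gamma_j]$ survive). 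So your chain group exhibits $G$ as a quotient of a subgroup, which proves nothing about embeddings; your sentence that the transported elements ``reproduce the relations of $G$'' is precisely what fails. The natural repair within your framework --- make the spillover of $h_{j+1}$ a full second copy of $f_j$ on $D_j$, so that $f_j\mapsto\gamma_j$ becomes a faithful diagonal homomorphism --- instead destroys the chain structure, because then $\supp h_{j+1}$ is two separated copies of the (possibly badly disconnected) set $\supp f_j$, while Setting~\ref{set} requires each generator's support to \emph{be} an interval $J_i$. This is exactly the tension you describe between faithfulness and interval supports, and no standard shift/HNN argument resolves it: the paper's Lemma~\ref{lem:comm-lemma} does kill such junk by an exponent--sum argument, but only under the hypothesis $G/G'\cong\bZ^n$, which fails for general $G$ (e.g.\ finitely generated perfect subgroups of $\Homeo^+(\bR)$) and is accordingly used in the paper only for the refined statement about embedding into the commutator subgroup.

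The idea you are missing is that one should not encode $G$ on a block at all: the paper makes $G$ a subgroup tautologically and spends its effort showing the ambient group is a chain group. After conjugating so that $\supp G\sse(1/4,1/2)$, set $L=\form{f_1,\ldots,f_n,b,a}$, so that $G\le L$ with its original action and faithfulness is free. Since each $f_j$ is supported in $(1/4,1/2)$, Lemma~\ref{lem:bA} shows $bf_j$ lies in the class $\AA$ of homeomorphisms equal to the identity on $(-\infty,0]$ and equal to the translation $a$ on $[1,\infty)$; multiplying by the pusher $b$ connectifies the support of $f_j$, which is what your spillover cannot do. Then $L=\form{bf_1,\ldots,bf_n,b,a}$, and Lemma~\ref{lem:chain-lemma} turns any $g_1,\ldots,g_{n+1}\in\AA$ together with $a$ into a chain generating set: the telescoping words $c_0=g_1^{-1}a$, $c_i=(a^ig_{i+1}^{-1}a^{-i})(a^{i-1}g_ia^{1-i})$ for $1\le i\le n$, and $c_{n+1}=a^{n}g_{n+1}a^{-n}$ have supports $(-\infty,1)$, $(i-1,i+1)$, $(n,\infty)$ forming an $(n+2)$--chain, satisfy the overlap condition $c_{i+1}c_i(i)=i+1$, and therefore generate consecutive copies of $F$ by the exact criterion of Lemma~\ref{lem:dynamical condition} --- no passage to powers, which also disposes of your worry about Theorem~\ref{thm:dynamical condition} (your proposed fix, arranging consecutive pairs to generate $F$ outright, is indeed how the paper proceeds). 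Telescoping shows these words generate all of $L\supseteq G$. Finally, for part (2) the paper does not extend $f_1$ into a connector as you suggest; it conjugates so that $f_1$ becomes an element of $F=\form{a,b}$ (possible precisely because $\supp f_1$ has finitely many components), making $f_1$ a redundant generator and thereby saving one interval of the chain.
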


The notion of \emph{rank} of a chain group is somewhat subtle. Indeed, the next proposition shows that a given $n$--chain group not only contains $m$--chain groups for all $m$, but is in fact isomorphic to an $m$--chain group for all $m\geq n$:

\begin{prop}\label{prop:subchain group}
For $m\ge n\geq 2$, each $n$--chain group is isomorphic to some $m$--chain group.
\end{prop}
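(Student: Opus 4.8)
The plan is to induct on $m$. Since there is nothing to prove when $m=n$, it suffices to prove the single step that every $n$--chain group is isomorphic to an $(n+1)$--chain group, and then to iterate this $m-n$ times. So fix an $n$--chain group $G=\langle f_1,\dots,f_n\rangle$ with chain of intervals $J_1,\dots,J_n$ as in Setting~\ref{set}. Because $J_i\cap J_j=\varnothing$ whenever $|i-j|\ge 2$, the terminal generator $f_n$ commutes with $f_1,\dots,f_{n-2}$, and its only nontrivial interaction with the rest of $G$ is through the terminal copy $\langle f_{n-1},f_n\rangle\cong F$. The strategy is to leave $f_1,\dots,f_{n-1}$ essentially in place and to replace the single generator $f_n$ by two generators forming one extra chain link to the right. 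One cannot do this while keeping $G$ literally as a subgroup of $\Homeo^+(\bR)$: a new rightmost link must, by the chain axioms, protrude to the right of $J_n$, forcing a new generator to move points fixed by all of $G$. Thus the new $(n+1)$--chain group will be an \emph{abstractly isomorphic} copy of $G$ acting on a larger interval; note that the two actions cannot be topologically conjugate, since conjugation preserves the inclusion pattern of supports.

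The engine of the step is the classical isomorphism between $F$ and the group of dyadic piecewise--linear homeomorphisms of the doubled interval (the Higman--Thompson group $F_{2,2}\cong F$). Realizing the terminal copy $\langle f_{n-1},f_n\rangle\cong F$ instead as $F_{2,2}$ acting on a subdivided interval with a second fundamental domain appended on the right furnishes exactly the room we need: one extracts positive homeomorphisms $u$ and $v$ with $\supp u$ overlapping the right portion of $J_{n-1}$ properly and $\supp v$ contained in the appended domain, disjoint from $J_{n-1}$, so that $(f_{n-1},u,v)$ is a $3$--chain with $\langle f_{n-1},u\rangle\cong\langle u,v\rangle\cong F$ and $\langle f_{n-1},u,v\rangle$ isomorphic to the original terminal $F$. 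Setting $g_i=f_i$ for $i\le n-1$, $g_n=u$ and $g_{n+1}=v$, and arranging all new supports to lie to the right of $J_{n-2}$ (possible because $J_{n-2}\cap J_n=\varnothing$ already places the old overlap $J_{n-1}\cap J_n$ to the right of $J_{n-2}$), one checks directly that $J_1,\dots,J_{n-1},\supp u,\supp v$ form an $(n+1)$--chain and that every consecutive pair generates $F$. The base case $n=2$ of the statement, namely that $F$ itself is a $3$--chain group, is simply the first instance of this construction.

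The crux, and the step I expect to be the main obstacle, is to prove that $\langle g_1,\dots,g_{n+1}\rangle$ is abstractly isomorphic to $G$, rather than merely surjecting onto it or containing it. Two things must be reconciled. First, the new action of the terminal block must restrict to the old action of $f_{n-1}$ on the part of $J_{n-1}$ it shares with $J_{n-2}$, so that the pairing $\langle f_{n-2},f_{n-1}\rangle\cong F$ and the entire left portion of the chain are undisturbed; this forces us to choose the $F_{2,2}$--realization so that it extends, rather than replaces, the given dynamics of $f_{n-1}$ near its left end. Second, granting this, one assembles the block isomorphism $\langle f_{n-1},f_n\rangle\to\langle f_{n-1},u,v\rangle$ fixing $f_{n-1}$ with the identity on $\langle f_1,\dots,f_{n-1}\rangle$ into a homomorphism $G\to\langle g_1,\dots,g_{n+1}\rangle$: every defining relation of $G$ that involves $f_n$ is either a commuting relation with some $f_i$, $i\le n-2$, or a relation internal to the terminal $F$, and each of these survives because $u$ and $v$ satisfy the same commuting relations and $\langle f_{n-1},u,v\rangle$ realizes the same abstract $F$ with the same distinguished generator $f_{n-1}$. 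Verifying that this map is injective is delicate precisely because, as the discussion preceding the proposition indicates, the isomorphism type of a chain group is genuinely sensitive to its generating configuration; here it is controlled by the fact that the modification is confined to the terminal block while the left $(n-1)$--chain group $\langle f_1,\dots,f_{n-1}\rangle$ is carried along unchanged.
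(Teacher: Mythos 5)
Your induction skeleton (prove the single step $n\to n+1$ and iterate) is the same as the paper's, but the step itself has a fatal gap. It occurs where you define the homomorphism $G\to\langle g_1,\dots,g_{n+1}\rangle$ by fixing $f_1,\dots,f_{n-1}$ and sending $f_n$ into the new terminal block, and justify well-definedness by asserting that ``every defining relation of $G$ that involves $f_n$ is either a commuting relation with some $f_i$, $i\le n-2$, or a relation internal to the terminal $F$.'' A chain group is defined by its action, not by a presentation, and its relations are \emph{not} generated by commutations plus relations internal to consecutive blocks. If they were, every $n$--chain group would coincide with one of countably many ``universal'' groups determined by the block data, namely $\langle x_1,\dots,x_n\mid \text{block relations},\ [x_i,x_j]=1 \text{ for } |i-j|\ge2\rangle$, and there would be at most countably many isomorphism types of $n$--chain groups; this contradicts Theorem~\ref{thm:uncountable}, which produces uncountably many for $n\ge3$. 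So in general $G$ satisfies relations mixing all the generators that are invisible to block-by-block bookkeeping, your map need not be well defined (let alone injective), and no choice of $u,v$ made purely inside the terminal block can repair this, because the obstruction is global. The sentence admitting that injectivity is ``delicate'' is precisely where a proof would have to go, and nothing in the construction supplies one.

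Moreover, the premise that forces you into this bind --- that ``one cannot do this while keeping $G$ literally as a subgroup of $\Homeo^+(\bR)$'' because a new rightmost link must protrude beyond $J_n$ --- is false, and seeing why is exactly how the paper avoids the problem. In the paper's proof (Theorem~\ref{thm:n to m}) the group never changes; only the generating set does. Writing $p=f_{n-2}$, $q=f_{n-1}$, $r=f_n$, one takes, for $M\gg0$,
\[
d=q^{-M}pq^{M},\qquad f=(pr)^{M}q(pr)^{-M},\qquad e=f^{-M}qf^{M},
\]
all of which are elements of $G$, and checks two things: first, by the dynamical criterion of Lemma~\ref{lem:dynamical condition}, that $f_1,\dots,f_{n-3},d,e,f^{M},r$ is an $(n+1)$--chain generating set (the extra link is inserted \emph{inside} the existing support, between $\supp f_{n-3}$ and $\supp r$, not appended to the right of it); and second, that $\langle d,e,f^{M},r\rangle=\langle p,q,r\rangle$, so that these elements generate exactly $G$. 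Since the new generators already lie in $G$ and generate it, no abstract isomorphism has to be built and the question of which relations hold in $G$ never arises. If you want to salvage your approach, this is the repair: manufacture the new chain out of conjugates of the old generators by elements of $G$, rather than out of new homeomorphisms glued on from outside.
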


Thus, we are led to define the rank of an $n$--chain group $G$ to be the minimal $n$ for which $G$ is isomorphic to an $n$--chain group. In light of Proposition~\ref{prop:subchain group}, it is not clear if there exist chain groups of all ranks, though there in fact are:

\begin{prop}\label{prop:abelianization}
For all integers $2\leq k\leq n$, there exists an $n$--chain group $G_k$ such that $H_1(G_k,\bZ)\cong\bZ^k$.
\end{prop}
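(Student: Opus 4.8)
The plan is to reduce, via Proposition~\ref{prop:subchain group}, to a single family of examples and then pin down their abelianizations by exhibiting explicit characters. Since $H_1$ is an isomorphism invariant and, by Proposition~\ref{prop:subchain group}, every $k$--chain group is isomorphic to an $n$--chain group whenever $n\ge k$, it suffices to construct, for each $k\ge2$, a $k$--chain group $G_k$ with $H_1(G_k,\bZ)\cong\bZ^k$; reading this $G_k$ as an $n$--chain group then settles all cases $2\le k\le n$ simultaneously, the case $k=2$ being Thompson's group $F$ itself. The homological bookkeeping for the remaining cases is clean: a $k$--chain group is generated by its $k$ distinguished generators $f_1,\dots,f_k$, so $H_1(G_k,\bZ)$ has rank at most $k$; if I can produce $k$ linearly independent homomorphisms $G_k\to\bR$, the rank is exactly $k$; and a rank--$k$ abelian group generated by $k$ elements is necessarily free of rank $k$, since any torsion would force a $(k+1)$--st generator. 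Thus the whole problem reduces to realizing a $k$--chain group carrying $k$ independent real characters.

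Two of these characters come from the two ends of the chain. Fixing a piecewise--linear (or smooth) realization, the logarithms of the one--sided derivatives at the global endpoints $\partial^-J_1$ and $\partial^+J_k$ define homomorphisms $\alpha,\omega\co G_k\to\bR$, precisely because those two points are fixed by all of $G_k$; choosing $f_1,f_k$ to have nontrivial germs there gives $\alpha(f_1)\ne0$ and $\omega(f_k)\ne0$, while $\alpha$ and $\omega$ vanish on every other generator, since $f_j$ fixes a neighbourhood of each global endpoint for $1<j<k$. The difficulty is the remaining $k-2$ characters, one for each interior generator $f_j$. These cannot come from endpoint germs, because every interior point of $\supp G_k=\bigcup_i J_i$ is moved by some generator; nor can they come from a $G_k$--invariant Radon measure on the line, since invariance under $f_j$ would force infinite mass to pile up at the interior points $\partial^\pm J_j$.

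The way out is to abandon minimality. A minimal realization would force the interior generators into the commutator subgroup (compatibly with Theorem~\ref{thm:simple}(i)) and collapse $H_1$; so instead I realize $G_k$ by a non--minimal action with wandering intervals, in the spirit of the blow--up of Proposition~\ref{prop:blow-up}. Concretely, I start from a minimal $k$--chain group and blow up a carefully chosen orbit of a point $z_j$ lying in the \emph{private region} $M_j=(\partial^+J_{j-1},\partial^-J_{j+1})$ of each interior generator $f_j$, that is, the open interval on which only $f_j$ acts. The inserted wandering intervals carry a combinatorial ``level'' in the $f_j$--direction, and the induced $G_k$--equivariant labelling yields a $\bZ$--valued displacement cocycle $\beta_j$ with $\beta_j(f_j)\ne0$; because $z_j$ is fixed by every $f_i$ with $i\ne j$, one reads off $\beta_j(f_i)=0$ for $i\ne j$. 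The $k\times k$ matrix of values of $(\alpha,\beta_2,\dots,\beta_{k-1},\omega)$ on $(f_1,\dots,f_k)$ is then diagonal with nonzero diagonal, so the characters are independent. Finally, blowing up may destroy the isomorphism type of the consecutive pairs, so I pass to a common large power $N$: by Theorem~\ref{thm:dynamical condition} the group $\form{f_i^N}$ is again a $k$--chain group, each character is merely rescaled by $N$ on the generators, and independence --- hence $H_1\cong\bZ^k$ --- persists.

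The main obstacle is the construction of the interior characters $\beta_j$, equivalently the assertion that the interior generators survive in $H_1$ rather than dissolving into commutators. This is exactly the tension running through the paper: a minimal realization has simple commutator subgroup and tiny abelianization, whereas the homology we want requires genuinely non--minimal, decoupled dynamics on the private regions. The technical heart is therefore to arrange the wandering orbit so that it supports a $G_k$--invariant $\bZ$--valued level function --- so that $\beta_j$ is an honest homomorphism and not merely a quasimorphism --- while simultaneously keeping each pair $\form{f_i,f_{i+1}}$ isomorphic to $F$ on the overlaps. Reconciling the freeness needed in the private regions with the rigid Thompson structure forced on the overlaps is where the real work lies.
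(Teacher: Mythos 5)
Your opening reduction via Proposition~\ref{prop:subchain group} is exactly the paper's first step, but everything you build on top of it rests on a false premise and has a hole that you yourself flag. The premise is your claim that a minimal realization ``would force the interior generators into the commutator subgroup (compatibly with Theorem~\ref{thm:simple}(i)) and collapse $H_1$.'' Theorem~\ref{thm:simple}(i) says only that every \emph{proper} quotient of a minimal chain group is abelian and that $G'$ is simple; it puts no bound whatsoever on the rank of $G/G'$. In fact minimal chain groups isomorphic to the Higman--Thompson group $F_k$ exist (any chain group isomorphic to $F_k$, as furnished by Theorem~\ref{thm:dynamical}, is either already minimal or, by Theorem~\ref{thm:dichotomy}(ii), surjects onto a minimal chain group, and that surjection must be injective because proper quotients of $F_k$ are abelian while chain groups are not), and in \emph{any} chain-group realization of $F_k$ the $k$ distinguished generators map to a basis of $H_1(F_k,\bZ)\cong\bZ^k$ --- in particular the interior generators do not lie in the commutator subgroup. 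So the obstruction your blow-up is designed to circumvent does not exist. Moreover, the construction is incomplete exactly at its crux: you never prove that the level displacement $\beta_j$ is a genuine homomorphism rather than a quasimorphism, and you concede that reconciling this with the Thompson structure on the overlaps ``is where the real work lies.'' That is the whole problem, not a technicality; a priori the relations coming from the pairs $\form{f_i,f_{i+1}}\cong F$ could force any such $\beta_j$ to vanish on $f_j$.

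Your final repair step also undoes whatever the blow-up might have achieved. Passing to high powers does not merely restore the chain-group property: by stabilization (Theorem~\ref{thm:dynamical}, equivalently Proposition~\ref{prop:stabilization-chain}), for $N$ large the group $\form{f_1^N,\ldots,f_k^N}$ is isomorphic to $F_k$, independently of the preceding dynamics. So at the end you are holding $F_k$ anyway --- and that observation \emph{is} the paper's proof: $F_k$ is isomorphic to a $k$--chain group, its abelianization is $\bZ^k$ (immediate from the infinite presentation in Subsection~\ref{ss:thompson}), and Theorem~\ref{thm:n to m} promotes it to an $n$--chain group for every $n\ge k$. No endpoint-germ characters, wandering intervals, or level functions are needed; the example you were looking for was already named in the paper.
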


Clearly an $n$--chain group whose abelianization is isomorphic to $\bZ^n$ cannot have rank less than $n$.

Throughout this paper, we will say that a group is \emph{infinitely presented} if it is not finitely presentable. Theorem \ref{thm:embed} has several consequences which we note here.
By definition, there is only one isomorphism type of $2$--chain groups.
This is in stark contrast to the case $n\ge3$, where
there exist uncountably many isomorphism types of $n$--chain groups. More precisely, we have the following:

\begin{thm}\label{thm:uncountable}
Let $n\ge3$. Then there exists an uncountable set $\GG$ of infinitely presented $n$--chain groups with simple commutator subgroups such that $G'\not\cong H'$ for all distinct $G,H\in\GG$.\end{thm}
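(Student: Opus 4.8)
The plan is to combine the embedding theorem with a soft counting principle. The invariant I attach to a countable group $S$ is the set $\mL(S)$ of isomorphism types of its finitely generated subgroups; this is an isomorphism invariant of $S$, and it is \emph{countable} whenever $S$ is countable, since a countable group has only countably many finite subsets. The engine of the proof is therefore the observation that uncountably many pairwise non-isomorphic finitely generated groups cannot all embed into countably many countable groups: if the $G'_\alpha$ realized only countably many isomorphism types $T_1,T_2,\dots$, then $\bigcup_\alpha \mL(G'_\alpha)\subseteq\bigcup_i \mL(T_i)$ would be a countable union of countable sets, hence countable.

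First I would fix an uncountable family $\{H_\alpha\}$, indexed by an uncountable set and pairwise non-isomorphic, of two--generated subgroups of $\Homeo^+(\bR)$. Such a family exists: there are uncountably many isomorphism types of finitely generated left--orderable groups, each such group embeds into $\Homeo^+(\bR)$, and each embeds into a two--generated left--orderable group, so the same counting principle already forces uncountably many isomorphism types among the two--generated hosts. For the fixed $n\ge 3$, I would then feed each $H_\alpha$ into Theorem~\ref{thm:embed}: realizing $H_\alpha$ so that its first generator has support with finitely many components, part~(2) embeds $H_\alpha$ into a minimal $3$--chain group, which by Proposition~\ref{prop:subchain group} (valid since $n\ge 3$) is isomorphic to an $n$--chain group $G_\alpha$. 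Because the action is minimal, Theorem~\ref{thm:simple}(i) guarantees that $G'_\alpha$ is simple.

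The crucial refinement is to ensure that the embedded copy of $H_\alpha$ actually lies in $G'_\alpha$, so that $[H_\alpha]\in\mL(G'_\alpha)$; merely having $H_\alpha\le G_\alpha$ is not enough, since isomorphism of the large chain groups need not be detected by their commutator subgroups. I would secure this by noting that the abelianization $G_\alpha\to H_1(G_\alpha;\bZ)$ factors through the germs at the finitely many endpoints of the constituent intervals $J_1,\dots,J_n$ of the chain, so any element supported in the interior of a single constituent interval, away from those finitely many points, lies in $\ker(G_\alpha\to H_1(G_\alpha;\bZ))=G'_\alpha$; hence the copy of $H_\alpha$ can be arranged inside $G'_\alpha$. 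Granting this, the counting principle finishes the proof at once: $\bigcup_\alpha \mL(G'_\alpha)$ contains the uncountably many distinct types $[H_\alpha]$, so $\{G'_\alpha\}$ realizes uncountably many isomorphism types. Selecting one $G_\alpha$ per isomorphism type of $G'_\alpha$ gives uncountably many $n$--chain groups with simple, pairwise non-isomorphic commutator subgroups, and discarding the at most countably many finitely presented members (only countably many finite presentations exist) leaves the required uncountable, infinitely presented family $\GG$.

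The hard part is exactly the refinement placing the copy of $H_\alpha$ inside the simple group $G'_\alpha$ rather than merely inside $G_\alpha$; everything else is the countability bookkeeping. Extracting this from the construction of Theorem~\ref{thm:embed} is the step I expect to require genuine care, as the embedded generators of $H_\alpha$ a priori carry nontrivial abelianization. Should a direct placement prove awkward, the fallback is Rubin--type rigidity: an abstract isomorphism $G'_\alpha\cong G'_\beta$ would be induced by a topological conjugacy of the two minimal actions, which one promotes to an isomorphism $G_\alpha\cong G_\beta$ and hence to $H_\alpha\cong H_\beta$; this reduces the distinction to the level of the $G_\alpha$, for which the unrefined inclusion $H_\alpha\le G_\alpha$ already suffices.
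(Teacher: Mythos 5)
Your global architecture --- embed an uncountable family of finitely generated left-orderable groups into commutator subgroups of minimal chain groups, count finitely generated subgroups of countable groups, and discard the countably many finitely presentable chain groups --- is indeed the paper's architecture, and you correctly isolate the crux: the copy of $H_\alpha$ must land in $G_\alpha'$, not merely in $G_\alpha$. However, the device you propose for this crux rests on a false claim. The abelianization of an $n$--chain group with $n\ge 3$ is \emph{not} detected by germs at the endpoints of $J_1,\dots,J_n$: the interior endpoints $\partial^{\pm}J_i$ are not fixed by $G$, so germs there do not define quotients of $G$ at all, and the germs at the two extreme ends of $\supp G$ only yield a $\Z^2$--quotient, whereas $H_1(G;\Z)$ typically has larger rank (any chain group isomorphic to $F_n$ has $H_1\cong \Z^n$). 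Consequently ``supported in one $J_i$ away from all endpoints'' does not imply membership in $G'$: in a minimal chain group whose abelianization has rank at least $3$, the compactly supported elements form a normal subgroup $K$ with $K/G'$ infinite, and since $G'$ acts CO-transitively (Lemmas~\ref{lem:prechain} and~\ref{lem:cpt-trans}), any $k\in K\setminus G'$ can be conjugated by an element of $G'$ to have support inside an arbitrarily small interval avoiding every endpoint, while remaining outside $G'$. This failure occurs in exactly your situation: in the chain group $L$ of Theorem~\ref{thm:fg-to-chain}, the generators of the natural copy of $H_\alpha$ inside $(1/4,1/2)$ have, in general, nonzero classes in $H_1(L;\Z)$. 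That is precisely why the paper does \emph{not} use the natural copy: it assumes $H_\alpha/H_\alpha'\cong\Z^k$ and replaces each $h_i$ by $h_i\cdot(a^ih_ia^{-i})^{-1}$ (Lemma~\ref{lem:comm-lemma}), a modified diagonal copy that is visibly trivial in homology, and the Neumann-type family $\NN$ of Lemma~\ref{lem:left orderable} is chosen exactly so that this rank hypothesis holds.

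Your fallback does not repair this. Rubin's Theorem would have to be applied to the actions of the commutator subgroups, whose local density is not established by Lemma~\ref{lem:loc-dense} (that lemma concerns the chain group itself); and even granting it, a conjugacy with $hG_\alpha'h^{-1}=G_\beta'$ only places $hG_\alpha h^{-1}$ inside the normalizer of $G_\beta'$ in $\Homeo^+(\bR)$ --- it does not produce an isomorphism $G_\alpha\cong G_\beta$, nor is that normalizer known to be countable, which is what your counting argument would then require. Separately, two of your preliminary steps assert exactly the statements the paper has to work for: the existence of uncountably many isomorphism types of two-generated left-orderable groups is the content of Lemmas~\ref{lem:left orderable} and~\ref{lem:uncountable}, and the realization of each $H_\alpha$ with a generator whose support has finitely many components --- which you need in order to invoke Theorem~\ref{thm:embed}(2) and reach $n=3$ --- is the content of Lemma~\ref{lem:fp free}, whose proof exploits the specific semidirect-product structure of the Neumann groups to make the generator $t$ cofinal in a left order; for an arbitrary two-generated left-orderable group no such realization is available.
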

 
In the case $n=3$, Thereom \ref{thm:uncountable} answers a question posed to the authors by J. McCammond. We deduce the following:

\begin{cor}\label{cor:simple}
For every one--manifold $M$, there exist uncountably many isomorphism types of countable simple subgroups of $\Homeo^+(M)$. These simple groups can be realized as the commutator subgroups of $3$--chain groups.
\end{cor}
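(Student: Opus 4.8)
The plan is to reduce everything to Theorem~\ref{thm:uncountable}, using the observation that a chain group acts with support contained in a single open interval, so that its action can be transplanted into any one--manifold through a chart.

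First I would invoke Theorem~\ref{thm:uncountable} with $n=3$ to produce an uncountable family $\GG$ of $3$--chain groups whose commutator subgroups $\{G'\mid G\in\GG\}$ are simple and pairwise non-isomorphic. Each such $G$ is finitely generated, hence countable, and therefore so is each subgroup $G'$. Thus it remains only to realize every $G'$ as a subgroup of $\Homeo^+(M)$: since the isomorphism type is an intrinsic invariant of the abstract group $G'$, any faithful such realization automatically yields uncountably many isomorphism types of countable simple subgroups of $\Homeo^+(M)$.

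The key structural fact is that, in the notation of Setting~\ref{set}, the union $\bigcup_i J_i$ is a single open interval $I_0:=(\partial^- J_1,\partial^+ J_n)$, because consecutive members of the chain overlap; consequently every $g\in G=G_\FF$ fixes $\bR\setminus I_0$ pointwise, and by continuity satisfies $g(x)\to\partial^- J_1$ as $x\to\partial^- J_1$ from the right and $g(x)\to\partial^+ J_n$ as $x\to\partial^+ J_n$ from the left. Now let $M$ be any one--manifold, pick a connected component, and choose within it an open arc $U$ together with an orientation--preserving homeomorphism $h\co I_0\to U$. For each $g\in G$ define $\bar g$ by $\bar g|_U=h\circ (g|_{I_0})\circ h^{-1}$ and $\bar g=\Id$ on $M\setminus U$. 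The limiting behavior of $g$ at the two ends of $I_0$ guarantees that $\bar g$ extends continuously across the frontier of $U$ in $M$, so $\bar g\in\Homeo^+(M)$; it preserves orientation since it does so on the chart and is the identity elsewhere. The assignment $g\mapsto\bar g$ is a homomorphism, and it is injective because $\bar g=\Id$ forces $g|_{I_0}=\Id$ and hence $g=\Id$.

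Applying this embedding to the simple groups $G'$ for $G\in\GG$ produces the desired uncountable family of countable simple subgroups of $\Homeo^+(M)$, each realized as the commutator subgroup of a $3$--chain group. The only point requiring genuine care --- and the step I would treat as the main obstacle --- is the continuity of $\bar g$ at the frontier of the chart $U$; this is exactly where the observation that $\bigcup_i J_i$ is a single open interval, rather than a set accumulating on the ends in an uncontrolled way, does the work, and it is what lets one argument cover $\bR$, $S^1$, and the (half--)open and closed intervals simultaneously.
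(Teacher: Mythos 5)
Your proposal is correct, but it reaches the corollary by a somewhat different route than the paper, and the comparison is worth recording. The paper does not cite Theorem~\ref{thm:uncountable} as a black box; its proof re-runs the counting argument underlying that theorem: each group $N\in\NN$ (the uncountable Neumann-type family) embeds into the simple commutator subgroup of some minimal $3$--chain group by Lemma~\ref{lem:n-simple}, and since a countable group contains at most countably many finitely generated subgroups up to isomorphism, uncountably many isomorphism types of such commutator subgroups must occur. You instead quote the conclusion of Theorem~\ref{thm:uncountable} directly, including the clause $G'\not\cong H'$ for distinct members of the family, which is legitimate (the corollary is stated after, and as a consequence of, that theorem) and makes the uncountability part of your argument a one-liner. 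What you add, and what the paper leaves entirely implicit, is the transplantation step: the support of a chain group lies in a single open interval $I_0=\bigcup_i J_i$, so conjugating by an orientation-preserving homeomorphism $h\colon I_0\to U$ onto an open arc $U$ in a chart of $M$ and extending by the identity embeds $G$ into $\Homeo^+(M)$, with continuity across the frontier of $U$ guaranteed by the limiting behavior of an increasing bijection of an open interval at its two ends. This is precisely the point needed to justify the words ``for every one--manifold $M$'' in the statement, and it also shows that the image is again a $3$--chain group of homeomorphisms of $M$ whose commutator subgroup is the transplanted simple group, so the final clause of the corollary holds verbatim in $\Homeo^+(M)$. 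In short, both arguments rest on the same underlying construction (the Neumann family together with Lemma~\ref{lem:n-simple}); yours is more complete on the topological side, while the paper's is self-contained on the group-theoretic counting side.
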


Specializing to the interval, we will deduce the following:

\begin{cor}\label{cor:c1}
There exist uncountably many isomorphism types of countable simple subgroups of $\Homeo^+(I)$ which admit no nontrivial $C^1$ action on $I$.
\end{cor}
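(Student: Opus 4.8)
The plan is to combine the structural dichotomy of Theorem~\ref{thm:simple} with the non-smoothability statement for $6$--chain groups established later in the paper. First I would fix the uncountable family $\GG$ of minimal $6$--chain groups produced by that $C^1$--obstruction result, so that no member $G\in\GG$ acts faithfully by $C^1$ orientation--preserving diffeomorphisms of $I$, and so that $G'\not\cong H'$ for distinct $G,H\in\GG$; this last property is precisely what Theorem~\ref{thm:uncountable} supplies for $n=6$, and the construction is arranged to retain it. Since every member acts minimally, Theorem~\ref{thm:simple}(i) shows that each commutator subgroup $G'$ is simple. Each $G'$ is a countable subgroup of $\Homeo^+(I)$, because the chain generators are supported on a bounded union of intervals, so after passing to a closed interval containing their supports the whole group lies in $\Homeo^+(I)$. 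This already yields uncountably many pairwise non--isomorphic countable simple subgroups of $\Homeo^+(I)$, and it remains only to rule out nontrivial $C^1$ actions.

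The key point I would exploit is that a simple group has no proper nontrivial quotients, so any nontrivial homomorphism $\rho\colon G'\to\Diff^1_+(I)$ is automatically injective; hence it suffices to show that $G'$ admits no \emph{faithful} $C^1$ action. Suppose, toward a contradiction, that such a $\rho$ existed. The smoothing obstruction underlying the $6$--chain result is witnessed by a fixed finitely generated subgroup $W$, and I would check that $W$ can be taken inside $G'$, since the obstructing elements are commutators of the chain generators supported in the interior of $\supp G$. Restricting $\rho$ to $W$ would then produce a faithful $C^1$ action of $W$ on $I$, contradicting the non--smoothability of $W$. This rules out nontrivial $C^1$ actions of $G'$ and completes the argument.

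The hard part will be the descent in the previous paragraph, namely transferring non--smoothability from the chain group $G$ to its simple commutator subgroup $G'$. This is delicate because $C^1$ smoothability passes neither to arbitrary subgroups nor to quotients; what rescues the argument is the interplay of two facts, that simplicity forces every nontrivial action to be faithful, and that the obstructing configuration $W$ can be realized inside $G'$ rather than only inside $G$. Verifying this containment---tracking $W$ through the construction and confirming that its generators are interior--supported commutators---is the step I expect to demand the most care.
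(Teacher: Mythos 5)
Your core mechanism---simplicity upgrades ``nontrivial'' to ``faithful,'' then restrict a hypothetical faithful $C^1$ action to a finitely generated non-smoothable subgroup $W\le G'$---is exactly the paper's. But the way you structure the argument has a genuine gap. You take as your starting point ``the non-smoothability statement for $6$--chain groups established later in the paper''; no such freestanding theorem exists there (the only numbered smoothability result is Proposition~\ref{prop:heisenberg}, which is the $C^2$/$3$--chain statement). The $6$--chain $C^1$ claim and Corollary~\ref{cor:c1} are proved \emph{simultaneously}, by a single construction, and this is forced: from the black-box statement ``the chain group $G$ admits no faithful $C^1$ action'' one cannot extract any finitely generated $W\le G'$ witnessing the obstruction, since (as you yourself note) non-smoothability passes neither to subgroups nor to quotients---a priori the obstruction could be invisible inside $G'$. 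So the descent cannot be done post hoc; the logical order must be reversed, fixing the witness first and building the chain group \emph{around} it so that the witness lies in $G'$ by construction. Concretely, the paper takes $W=N\times G_{LM}$, where $N\in\mathcal{N}$ is a Neumann-type group from Section~\ref{sec:uncountable} and $G_{LM}$ is the Lodha--Moore group, which admits no nonabelian (hence no faithful) $C^1$ action on $I$ by Theorem~\ref{thm:BLT}; this input---the actual identity of the obstruction---is what your proposal never supplies. Since $W$ is $5$--generated with $W/W'\cong\bZ^5$, and since Lemma~\ref{lem:fp free} realizes $N$ so that one generator has connected support, Theorem~\ref{thm:fg-to-chain}(2) together with its final clause (i.e.\ Lemma~\ref{lem:comm-lemma}) embeds $W$ into the simple commutator subgroup of a minimal $6$--chain group. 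Your guessed verification (``the obstructing elements are commutators of the chain generators supported in the interior'') does not match this mechanism: the embedded generators are the elements $g_i\left(a^ig_ia^{-i}\right)^{-1}$ of Lemma~\ref{lem:comm-lemma}, and the containment in the commutator subgroup comes from the abelianization hypothesis, not from any property of the chain generators themselves.

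A second, smaller issue is the uncountability. You invoke Theorem~\ref{thm:uncountable} for $n=6$ to get $G'\not\cong H'$ and assert that ``the construction is arranged to retain'' the $C^1$ obstruction; that compatibility is precisely what would need proof. The paper sidesteps it with a counting argument: as $N$ ranges over $\mathcal{N}$, the groups $N\times G_{LM}$ realize uncountably many isomorphism types of $5$--generated groups, while any countable group contains only countably many isomorphism types of finitely generated subgroups; hence the simple groups $G'$ containing them fall into uncountably many isomorphism types. No pairwise non-isomorphism of commutator subgroups needs to be imported from Theorem~\ref{thm:uncountable}.
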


The simple subgroups in Corollary \ref{cor:simple} will necessarily be infinitely generated.
We remark that it is not difficult to establish Corollary \ref{cor:simple} for general $n$--manifolds, though we will not require such a statement here.

Theorem \ref{thm:uncountable} and its corollaries above show that chain  groups can be very diverse. However, chain groups exhibit a remarkable phenomenon called \emph{stabilization} 
(see Section \ref{sec:dyn} for precise definitions and discussion). 
We have the following results, which show that chain groups groups acquire certain \emph{stable isomorphism types}:

\begin{prop}\label{prop:stabilization-chain}
Let $G=G_{\mathcal{F}}$ be an $n$--chain group for $n\geq 2$. Then for all sufficiently large $N$, the group $G_N=\langle f^N\mid f\in\mathcal{F}\rangle$ is isomorphic to the Higman--Thompson group $F_n$.
\end{prop}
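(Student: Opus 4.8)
The plan is to reduce to a genuine chain group and then identify its abstract isomorphism type by matching the standard finite presentation of $F_n$, rather than by producing a topological conjugacy. By Theorem~\ref{thm:dynamical condition}, for all sufficiently large $N$ the group $G_N=\form{g_1,\ldots,g_n}$ with $g_i=f_i^N$ is again an $n$--chain group, so it suffices to analyze an arbitrary stabilized chain group with distinguished generators $g_1,\ldots,g_n$. One should resist trying to conjugate the action of $G_N$ to a fixed standard action of $F_n$: if the original action of $G$ has a wandering interval (as is permitted, cf.\ Theorem~\ref{thm:simple}(ii) and Proposition~\ref{prop:blow-up}), then the same interval is wandering for the subgroup $G_N\le G$, whereas the standard realization of $F_n$ as a chain group is minimal; thus the two actions are in general not conjugate, and the isomorphism must be built at the level of abstract groups.

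First I would record the relations forced on $g_1,\ldots,g_n$ purely by the chain geometry. Since $J_i\cap J_j=\varnothing$ when $|i-j|\ge 2$, the generators $g_i$ and $g_j$ have disjoint support and commute. For each consecutive pair, $\form{g_i,g_{i+1}}\cong F$ because $G_N$ is a chain group. Here lies a genuine subtlety: $g_i$ and $g_{i+1}$ have \emph{transverse} supports, neither contained in the other, so they are not the usual standard generators of $F$ but a fixed ``chain pair'', and I would first pin down the presentation of $F$ with respect to such a pair. Assembling the commuting relations, the pairwise $F$--relations, and the compatibility of the two $F$--structures sharing the generator $g_i$ across the overlaps $J_{i-1}\cap J_i$ and $J_i\cap J_{i+1}$, one obtains a candidate presentation together with a homomorphism from $F_n$ onto $G_N$ carrying standard generators to the $g_i$. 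Checking that this candidate presentation is equivalent to the standard finite presentation of the Higman--Thompson group $F_n$ is bookkeeping, best anchored by exhibiting the standard minimal realization of $F_n$ explicitly as an $n$--chain group, so that one has a concrete model in which to verify the relations.

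The crux is completeness: showing that for large $N$ the generators $g_i$ satisfy \emph{no} relation beyond those of $F_n$. This is exactly where the hypothesis on $N$ is used, and is the main obstacle. The mechanism I would exploit is that high powers put the generators in ``standard position'': $f_i^N$ carries the left end $\partial^- J_{i+1}$ of the overlap $J_i\cap J_{i+1}$ arbitrarily close to its right end $\partial^+ J_i$, producing a ping-pong configuration at every overlap (this is the dynamical heart of the stabilization phenomenon developed in Section~\ref{sec:dyn}, and is already the engine behind Theorem~\ref{thm:dynamical condition}). From such a configuration I would extract a normal form for words in the $g_i$, built by rewriting with the commuting and chain-pair $F$--relations until the word is carried to a standard form read off from the action on the associated $n$--ary subdivision structure, and prove that a normal-form word acts trivially only if it is empty. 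The difficulty is that this confluence must be controlled \emph{globally} along the whole chain, in particular at the triple overlaps governed by $g_{i-1},g_i,g_{i+1}$, where the two adjacent $F$--structures interact; establishing that large $N$ forces these interactions to be the standard ones, and hence that the normal form coincides with the tree-pair normal form for $F_n$, is the step I expect to require the most care. Since this identification is at the level of the abstract group and its presentation, it yields $G_N\cong F_n$ regardless of whether the particular action of $G_N$ is minimal, thereby also reconciling the non-minimal case flagged at the outset.
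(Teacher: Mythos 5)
Your architecture (build a surjection $F_n\twoheadrightarrow G_N$, then rule out further relations) stalls exactly at the step you yourself flag as the crux, and the dynamical input you propose to feed into that step is demonstrably too weak. Your configuration is purely pairwise: at each overlap, $f_i^N$ pushes $\partial^-J_{i+1}$ up to $\partial^+J_i$, which is precisely the hypothesis of Lemma~\ref{lem:dynamical condition}(1) applied to consecutive generators. That condition certifies only that $G_N$ is an $n$--chain group, and it cannot pin down the isomorphism type: the blow-up construction of Proposition~\ref{prop:blow-up} produces a $3$--chain group whose consecutive pairs are certified by this same pairwise condition, in which the word $g_1g_0g_2g_1(g_2g_1g_0)^{-1}$ equals the nontrivial element $h$, even though the corresponding word is trivial in the minimal model; its commutator subgroup is not simple, so the group is not $F_3$. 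Thus two chain groups can satisfy identical overlap-by-overlap ping-pong data while satisfying different relations, and no normal-form or confluence argument that only sees that data can succeed. Relatedly, your ``candidate presentation'' assembled from commutation and pairwise $F$--relations holds in \emph{every} $n$--chain group, and since these fall into uncountably many isomorphism types (Theorem~\ref{thm:uncountable}), such a presentation cannot present $F_n$. The hypothesis the paper actually uses is the global sweep condition of Lemma~\ref{lem:dynamical condition}(2), which couples all generators at once: $f_n\cdots f_1(\supp f_1\setminus\supp f_2)\cap(\supp f_n\setminus\supp f_{n-1})\ne\varnothing$; this holds for sufficiently high powers but fails for blow-ups.

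The second idea you are missing is the one that makes any completeness argument unnecessary. Granting the global condition, the paper defines explicit elements $h_i=f_{i+1}^{-1}\cdots f_{n-1}^{-1}f_n^{-1}$ for $0\le i\le n-1$, together with $h_{i+q(n-1)}=h_0^{-q}h_ih_0^q$, and verifies by the disjoint-support trick that they satisfy the relations $h_j^{h_i}=h_{j+n-1}$ ($0\le i<j$) of the infinite presentation of $F_n$; this yields a surjection $F_n\twoheadrightarrow G_N$ carrying the infinite generating set to these products (not the standard generators to the $g_i$, as you propose). Injectivity is then automatic: by Lemma~\ref{lem:brown1}, $F_n$ has trivial center and simple commutator subgroup, hence every proper quotient of $F_n$ is abelian, while $G_N$ is nonabelian. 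This one-line algebraic conclusion replaces your entire projected normal-form analysis, which in any case you do not carry out and which, for the reasons above, could not be carried out from the pairwise configuration alone.
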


The Higman--Thompson groups $\{F_n\}_{n\geq 2}$ are defined and discussed in Subsection \ref{ss:thompson} below. We remark that Proposition \ref{prop:stabilization-chain} was observed independently (and in a different context) by Bleak--Brin--Kassabov--Moore--Zaremsky~\cite{BBKMZ16}.

Finally, we show that chain groups are more or less uniquely realized as groups of homeomorphisms of the interval. We will say that a chain group $G<\Homeo^+(I)$ acts \emph{minimally} if for all $x\in (0,1)$, we have that the orbit $G. x$ is dense in $I$.

\begin{thm}\label{thm:uniqueness}
Let $\alpha,\beta\colon G\to\Homeo^+(I)$ be two actions of a chain group $G$ which are minimal. Then there exists a homeomorphism $h\colon I\to I$ such that for all $g\in G$ and $x\in I$, we have \[\alpha(g). x=h\circ\beta(g)\circ h^{-1}(x).\]
\end{thm}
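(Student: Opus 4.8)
The plan is to deduce the theorem from Rubin's reconstruction theorem, so that essentially all of the content lies in verifying its hypotheses for minimal chain group actions. First I would discard the endpoints and work on the open interval $(0,1)$: every element of $\Homeo^+(I)$ fixes $0$ and $1$, so these points carry no dynamical information, and a homeomorphism of $(0,1)$ intertwining the two actions will extend over $\{0,1\}$ to give the desired conjugacy on all of $I$. The space $(0,1)$ is Hausdorff, locally compact and perfect. Since $\alpha$ and $\beta$ are faithful (each realizes $G$ as a chain group of homeomorphisms), the rule $\beta(g)\mapsto\alpha(g)$ is a well-defined isomorphism $\Phi\colon\beta(G)\to\alpha(G)$ between subgroups of the homeomorphism group of $(0,1)$.

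Recall the form of Rubin's theorem to be used: if two groups act faithfully and \emph{locally densely} by homeomorphisms on perfect, locally compact, Hausdorff spaces $X_1$ and $X_2$, then every isomorphism between them is induced by a unique homeomorphism $X_1\to X_2$. Here an action of a group $\Gamma$ is \emph{locally dense} if for every $x$ and every open neighborhood $U$ of $x$ the closure of the orbit $\Gamma_U\cdot x$ has nonempty interior, where $\Gamma_U=\{\gamma\in\Gamma:\supp\gamma\subseteq U\}$ is the rigid stabilizer of $U$. Granting that both $\alpha$ and $\beta$ are locally dense, Rubin's theorem applied to $\Phi$ returns a homeomorphism $h\colon(0,1)\to(0,1)$ with $\alpha(g)=h\circ\beta(g)\circ h^{-1}$ for all $g$, and I would then extend $h$ across the endpoints (fixing or interchanging $0$ and $1$ according as $h$ preserves or reverses orientation) to obtain the homeomorphism $h\colon I\to I$ asserted by the theorem; the required identity at the endpoints is automatic, since every $\alpha(g)$ and $\beta(g)$ fixes them.

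The heart of the matter is verifying local density for an arbitrary minimal chain group action on $(0,1)$. I would establish the sharper assertion that for every $x\in(0,1)$ and every open neighborhood $U$ of $x$ there is an open interval $V$ with $x\in V\subseteq U$ whose rigid stabilizer $G_V$ acts minimally on $V$; local density is then immediate, since $\overline{G_U\cdot x}\supseteq\overline{G_V\cdot x}=\overline{V}$ has nonempty interior. To build $V$ I would combine minimality with the internal structure of chain groups. By minimality there is $\gamma\in G$ carrying $x$ to an interior point of the interval supporting some adjacent pair $\form{f_i,f_{i+1}}\cong F$. Now I use the standard fact that, inside this copy of Thompson's group $F$, the rigid stabilizer of any open subinterval $W$ acts minimally on $W$. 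Choosing such a $W$ around $\gamma(x)$, small enough that $\gamma^{-1}(W)\subseteq U$, and setting $V=\gamma^{-1}(W)$, the conjugate $\gamma^{-1}\,\big(\mathrm{rist}_{\form{f_i,f_{i+1}}}(W)\big)\,\gamma$ consists of elements supported in $V$ and acts minimally on $V$; hence so does $G_V$, which contains it.

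The genuine obstacle is precisely this verification of local density: it requires a careful account of the self-similarity of the embedded copies of $F$—so that rigid stabilizers of subintervals are seen to act minimally—together with the bookkeeping needed to transport these local minimal actions by the group elements furnished by minimality. Once local density is in hand for both $\alpha$ and $\beta$, the remainder is the formal application of Rubin's theorem and the harmless extension of $h$ over the endpoints described above.
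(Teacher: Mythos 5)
Your overall architecture---pass to $(0,1)$, verify local density, invoke Rubin's theorem---is the same as the paper's (Lemma~\ref{lem:loc-dense} together with Theorem~\ref{thm:rubin}), and your conjugation-transport step is sound. The gap is the ``standard fact'' on which everything rests: that inside the copy $\form{f_i,f_{i+1}}\cong F$, the rigid stabilizer of every open subinterval $W$ of its support acts minimally on $W$. This is a statement about the \emph{action} of $\form{f_i,f_{i+1}}$ on $J_i\cup J_{i+1}$, not about the abstract group $F$, and nothing in the hypotheses guarantees that this action is conjugate to the standard one. The definition of a chain group only requires $\form{f_i,f_{i+1}}$ to be \emph{abstractly} isomorphic to $F$; its action on its support could a priori be a Denjoy-type (blown-up) action with an exceptional minimal set, and for such an action your fact fails badly: the subgroup does not even act minimally on its own support (the case $W=J_i\cup J_{i+1}$), and the rigid stabilizer of an interval $W$ inside a wandering gap is trivial. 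Minimality of the \emph{full} group $G$ on $(0,1)$ does not automatically descend to minimality of the two-generator subgroups on their supports---that is a sub-question of the same nature as the theorem itself, and you give no argument for it. The paper's blow-up discussion (Proposition~\ref{prop:blow-up} and the remark following it) shows exactly that the abstract isomorphism type of a group of homeomorphisms places no constraint on whether its action is minimal or has wandering intervals, so the isomorphism $\form{f_i,f_{i+1}}\cong F$ cannot carry the weight you place on it.

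The paper's Lemma~\ref{lem:loc-dense} is engineered precisely to avoid any appeal to the internal dynamics of the embedded copies of $F$. It first upgrades minimality of $G$ on $(0,1)$ to minimality of the diagonal action on ordered pairs $\{(x,y)\mid x<y\}$, using conjugates of the last generator $f_k$ (whose support is of the form $(a,1)$): such conjugates can be arranged to fix $x$ while pushing $y$ arbitrarily close to a prescribed target. With pair-minimality in hand, one takes an element $g\in G$ whose support is compactly contained in $(0,1)$ (such elements exist because $G$ is a chain group) and conjugates it so that its support contains $x$, lies in $U$, and has a boundary point within $\epsilon$ of a given $y\in U$; since the orbit of $x$ under this single conjugate accumulates at both endpoints of its support, the $G_U$-orbit of $x$ is dense in the component of $U$ containing $x$, which is local density. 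To repair your proof you would need either to prove that in a minimal chain group each $\form{f_i,f_{i+1}}$ acts minimally (indeed, with minimally acting rigid stabilizers) on $J_i\cup J_{i+1}$---a claim that does not follow from anything you cite and may be false in general---or to replace your base case by an argument of the paper's kind that works with the whole group rather than with the sub-$F$'s.
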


It is essential to note that Theorem~\ref{thm:uniqueness} applies only to chain group actions \emph{as chain groups}, and not as abstract groups. Two abstract minimal actions of a chain group are topologically conjugate provided they are both \emph{locally dense}. See Section~\ref{sec:uniqueness}. Theorem~\ref{thm:uniqueness} was already known in several contexts, such as for Thompson's group itself and more generally for the Higman--Thompson groups (cf.~\cite{GhysSergiescu,BBKMZ16}).

\subsection{Notes and references}

\subsubsection{Motivations}
The original motivation for studying chain groups comes from the first and second authors' joint work with Baik~\cite{BKK16} (cf.~\cite{KK2017}) which studied right-angled Artin subgroups of $\Diff^2(S^1)$. If $G=G_{\mathcal{F}}<\Homeo^+(I)$ is a prechain group, then one can form a graph whose vertices are elements of $\mathcal{F}$, and where an edge is drawn between two vertices if the supports are disjoint. The graph one obtains this way is an \emph{anti--path}, i.e. a graph whose complement is a path. In light of general stabilization results for subgroups generated by ``sufficiently high powers" of homeomorphisms or of group elements or of mapping classes as occur in right-angled Artin group theory and mapping class group theory (see for instance~\cite{CLM2012,Koberda2012,KK2013,KK2013b,KK2015}), one might guess that after replacing elements of $\mathcal{F}$ by sufficiently high powers, one would obtain the right-angled Artin group on the corresponding anti--path. It is the somewhat surprising stabilization of a $2$--chain group at the isomorphism type of Thompson's group $F$ instead of the free group on two generators that provides the key for systematically approaching the study of chain groups. Finally, Proposition \ref{prop:stabilization-chain}  gives the correct analogous general stabilization of isomorphism type result for general chain groups.

The algebraic structure of $2$--prechain groups, which by definition stabilize to become isomorphic to Thompson's group $F$, have been investigated by the second and third author in~\cite{KL2017}.

\subsubsection{Uncountable families of countable simple groups}

Within any natural class of countable (and especially finitely generated) groups, it is typical to encounter a countable infinity of isomorphism types, but uncountable infinities of isomorphism types often either cannot be exhibited, or finding them is somewhat nontrivial. Within the class of finitely generated groups, the Neumann groups provide some of the first examples of an uncountable family of distinct isomorphism types of two--generated groups. The reader may find a description of the Neumann groups in Section \ref{sec:uncountable}, where we use these groups allow us to find uncountably many isomorphism classes of chain groups.

Uncountable families of simple groups, or more generally groups with a specified property, are also difficult to exhibit in general. Uncountably many distinct isomorphism types of finitely generated simple groups can be produced as a consequence of variations on~\cite{BaumslagRoseblade}. For other related results on uncountable families of countable groups satisfying various prescribed properties, the reader is directed to~\cite{MonodShalom,OlshanskiiOsin,BCGS14,Mikaelian,Pyber,Brookes,HigmanScott,Leary2015}, for instance. To the authors' knowledge, the present work produces the first examples of uncountable familes of distinct isomorphism types of countable subgroups of $\Homeo^+(I)$, as well as an uncountable families of distinct isomorphism types of countable simple subgroups of $\Homeo^+(I)$ (and more generally of $\Homeo^+(M)$ for an arbitrary manifold $M$).

\section{Preliminaries}\label{s:prelim}
We gather some well--known facts here which will be useful to us in the sequel.
\emph{Throughout this paper,
we fix the notations
\[
a(x)=x+1\quad\text{and}\quad
b(x)=
\begin{cases}
 x&\text{ if }x\leq 0,\\
 2x&\text{ if }0<x< 1,\\
  x+1&\text{ if }1\leq x\\
\end{cases}
\]
}
and let $F = \form{a,b}\le\Homeo^+(\bR)$ be the \emph{standard} copy of Thompson's group $F$.

\subsection{Thompson's group and Higman--Thompson group}\label{ss:thompson}
The canonical references for this section are the classical Cannon--Floyd--Parry notes~\cite{CFP96} and the contemporary treatment in Burillo's in--progress book~\cite{BurilloBook}.  

The group $F$ was originally defined as the group of piecewise linear orientation preserving homeomorphisms of the unit interval with dyadic breakpoints, and where all slopes are powers of two.
With the maps $A,B\in\mathrm{PL}^+(I)$ illustrated in Figure~\ref{fig:gens}, we can write a presentation
 \[F=\form{A,B \mid [AB^{-1}, A^{-k}BA^k]\text{ for }k=1,2}.\]
 
Using $h\in\mathrm{PL}^+(\bR)$ in Figure~\ref{fig:gens} (c), the maps $a,b$ introduced above
 satisfy  \[a=hA^{-1}h^{-1},\quad b = hB^{-1}h^{-1},\]
and hence,
\[ F=\form{a,b\mid [b^{-1}a,a^kba^{-k}]\text{ for }k=1,2}
=\form{u,b|[u,(bu)^k b(bu)^{-k}]\text{ for }k=1,2}.\]

Thompson's group $F$ has a very diverse array of subgroups. An example of a finitely generated subgroup of $F$ which is not finitely presented is the lamplighter group 
$\Z\wr\Z$. We record this fact for use later in the paper:

\begin{lem}\label{lem:lamplighter}
Thompson's group $F$ contains a copy of the lamplighter group $\Z\wr\Z$.
\end{lem}
\bp
Pick a compactly supported nontrivial element $f\in F$. 
Then for all $N$ sufficiently large, it is easy to check that $\langle a^N,f\rangle\cong\Z\wr\Z$.\ep

\begin{figure}[b!]
  \tikzstyle {a}=[postaction=decorate,decoration={%
    markings,%
    mark=at position .65 with {\arrow{stealth};}}]
\subfloat[(a) $A$]{\begin{tikzpicture}[scale=.75]
\draw (0,0) -- (0,.5) -- (4,.5) -- (4,0) --cycle;
\draw [a] (2,.5) -- (1,0);
\draw [a] (3,.5) -- (2,0);
\draw [right] (0,.5) node [above] {\tiny $0$};
\draw [right] (4,.5) node [above] {\tiny $1$};
\draw  (2,0.5) node  [above] {\tiny $\frac12$} (3,.5) node [above] {\tiny $\frac34$};
\draw [below]  (1,0) node   {\tiny $\frac14$} (2,0) node {\tiny $\frac12$};
\draw [below]  (4,0) node   {\tiny $1$};
\draw [below]  (0,0) node   {\tiny $0$};
\end{tikzpicture}}
\quad\quad
\subfloat[(b) $B$]{\begin{tikzpicture}[scale=.75]
\draw (0,0) -- (0,.5) -- (4,.5) -- (4,0) --cycle;
\draw [a] (2,.5) -- (2,0);
\draw [a]  (3,.5)--(2.5,0) ;
\draw [a]  (3.5,.5)--(3,0) ;
\draw [right] (0,.5) node [above] {\tiny $0$};
\draw [right] (4,.5) node [above] {\tiny $1$};
\draw [below]  (0,0) node   {\tiny $0$};
\draw [below]  (4,0) node   {\tiny $1$};
\draw  [below] (2,0) node {\tiny $\frac12$} (3,0) node  {\tiny $\frac34$} (2.5,0) node {\tiny $\frac58$};
\draw [above]  (2,.5) node {\tiny $\frac12$} (3,.5) node   {\tiny $\frac34$} (3.5,.5) node   {\tiny $\frac78$};
\end{tikzpicture}}
\quad\quad
\subfloat[(c) $h$]{\begin{tikzpicture}[scale=.75]
\draw (0,0) -- (0,.5) -- (4,.5) -- (4,0) --cycle;
\foreach \i in {1,3,3.5} \draw [a] (\i,.5)--(\i,0);
\draw [right] (0,.5) node [above] {\tiny $0$};
\draw [right] (4,.5) node [above] {\tiny $1$};
\draw [below]  (0,0) node   {\tiny $-\infty$};
\draw [below]  (4,0) node   {\tiny $\infty$};
\draw  [below] (1,0) node {\tiny $-1$};
\draw  [below] (3,0) node  {\tiny $1$} (3.5,0) node  {\tiny $2$} ;
\draw [above]  
(1,.5) node {\tiny $\frac14$} (3,.5) node   {\tiny $\frac34$} (3.5,.5) node   {\tiny $\frac78$};
\end{tikzpicture}}
\caption{Elements $A,B,h$ of Thompson's group $F$.}
\label{fig:gens}
\end{figure}
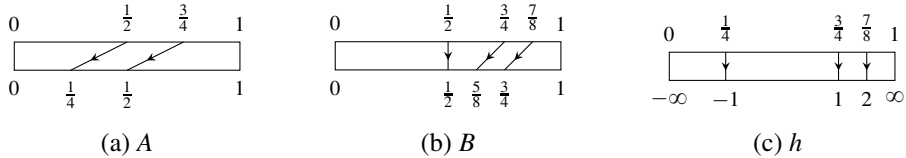

The Higman--Thompson groups $\{F_n\}_{n\geq 2}$ are certain generalizations of Thompson's group $F$, which first appear in~\cite{HigmanBook} (see also~\cite{Brown1985}, where the notation $F_{n,1}$ is used for $F_n$). 
Their relationship to Thompson's group $F$ is most evident from the following well--known presentation, which the reader may find in~\cite{BurilloBook} for instance: \[F\cong\langle \{g_i\}_{i\in\Z_{\geq 0}}\mid g_j^{g_i}=g_{j+1}\textrm{ if $0\le i<j$}\rangle.\]

The group $F_n$ can be defined by the following infinite presentation: \[F_n=\langle \{g_i\}_{i\in\Z_{\geq 0}}\mid g_j^{g_i}=g_{j+n-1}\textrm{ if $0\le i<j$}\rangle.\] Observe that $F_2$ is just Thompson's group $F$. The groups $\{F_n\}_{n\geq 2}$ have many remarkable algebraic features, the essential ones of which we record here:

\begin{lem}\label{lem:brown1}
For each $n\ge2$, the group $F_n$ has a trivial center and a simple commutator subgroup.
\end{lem}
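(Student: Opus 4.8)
The plan is to prove that $F_n$ has trivial center and simple commutator subgroup directly from the infinite presentation
\[
F_n=\langle \{g_i\}_{i\in\Z_{\geq 0}}\mid g_j^{g_i}=g_{j+n-1}\textrm{ if $0\le i<j$}\rangle,
\]
attributing the statement to Higman and Brown and recalling the standard facts rather than reproving the deep structural results from scratch. The cleanest route is to cite the combinatorial group theory of $F_n$ developed in \cite{HigmanBook,Brown1985}, where one realizes $F_n$ as a group of tree-pair diagrams (or piecewise-linear homeomorphisms of $I$ with breakpoints in $\Z[1/n]$ and slopes powers of $n$), and from this picture both assertions follow from well-known arguments. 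I would state at the outset that I am recording these as known facts for later use, so the goal is an efficient exposition, not new mathematics.

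For the \emph{triviality of the center}, the key observation is that $F_n$ acts faithfully on the interval (or on the Cantor-like set of ends of the $n$-ary tree) and that this action is highly transitive on an appropriate set of ``cones'' or dyadic-type subintervals. Concretely, I would argue that if $z$ were a nontrivial central element, then $z$ would commute with elements supported on arbitrarily small subintervals; using the transitivity of the action one can conjugate such a supported element to have disjoint support from a given one, forcing $z$ to fix a dense collection of points and hence act trivially, contradicting faithfulness. Equivalently, one can read this off the presentation: the relations $g_j^{g_i}=g_{j+n-1}$ show the generators are shifted by conjugation, and a central element would have to be fixed under all these shifts, which combined with the normal form for $F_n$ leaves only the identity.

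For the \emph{simplicity of the commutator subgroup} $F_n'$, the standard approach is the one used for Thompson's group $F$ itself and generalized by Brown: first identify the abelianization, showing $H_1(F_n,\Z)\cong\Z^n$ (the generators $g_i$ all become identified in a controlled way modulo the relations), so that $F_n'$ is the kernel of the abelianization map and consists of elements acting trivially near the endpoints of $I$. One then shows $F_n'$ is generated by elements of compact support in the open interval, and invokes a commutator/double-commutator argument: any normal subgroup of $F_n$ that is nontrivial contains an element with support in some subinterval, and using the transitivity of the action together with commutator identities (the classical ``commutator trick'' of Higman, showing that the group of compactly supported elements is simple because any two nontrivial such elements are conjugate after taking commutators) one produces all of $F_n'$. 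I would cite \cite[Theorem~C]{Brown1985} or the corresponding result in \cite{HigmanBook} for the simplicity statement rather than reconstructing the full double-commutator argument.

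The main obstacle I anticipate is not conceptual but one of bookkeeping and sourcing: the simplicity of $F_n'$ rests on Higman's theorem that the relevant transitivity and commutator conditions hold, and carefully matching the infinite presentation used here to the tree-diagram or PL realization in the literature requires a little care to ensure the indexing conventions agree (in particular that $F_2$ really is the $F$ of the previous subsection). Since the paper only needs these as recorded background facts, the right move is to keep the proof short, point to the precise statements in \cite{HigmanBook,Brown1985}, and perhaps include a one-line sketch of the center argument via faithfulness of the action, deferring the genuinely technical simplicity argument to the cited references.
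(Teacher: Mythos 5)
Your proposal takes essentially the same approach as the paper: the paper records this lemma in its preliminaries as a known fact about the Higman--Thompson groups, offering no proof and relying on the references \cite{HigmanBook} and \cite{Brown1985}, which is exactly the route you advocate. Your sketches of the standard arguments (trivial center via faithfulness and disjoint-support conjugation, simplicity of $F_n'$ via the Higman/Brown commutator machinery) are correct in outline but unnecessary for the paper's purposes, where a citation suffices.
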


It follows that $F_n''=F_n'$ and that every proper quotient of $F_n$ is abelian.

\subsection{Higman's Theorem}\label{subsec:higman}

The main technical tool we will require to prove Theorem \ref{thm:simple} is Higman's Theorem~\cite{Higman}. To properly state this result, consider a group $H$ acting on a set $X$. For $g\in H$, we will retain standard notation and write \[\supp g=\{x\in X\mid g(x)\neq x\}.\]

\begin{thm}[Higman]\label{thm:higman}
Let $H$ be a group acting faithfully on a set $X$, and suppose that for all triples $r,s,t\in H\setminus \{1\}$ there is an element $u\in H$ such that \[(\supp r\cup\supp s)\cap (t^u(\supp r\cup\supp s))=\varnothing.\] Then the commutator subgroup $H'=[H,H]$ is simple.
\end{thm}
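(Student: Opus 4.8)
The plan is to argue directly: let $N\trianglelefteq H'$ be a nontrivial normal subgroup, fix some $1\neq t\in N$, and show that $N$ contains every commutator $[f,g]$ with $f,g\in H$. Since such commutators generate $H'$, this forces $N=H'$, which is exactly the simplicity of $H'$. The only features I would use are that $t$ is a fixed nonidentity element of $N$ and that $N$ is invariant under conjugation by $H'$.

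The engine is the displacement hypothesis together with two elementary facts: homeomorphisms (or permutations) with disjoint supports commute, and $\supp(g^{-1}fg)=g^{-1}(\supp f)$. Given $f,g\in H$, set $A=\supp f\cup\supp g$ and apply the hypothesis to the triple $(f,g,t)$ to produce $u\in H$ with $A\cap t^u(A)=\varnothing$; write $w=t^u$, so the permutation $w$ carries $A$ entirely off itself. Because $\supp f,\supp g\subseteq A$ while $w(A)\cap A=\varnothing$, the $w$--conjugates $wf^{\pm1}w^{-1}$ and $wg^{\pm1}w^{-1}$ are supported in $w(A)$ and hence commute with $f$ and $g$. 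The key algebraic observation I would record is the identity
\[ [f,g]=\bigl[\,[f,w],\,g\,\bigr], \]
valid in this situation: writing $[f,w]=f\cdot b$ with $b=wf^{-1}w^{-1}$, the factor $b$ is supported in $w(A)$ and therefore commutes with $g$, so expanding $[fb,g]$ and using $[b,g]=1$ collapses it to $[f,g]$. Thus every commutator of generators of $H'$ is itself a commutator in which one entry, $[f,w]=(fwf^{-1})\cdot w^{-1}$, is a product of a conjugate of $t$ and a conjugate of $t^{-1}$.

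The remaining --- and genuinely hard --- step is to certify that this expression lies in $N$, and this is the step I expect to be the main obstacle. The difficulty is that the hypothesis only furnishes a conjugator $u\in H$, whereas $N$ is known to be normal merely in $H'$, so a bare conjugate $t^u$ need not lie in $N$. The heart of the argument is to reorganize the displacement so that the elements by which $t$ gets conjugated are themselves commutators, hence lie in $H'$; one then exploits that conjugating $t$ by any element of $H'$ keeps it inside $N$ and that $N$ is closed under commutation with $H'$ on either side. Concretely, I would aim to rewrite $[f,g]$ as a product of $H'$--conjugates of $t^{\pm1}$, absorbing the leftover piece that a single displacement leaves behind on $w(A)$ by a further application of the hypothesis to the displaced set; proving that this bookkeeping terminates \emph{inside $N$}, rather than merely inside the normal closure of $t$ in $H$, is the crux. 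Once that is in place, $N$ contains all commutators, so $N=H'$ and $H'$ is simple.
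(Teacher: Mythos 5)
Your first half is correct, and it is the standard engine of Higman's argument: with $w=t^u$ displacing $A=\supp f\cup\supp g$ off itself, the factor $b=wf^{-1}w^{-1}$ has support in $w(A)$ and commutes with $g$, giving the identity $[f,g]=[[f,w],g]$, so that $[f,g]$ is a product of $H$--conjugates of $t^{\pm1}$. If $N$ were normal in $H$ this would finish: then $w=t^u\in N$, hence $[f,w]=(fwf^{-1})w^{-1}\in N$ and $[[f,w],g]=[f,w]\cdot g[f,w]^{-1}g^{-1}\in N$. But, as you yourself flag, that only proves the weaker statement that every nontrivial subgroup \emph{normal in $H$} contains $H'$. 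The actual assertion --- simplicity of $H'$ --- concerns $N$ normal merely in $H'$, and your proposal stops exactly there: the ``bookkeeping'' you describe (absorbing leftover pieces by further displacements) is never carried out, and as written your expression only certifies that $[f,g]$ lies in the normal closure of $t$ in $H$, not in $N$. So there is a genuine gap, and it is the main content of the theorem. (For calibration: the paper under review does not prove this statement at all; it quotes it from Higman~\cite{Higman} and only proves the derived criterion of Lemma~\ref{lem:higman}, so the comparison here is with Higman's own argument.)

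The way the gap is actually closed is structural rather than by rewriting $[f,g]$ term by term. One shows that the displacement hypothesis \emph{descends to $H'$}: for all $r,s,t\in H'\setminus\{1\}$ there is a conjugator $u$ that can be taken in $H'$ with $t^u(\supp r\cup\supp s)\cap(\supp r\cup\supp s)=\varnothing$. Granting this, your own first half, applied to the group $H'$ acting on $X$, shows that every nontrivial $N\trianglelefteq H'$ contains $H''$. It then remains to check $H''=H'$: first, $H'$ is nonabelian --- if $t$ and $t^u$ commuted then $t^u$ would preserve $\supp t$ setwise, contradicting $t^u(\supp t)\cap\supp t=\varnothing$, and $t^u\in H'$ whenever $t\in H'$ since $H'\trianglelefteq H$ --- so $H''\neq 1$; second, $H''$ is characteristic in $H'$ and hence normal in $H$, so your first half applied to $H$ gives $H''\supseteq H'$. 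Thus the two missing ingredients are (a) the descent of the hypothesis to $H'$, with conjugators in $H'$, and (b) the identity $H''=H'$. Ingredient (b) is cheap once (a) is available, but (a) is precisely the ``crux'' you name without resolving: your plan of conjugating by commutators is the right target, but no mechanism is supplied for producing a commutator that displaces $A$, and nothing in your sketch guarantees the proposed recursion terminates. Until (a) is proved, what you have is a correct proof of a strictly weaker theorem.
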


The following definitions will be useful when  Higman's Theorem is applied.

\bd\label{defn:cpt-trans}
Let $X$ be a topological space, and let $H\le\Homeo(X)$.
\be
\item
We say $H$ acts \emph{CO-transitively} (or, \emph{compact--open-transitively}) if
for each proper compact subset $A\sse X$ and for each nonempty open subset $B\sse X$,
there is $u\in H$ such that $u(A)\sse B$.
\item
We say $H$ acts \emph{locally CO-transitively} if
for each proper compact subset $A\sse X$ there exists some point $x\in X$
such that for each open neighborhood $B$ of $x$
there is $u\in H$ such that $u(A)\sse B$.
\ee
\ed

We shall use the following variation of Theorem \ref{thm:higman}; see also ~\cite{Epstein1970,DS1991Glasgow,BieriStrebel}.

\begin{lem}\label{lem:higman}
Let $X$ be a non-compact Hausdorff space, and let $\Homeo_c(X)$ denote the group of compactly supported homeomorphisms.
If $H\le\Homeo_c(X)$ is CO-transitive,
then $H'$ is simple.
\end{lem}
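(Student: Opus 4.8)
The plan is to deduce this as a direct application of Higman's Theorem (Theorem~\ref{thm:higman}) to $H$. Since $H$ is a subgroup of $\Homeo(X)$, its tautological action on $X$ is faithful, so the only thing left to verify is the combinatorial hypothesis of that theorem: for every triple of nontrivial elements $r,s,t\in H$ there should be a conjugator $u\in H$ with
\[
(\supp r\cup\supp s)\cap t^u(\supp r\cup\supp s)=\varnothing.
\]
Writing $A=\supp r\cup\supp s$, the whole task reduces to pushing $A$ to a location where the conjugate $t^u$ displaces it off itself, and this is precisely what CO-transitivity is designed to arrange.

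The first step is to see that $A$ is an admissible source set for CO-transitivity. Because $H\le\Homeo_c(X)$, each of $\supp r$ and $\supp s$ has compact closure, so $\overline A$ is compact; and because $X$ is non-compact, $\overline A$ is a \emph{proper} compact subset of $X$. The second step is to manufacture a suitable target open set out of $t$. Since $t\ne 1$, there is a point $p$ with $t(p)\ne p$; using the Hausdorff property together with continuity of $t$, I would take disjoint open neighbourhoods of $p$ and $t(p)$ and intersect the first with the $t$-preimage of the second to produce a nonempty open set $W\ni p$ with $W\cap t(W)=\varnothing$.

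These two steps then combine cleanly. Applying CO-transitivity to the proper compact set $\overline A$ and the nonempty open set $W$ yields $u\in H$ with $u(\overline A)\sse W$, hence $u(A)\sse W$. Since $u(A)\sse W$ and $t(u(A))\sse t(W)$ while $W\cap t(W)=\varnothing$, the sets $u(A)$ and $t(u(A))$ are disjoint; applying the homeomorphism $u^{-1}$, which preserves disjointness, gives $A\cap u^{-1}tu(A)=\varnothing$, i.e.\ $A\cap t^u(A)=\varnothing$ with the convention $t^u=u^{-1}tu$. As $r,s,t$ were arbitrary nontrivial elements, Higman's Theorem applies and $H'$ is simple.

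I do not expect a genuine obstacle here: essentially all the difficulty is packaged inside Theorem~\ref{thm:higman}, and the compact support of $t$ plays no role beyond guaranteeing $t\ne1$ furnishes the single displacement $W\cap t(W)=\varnothing$. The only delicate points are routine ones, namely extracting $W$ from the Hausdorff axiom alone, and keeping track of the conjugation convention so that one pushes $A$ by the correct one of $u$ or $u^{-1}$ (had $u(A)$ instead landed inside the fixed set of $t$, we would get $t^u(A)=A$, the exact opposite of what is needed). With the opposite convention $t^u=utu^{-1}$ one simply arranges $u^{-1}(\overline A)\sse W$ instead, which is equally available since $u$ ranges over all of $H$.
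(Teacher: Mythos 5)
Your proposal is correct and follows essentially the same route as the paper: both reduce the lemma to Higman's Theorem by taking the proper compact set containing $\supp r\cup\supp s$ (properness from non-compactness of $X$, compactness from $H\le\Homeo_c(X)$), producing a nonempty open set $B$ with $B\cap tB=\varnothing$ from the Hausdorff property, and applying CO-transitivity to move the compact set into $B$. Your write-up is merely more explicit than the paper's on two routine points --- the construction of $W$ from disjoint neighbourhoods of $p$ and $t(p)$, and the final conjugation by $u^{-1}$ to match the convention $t^u=u^{-1}tu$ --- which the paper leaves implicit.
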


\bp
In order to apply Higman's Theorem, we fix $r,s,t\in H\setminus\{1\}$.
Since $X$ is Hausdorff, we can choose a nonempty open set $B\sse X$ such that $B\cap tB=\varnothing$.
Choose a proper compact subset $A$ containing $\supp r\cup\supp s$.
By CO-transitivity, there is $u\in H$ such that $uA\sse B$.
From $uA\cap tuA=\varnothing$, we see
\[
u(\supp r\cup \supp s)\cap tu(\supp r\cup \supp s)=\varnothing,\]
as desired in the condition of Higman's Theorem.
\ep

Recall a group action on a topological space is \emph{minimal} if every orbit is dense. 

\begin{lem}\label{lem:cpt-trans}
A minimal, locally CO-transitive group action is CO-transitive.
\end{lem}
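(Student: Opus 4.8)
The plan is to unwind the two definitions and use minimality to upgrade the ``local'' transitivity at a single point into genuine transitivity with an arbitrary open target. Fix a proper compact subset $A\sse X$ and a nonempty open subset $B\sse X$; the goal is to produce $v\in H$ with $v(A)\sse B$.

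First I would apply local CO-transitivity to the compact set $A$, which supplies a point $x\in X$ such that $A$ can be pushed into \emph{every} open neighborhood of $x$ by a suitable element of $H$. The crucial feature is that $x$ depends only on $A$ and not on the target $B$, leaving us free to use any neighborhood of $x$ as an intermediate target. Next I would exploit minimality to move $x$ into $B$: since the orbit $H.x$ is dense and $B$ is nonempty and open, there is some $g\in H$ with $g(x)\in B$. As $g$ is a homeomorphism and $B$ is open, the set $g^{-1}(B)$ is an open neighborhood of $x$. Applying local CO-transitivity a second time, now with $g^{-1}(B)$ as the target neighborhood, yields $u\in H$ with $u(A)\sse g^{-1}(B)$, whence $gu(A)\sse B$. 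Thus $v=gu$ is the required element.

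The argument is essentially formal, so I do not anticipate a serious obstacle; the only point demanding care is the ordering of the quantifiers. One must first extract the point $x$ from local CO-transitivity (it is pinned down as soon as $A$ is fixed), then call on minimality to find $g$ carrying $x$ into $B$, and only afterward return to local CO-transitivity to shrink $A$ into the neighborhood $g^{-1}(B)$ of $x$. Getting this sequence backwards — for instance trying to send $A$ directly toward $B$ — would demand exactly the conclusion we are trying to establish.
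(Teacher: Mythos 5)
Your proof is correct and follows essentially the same route as the paper's: extract the distinguished point $x$ from local CO-transitivity, use minimality (density of the orbit of $x$) to find $g$ with $g(x)\in B$, and then push $A$ into the neighborhood $g^{-1}(B)$ of $x$ before applying $g$. The paper's element $u=sr$ is exactly your $v=gu$, with its neighborhood $J$ playing the role of your $g^{-1}(B)$.
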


\bp
Let $G$ be the given group acting on a space $X$,
 let $A\sse X$ be proper and compact, and let $B\sse X$ be open.
Suppose $x$ is an accumulation point of the $G$--orbit of $A$, given by the hypothesis.
By minimality, we can find an open neighborhood $J$ of $x$ and an element $s\in G$ such that $s(J)\sse B$.  Then there is $r\in G$ such that $r(A)\sse J$
and $u=sr$ is a desired element.
\ep

\subsection{Left--orderability and subgroups of $\Homeo^+(\R)$}
A \emph{left--order} on a group $G$ is a total order $<$ on $G$ which is left invariant, i.e. for all $g,h,k\in G$ we have $h< k$ if and only if $gh < gk$.
A group is \emph{left--orderable}, if it can be equipped with some left--order.
While orderability is an algebraic property of a group, it has a very useful dynamical interpretation. The reader may find the following fact as Theorem 2.2.19 of \cite{Navas2011} (see also~\cite{DeroinNavasRivas}):

\begin{lem}\label{lem:dynamical realization}
Let $G$ be a countable group. We have that $G<\Homeo^+(\R)$ (equivalently $G<\Homeo^+(I)$) if and only if $G$ is left orderable.
\end{lem}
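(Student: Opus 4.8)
The final statement is Lemma~\ref{lem:dynamical realization}: a countable group $G$ embeds into $\Homeo^+(\R)$ if and only if it is left-orderable. This is a classical result, so my plan is to reconstruct the standard proof of both implications. Let me think through each direction carefully.

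**The easy direction ($\Homeo^+(\R) \Rightarrow$ left-orderable).** Suppose $G \le \Homeo^+(\R)$. I want to build a left-order on $G$. The idea is to pick a countable dense sequence $\{x_1, x_2, \dots\}$ in $\R$ (e.g. an enumeration of the rationals) and declare $g < h$ if, at the first index $i$ where $g(x_i) \ne h(x_i)$, we have $g(x_i) < h(x_i)$. I need to check this is well-defined: if $g \ne h$ as homeomorphisms, they must differ at some point, hence (by continuity and density) at some $x_i$, so the "first differing index" exists. Transitivity and totality follow from the lexicographic structure. Left-invariance is the key point: $gh < gk$ iff at the first index $i$ where $gh(x_i) \ne gk(x_i)$ we have $gh(x_i) < gk(x_i)$; since $g$ is an orientation-preserving homeomorphism (hence strictly increasing), $h(x_i) \ne k(x_i)$ exactly when $gh(x_i) \ne gk(x_i)$, and $g$ preserves the order, so the first differing index and the direction of the inequality are the same as for $h < k$. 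This gives left-invariance.

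**The hard direction (left-orderable $\Rightarrow \Homeo^+(\R)$).** This is the substantive part and the main obstacle. Given a left-order $<$ on the countable group $G$, I want a faithful action on $\R$ by orientation-preserving homeomorphisms. The standard construction is the \emph{dynamical realization}. First I enumerate $G = \{g_0 = 1, g_1, g_2, \dots\}$ and build an order-preserving injection $t \colon G \hookrightarrow \R$ by placing the elements one at a time: set $t(g_0) = 0$, and having placed $g_0, \dots, g_{k-1}$, place $g_k$ at a real number consistent with its order-position relative to the already-placed elements (if $g_k$ is below all of them, place it at $\min - 1$; above all, at $\max + 1$; between two consecutive ones, at their midpoint). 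The left-multiplication action of $G$ on itself, transported via $t$, gives an order-preserving action on the countable set $t(G) \subset \R$. The delicate step is extending this action from $t(G)$ to all of $\R$ continuously: one extends each $g$ affinely (or just monotonically) across the gaps in the closure of $t(G)$, and handles the boundary/limit behavior so that the result is a genuine homeomorphism of $\R$. Faithfulness holds because left-multiplication by a nontrivial $g$ moves the identity: $g \cdot 1 = g \ne 1$, so $t(g) \ne t(1)$ and the action is nontrivial on $t(G)$.

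**Assessment and structuring.** The main obstacle is the careful bookkeeping in the extension step of the dynamical realization: one must verify that the piecewise extension across the complement of $\overline{t(G)}$ yields continuous, strictly increasing, surjective maps whose composition law matches the group law, and that this is well-defined independently of the gaps. Since the excerpt explicitly cites this as Theorem~2.2.19 of \cite{Navas2011}, my actual write-up plan is to present the equivalence as standard, give the short lexicographic argument for the forward direction in full (it is clean and self-contained), and for the converse sketch the dynamical realization while referring to \cite{Navas2011} (and \cite{DeroinNavasRivas}) for the technical verification that the extension is a homeomorphism. This keeps the proof honest about where the real work lives while respecting that the result is being imported rather than reproven from scratch.
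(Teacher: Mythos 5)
Your proposal is correct and is consistent with the paper's treatment: the paper gives no proof of this lemma at all, importing it wholesale as Theorem 2.2.19 of Navas's book (the lemma is even labeled ``dynamical realization''), and your reconstruction---the lexicographic order built from a countable dense subset of $\R$ for the forward direction, and the dynamical realization of a left order, with the technical extension step deferred to the cited reference, for the converse---is exactly the standard argument behind that citation. The only point you gloss over is the parenthetical equivalence between embedding in $\Homeo^+(\R)$ and in $\Homeo^+(I)$, but this is immediate since $\Homeo^+(\R)\cong\Homeo^+\bigl((0,1)\bigr)$ extends by the identity to a subgroup of $\Homeo^+(I)$, and conversely restriction to the open interval embeds $\Homeo^+(I)$ into $\Homeo^+\bigl((0,1)\bigr)$.
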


Let $A$ and $B$ be groups equipped with left--orders.
A homomorphism $f\co A\to B$ is \emph{monotone increasing} 
if for every $g<h$ in $A$ we have $f(g)\le f(h)$. We have the following well-known fact, whose proof we omit:

\begin{lem}\label{lem:extension}
Suppose we have a short exact sequence of groups
\[\xymatrix{
1\ar[r]&
N\ar[r]^i&
G\ar[r]^p&
Q\ar[r]&
1
}.\]
If $N$ and $Q$ are equipped with left--orders, then there uniquely exists a left--order on $G$ such that $i$ and $p$ are monotone increasing.
\end{lem}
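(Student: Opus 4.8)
The plan is to work with positive cones rather than with the order relations directly. Recall that specifying a left--order on a group is equivalent to specifying its \emph{positive cone}, i.e.\ a sub--semigroup $P$ with the property that $G=P\sqcup\{1\}\sqcup P^{-1}$; one then sets $g<h$ precisely when $g^{-1}h\in P$. Let $P_N\sse N$ and $P_Q\sse Q$ denote the positive cones of the given left--orders, and use exactness to identify $N$ with $i(N)=\ker p\le G$. I will define a candidate cone on $G$ lexicographically, giving priority to the quotient:
\[
P=\{g\in G\mid p(g)\in P_Q\}\ \cup\ i(P_N).
\]
Note that the two sets on the right are disjoint, since the first consists of elements with $p(g)\ne1$ and the second of elements with $p(g)=1$.

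First I would check that $P$ is a positive cone. Trichotomy is verified by looking at $p(g)$: if $p(g)\in P_Q$ then $g\in P$ while $p(g^{-1})\in P_Q^{-1}$ puts $g^{-1}$ outside $P$; the case $p(g)\in P_Q^{-1}$ is symmetric; and if $p(g)=1$ then $g=i(x)$ and the trichotomy for $P_N$ decides among $g$, $1$, and $g^{-1}$. For multiplicative closure $P\cdot P\sse P$ one runs through the four cases according to whether each factor lies in $\ker p$: whenever at least one factor has positive image in $Q$ the product does too (because $P_Q$ is a cone and $p$ is a homomorphism), and when both factors lie in $i(P_N)$ the product lies in $i(P_N)$ because $P_N$ is a cone. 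I emphasize that no conjugation--invariance of $P$ is needed, since we seek a one--sided left--order, not a bi--order. The two monotonicity conditions are then immediate: $i$ is monotone because $x^{-1}y\in P_N$ forces $i(x)^{-1}i(y)\in i(P_N)\sse P$, and $p$ is monotone because $g^{-1}h\in P$ forces $p(g)^{-1}p(h)\in P_Q\cup\{1\}$, hence $p(g)\le p(h)$.

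For uniqueness, suppose $<'$ is any left--order on $G$ for which both $i$ and $p$ are monotone increasing, with positive cone $P'$; I would show $P'=P$ by deciding the sign of an arbitrary $g\ne1$. If $p(g)\ne1$, say $p(g)\in P_Q$, then applying monotonicity of $p$ in its contrapositive form (if $p(h_1)>p(h_2)$ then $h_1>'h_2$) to the pair $g,1$ forces $g\in P'$, and the case $p(g)\in P_Q^{-1}$ forces $g\notin P'$ symmetrically; so the sign of $g$ is pinned down exactly as in $P$. If $p(g)=1$, write $g=i(x)$ with $x\ne1$; here monotonicity of $i$ together with injectivity of $i$ shows $i(x)\in P'\iff x\in P_N$, again agreeing with $P$. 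Hence $P'=P$.

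The one genuinely delicate point, and the step I expect to require the most care, is that ``monotone increasing'' is defined with a \emph{non--strict} inequality. In the uniqueness argument this weak hypothesis must still yield strict sign information; the resolution is that $i$ is injective (so distinct elements of $N$ have distinct, hence $<'$--comparable images) and that $p$ distinguishes elements with different $Q$--images, which lets one upgrade the non--strict conclusions of monotonicity to the strict trichotomy needed to identify $P'$ with $P$. Everything else is a routine case analysis.
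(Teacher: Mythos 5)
Your proof is correct. The paper does not actually supply an argument here---it states the lemma as a well--known fact and explicitly omits the proof---so there is nothing to compare against line by line; your lexicographic positive--cone construction, with priority given to the quotient $Q$, is precisely the standard argument being invoked. The two delicate points are handled properly: you correctly observe that no conjugation--invariance of $i(P_N)$ is needed (that would only be required for a bi--order, since left--invariance of the order $g<h \iff g^{-1}h\in P$ is automatic for any cone), and in the uniqueness step you correctly upgrade the non--strict conclusions of monotonicity to strict ones using injectivity of $i$, totality of the order, and the fact that $g\neq 1$.
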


\section{Dynamical aspects}\label{sec:dyn}
\subsection{Stabilization}\label{ss:stab}
Let us note a dynamical condition that guarantees a prechain group to be a chain group.

\begin{lem}[cf. Section~\ref{sec:uniqueness}]\label{lem:dynamical condition}
The following are true.
\be
\item
Suppose the supports of $f,g\in\Homeo^+(\bR)$ are given by
\[\supp f=(x,z),\quad\supp g=(y,w)\]
for some $-\infty\le x<y<z<w\le\infty$.
If $gf(y)\ge z$,
then 
$\form{f,g}\cong F$.
\item
If a prechain group $G$ as in Setting~\ref{set} satisfies
\[f_n\cdots f_1(\supp f_1\setminus\supp f_2)\cap (\supp f_n\setminus\supp f_{n-1})\ne\varnothing,\]
then $G$ is isomorphic to the Higman--Thompson group $F_n$.
\ee
\end{lem}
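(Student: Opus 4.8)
The plan is to deduce both statements from the normal subgroup structure of the Higman--Thompson groups. Recall, from the remark following Lemma~\ref{lem:brown1}, that every proper quotient of $F_n$ is abelian; in particular $F_2=F$ has this property. Hence for part (1) it suffices to produce a surjection $F\twoheadrightarrow\langle f,g\rangle$ together with the observation that $\langle f,g\rangle$ is nonabelian: a nonabelian quotient of $F$ has trivial kernel, so is isomorphic to $F$. Nonabelianness is immediate, since $\supp f=(x,z)$ and $\supp g=(y,w)$ overlap in $(y,z)$ but are distinct, so $[f,g]\neq 1$. All of the work is therefore in constructing the surjection, and this is exactly where the hypothesis $gf(y)\ge z$ enters.

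To build the surjection I would first pass to the open interval $(x,w)$, on which $\langle f,g\rangle$ acts faithfully since $\supp f,\supp g\subseteq(x,w)$. Set $c:=gf$. A direct check shows $c$ has no fixed point in $(x,w)$: on $(x,z)$ the map $f$ already moves every point, and on $[z,w)$ we have $f=\mathrm{id}$ while $g$ moves every point. Thus $c$ is a fixed--point--free, right--pushing homeomorphism of $(x,w)\cong\bR$, hence topologically conjugate to the translation $t\mapsto t+1$; and since $g=cf^{-1}$ we have $\langle f,g\rangle=\langle f,c\rangle$. After conjugating so that $c$ becomes this translation, $f$ becomes a one--sided bump sharing an endpoint with the interval, so that (up to reflection) $(c,f)$ is precisely the standard configuration of the generators $a,b$ of $F$. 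Under this normalization the hypothesis $gf(y)\ge z$ becomes the statement that the overlap $(y,z)$ of the two supports spans at most one fundamental domain of $c$; this is the condition needed to force the supports of the conjugates $c^{k}fc^{-k}$ ($k=1,2$) to be disjoint from the relevant factors, so that the defining relators $[f^{-1}c,\,c^{k}fc^{-k}]$ of $F$ hold for $f,c$. This yields the surjection $F\twoheadrightarrow\langle f,c\rangle=\langle f,g\rangle$, and with nonabelianness we conclude $\langle f,g\rangle\cong F$.

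For part (2) I would run the same argument with $F_n$ in place of $F$, again using Lemma~\ref{lem:brown1} to the effect that every proper quotient of $F_n$ is abelian. Here the role of $c$ is played by the long element $c:=f_n\cdots f_1$, which one checks is fixed--point--free on $\supp G=(\partial^-J_1,\partial^+J_n)$: a point lying only in $J_i$ is fixed by $f_1,\dots,f_{i-1}$, strictly advanced by $f_i$, and then pushed weakly right by the remaining factors. Since $\langle f_1,\dots,f_n\rangle$ is visibly nonabelian, it remains to verify the Higman--Thompson relators $g_j^{g_i}=g_{j+n-1}$ (for $0\le i<j$) for a suitable generating set extracted from $f_1,\dots,f_n$ and $c$. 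The sweeping hypothesis, that $c(\supp f_1\setminus\supp f_2)$ meets $\supp f_n\setminus\supp f_{n-1}$, is what identifies the private part of $J_1$ with the private part of $J_n$ across the whole chain, making the combinatorics of the overlaps match those of the standard $F_n$ action and forcing the support--disjointness required in each relator.

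The main obstacle is precisely this verification of the defining relators from the dynamical hypothesis: one must show that the sweeping/overlap condition translates into the exact support--disjointness statements making each commutator (respectively each conjugation relation for $F_n$) hold identically, uniformly over all admissible choices of the homeomorphisms $f_i$. By contrast, the passage from ``nonabelian quotient'' to ``isomorphism'' is immediate from the fact that $F$ and $F_n$ have simple commutator subgroups and abelian proper quotients, so no separate injectivity or ping--pong argument is needed once the surjection is in hand.
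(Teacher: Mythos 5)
Your high-level reduction is the paper's own: produce a surjection from $F$ (resp.\ $F_n$) by verifying defining relators through disjointness of supports, then conclude from nonabelianness of $\form{f,g}$ and the fact that every proper quotient of $F_n$ is abelian. The genuine gap is in the relator verification for part (1): you have matched the generators the wrong way around, and the relators you propose to check are false. Writing $c=gf$, you claim that $[f^{-1}c,\,c^kfc^{-k}]$ ($k=1,2$) holds by support disjointness. But $\supp(f^{-1}c)=\supp(f^{-1}gf)=f^{-1}(\supp g)=(f^{-1}(y),w)$ and $\supp(c^kfc^{-k})=c^k(\supp f)=(x,c^k(z))$ (note $c$ fixes both $x$ and $w$), and both intervals contain $(y,z)$, so they are never disjoint; no use of the hypothesis can rescue this. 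Worse, the relators themselves fail: in the standard configuration $f=b^{-1}a$, $g=b$ (a legitimate instance of the lemma, with $x=-\infty$, $y=0$, $z=1$, $w=\infty$ and $gf(0)=a(0)=1\ge z$), the $k=1$ relator becomes $[a^{-1}ba,\,ab^{-1}]$; evaluating the two compositions at $0$ gives $a^{-1}ba(ab^{-1}(0))=b(2)-1=2$ but $ab^{-1}(a^{-1}ba(0))=b^{-1}(1)+1=3/2$, so this commutator is nontrivial in $F$. The slip originates in the claim that after normalizing $c$ to a translation, $(c,f)$ is ``up to reflection'' the standard pair $(a,b)$: reflection reverses the direction of translation, so that matching is unavailable. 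Since $c$ translates toward $w$, the generator whose support is a ray toward $w$ --- namely $g$, not $f$ --- must play the role of $b$, and then $f$ plays the role of $b^{-1}a$. The relators to verify are therefore $[f,\,(gf)^kg(gf)^{-k}]$, and these do follow from the hypothesis, since $(gf)^k\supp g=((gf)^k(y),w)\subseteq[z,w)$ is disjoint from $\supp f=(x,z)$ exactly when $gf(y)\ge z$; this corrected verification is the paper's proof.

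For part (2), your proposal contains no proof: you state that the relators of $F_n$ must be verified ``for a suitable generating set extracted from $f_1,\dots,f_n$ and $c$'' and explicitly defer this as ``the main obstacle,'' but that verification is the entire content of the statement, and it would in any case inherit the misidentification above. The paper carries it out concretely: it sets $h_i=f_{i+1}^{-1}\cdots f_{n-1}^{-1}f_n^{-1}$ for $0\le i\le n-1$, extends by $h_{i+q(n-1)}=h_0^{-q}h_ih_0^q$, observes that these elements generate $G$, and checks the relations $h_j^{h_i}=h_{j+n-1}$ for $0\le i<j$ by the same support-disjointness mechanism as in part (1), after which the abelian-proper-quotient argument finishes. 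Until the part-(1) generator matching is corrected and a part-(2) generating set with verified relations is actually exhibited, the proposal does not establish the lemma.
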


\bp
(1)
Note that $f$ and $g$ move points to the right.
For $k\ge1$ we have that
\[
\supp f\cap \supp (gf)^k g (gf)^{-k}
=\supp f\cap (gf)^k\supp  g =\varnothing.\]
So $F$ surjects onto $G=\form{f,g}$. Since every proper quotient of $F$ is abelian and $G$ is nonabelian, we see $F\cong G$.

(2)
For $0\le i\le n-1$ and for $q\ge0$, 
we define 
\[h_i=f_{i+1}^{-1} \cdots f_{n-1}^{-1} f_n^{-1},\quad 
h_{i+q(n-1)}=h_0^{-q} h_i h_0^q,\] 
so that $\langle h_0,\ldots,h_{n-1}\rangle= G$. 
By the same idea as in (1), we verify the relations of $F_n$ in $G$:
\[
h_j^{h_i}=h_{j+n-1}\text{ for }0\le i<j.\]
Since every proper quotient of $F_n$ is abelian, we have $G\cong F_n$.\ep

Let $G=G_{\mathcal{F}}$ be a prechain group. We say that $G$ \emph{stabilizes} if for all $N$ sufficiently large, the groups $\{G_N=\langle \{f^N\mid f\in\mathcal{F}\}\rangle\}$ form a single isomorphism class, called the \emph{stable type} of $G$. 
The following asserts that an $n$--prechain group stabilizes for all $n\ge2$.

\begin{thm}\label{thm:dynamical}
For a prechain group $G_\FF$ as in Setting~\ref{set},
the group $G_N:=\form{f^N\mid f\in\mathcal{F}}$ is a chain group isomorphic to $F_n$,
whenever $N$ is sufficiently large. \end{thm}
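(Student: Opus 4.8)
The plan is to verify the two dynamical criteria of Lemma~\ref{lem:dynamical condition} for the powers $f_i^N$, the point being that raising each generator to a high power drives the interior points of its support arbitrarily close to the right endpoint. First I would record the combinatorics of the endpoints forced by the chain-of-intervals hypothesis: for each $i$ one has
\[\partial^- J_i<\partial^- J_{i+1}<\partial^+ J_i<\partial^+ J_{i+1},\]
and the relevant endpoints $\partial^+ J_1,\dots,\partial^+ J_{n-1}$ are finite (if, say, $\partial^+ J_{n-1}$ were $+\infty$, then $J_{n-1}\cap J_n$ would equal $J_n$, contradicting that it is a proper subinterval). The key lemma I would isolate is: for any $t\in J_i$, the iterates $f_i^N(t)$ form an increasing sequence bounded above by $\partial^+ J_i$; since $f_i$ fixes $\partial^+ J_i$ and has no fixed point in $J_i$, continuity forces $f_i^N(t)\to\partial^+ J_i$ as $N\to\infty$. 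Note also that $\supp f_i^N=J_i$, since $f_i(t)>t$ on $J_i$ implies $f_i^N(t)>t$ there.

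Next I would establish that $G_N$ is a chain group by checking condition (1) of Lemma~\ref{lem:dynamical condition} for each adjacent pair $\form{f_i^N,f_{i+1}^N}$. With $x=\partial^- J_i$, $y=\partial^- J_{i+1}$, $z=\partial^+ J_i$, $w=\partial^+ J_{i+1}$, I must produce $N$ with $f_{i+1}^N f_i^N(y)\ge z$. The subtlety is that the point $f_i^N(y)$ fed into $f_{i+1}^N$ itself depends on $N$, so I would fix an auxiliary point $c\in(y,z)=J_i\cap J_{i+1}$: for $N$ large, $f_i^N(y)>c$ by the convergence lemma, whence by monotonicity $f_{i+1}^N f_i^N(y)\ge f_{i+1}^N(c)\to w>z$, so $f_{i+1}^N f_i^N(y)\ge z$ for $N$ large. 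Since there are only finitely many adjacent pairs, a single $N$ works for all of them, and Lemma~\ref{lem:dynamical condition}(1) gives $\form{f_i^N,f_{i+1}^N}\cong F$ for every $i$; thus $G_N$ is a chain group.

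Finally, to identify the isomorphism type I would verify condition (2). Choose $p\in J_1\setminus J_2=(\partial^- J_1,\partial^- J_2]$ and track its image $p_k=f_k^N\cdots f_1^N(p)$. I claim $p_k\to\partial^+ J_k$ as $N\to\infty$, by induction on $k$: the base case $p_1=f_1^N(p)\to\partial^+ J_1$ is the convergence lemma, and for the inductive step, since $\partial^+ J_{k-1}\in J_k$, I fix $c_k\in(\partial^- J_k,\partial^+ J_{k-1})$; for $N$ large one has $p_{k-1}>c_k$ and $p_{k-1}\in J_k$, so $p_k=f_k^N(p_{k-1})$ lies below $\partial^+ J_k$ yet is bounded below by $f_k^N(c_k)\to\partial^+ J_k$, forcing $p_k\to\partial^+ J_k$. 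Taking $k=n$, for $N$ large $p_n\in(\partial^+ J_{n-1},\partial^+ J_n)\subseteq J_n\setminus J_{n-1}$, so
\[f_n^N\cdots f_1^N(\supp f_1^N\setminus\supp f_2^N)\cap(\supp f_n^N\setminus\supp f_{n-1}^N)\ne\varnothing,\]
and Lemma~\ref{lem:dynamical condition}(2) yields $G_N\cong F_n$. The main obstacle throughout is precisely the $N$-dependence of the tracked points; the monotonicity trick with the fixed auxiliary points $c,c_k$ is what lets a single, sufficiently large $N$ simultaneously satisfy all of the finitely many inequalities.
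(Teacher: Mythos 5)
Your proposal is correct and takes essentially the same approach as the paper: the paper's proof simply asserts that after replacing the generators by sufficiently high powers the dynamical conditions of Lemma~\ref{lem:dynamical condition} hold, and then invokes that lemma to conclude both the chain-group property and the isomorphism with $F_n$. Your argument supplies exactly the verification the paper leaves implicit (convergence of iterates to the right endpoints of the supports, together with the fixed-auxiliary-point monotonicity trick that handles the $N$-dependence of the tracked points).
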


\begin{proof}
Let $G=G_{\mathcal{F}}$ be a prechain group. By replacing the elements of $\mathcal{F}$ by sufficiently high positive or negative powers, we may assume that the dynamical conditions of Lemma~\ref{lem:dynamical condition} hold.
The resulting group generated by these powers of homeomorphisms will therefore be the desired chain group.
\end{proof}

For groups $H\le G$, the normal closure of $H$ in $G$ is denoted as $\fform{H}_G$.

\begin{lem}\label{lem:center}
For every chain group $G$, we have $Z(G)=\{1\}$ and $G''=G'$.
\end{lem}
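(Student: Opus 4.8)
The plan is to prove the two claims $Z(G)=\{1\}$ and $G''=G'$ separately, relying on the stabilization machinery and Higman's theorem developed above. For the center, I would first invoke Theorem~\ref{thm:dynamical}: any $n$--chain group $G$ contains, for sufficiently large $N$, a subgroup $G_N=\form{f_i^N}$ isomorphic to the Higman--Thompson group $F_n$, and in particular $G$ contains a copy of $F=F_2$ supported on each overlapping pair of intervals $J_i\cup J_{i+1}$. The key dynamical observation is that $G$ acts on $\bR$ with full support $\bigcup_i J_i$, and the overlapping-chain structure ensures the action has no global fixed structure that a central element could respect. Concretely, suppose $z\in Z(G)$. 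Then $z$ must commute with each $f_i$, so $z$ preserves $\supp f_i=J_i$ and hence preserves each endpoint $\partial^\pm J_i$; more importantly, $z$ must commute with a copy of $F$ acting on $J_i\cap J_{i+1}$, and since $F$ has trivial centralizer in $\Homeo^+$ of its supporting interval (as $F$ acts minimally there, or by directly using that $F$ is centerless with the appropriate transitivity), $z$ must be the identity on each overlap. Since the overlaps $\{J_i\cap J_{i+1}\}$ together with the structure of the chain cover $\supp G$, we conclude $z=\Id$.

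For the second claim $G''=G'$, the natural route is to show $G'$ has trivial abelianization, i.e. $(G')^{\mathrm{ab}}=G'/G''$ is trivial. The cleanest approach is to pass through Theorem~\ref{thm:simple}: that theorem establishes that an $n$--chain group either acts minimally, in which case $G'$ is \emph{simple}, or else canonically surjects onto a minimal $n$--chain group. If $G'$ is simple and nonabelian, then $G''$ is a nontrivial normal subgroup of $G'$, hence $G''=G'$ immediately. Thus the minimal case is handled by simplicity alone. The difficulty is that Lemma~\ref{lem:center} appears \emph{before} Theorem~\ref{thm:simple} in the logical development, so I should not assume the full strength of simplicity; instead I would argue directly using the Higman-type transitivity.

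The self-contained argument I would prefer goes through Lemma~\ref{lem:higman}. The point is that $G'$ (or a suitable large subgroup) acts CO-transitively on its support after passing to compactly supported elements, so that Higman's criterion yields simplicity of the relevant commutator subgroup, and in particular forces $G''=G'$. More elementarily, since $G_N\cong F_n$ for large $N$ and $F_n$ satisfies $F_n''=F_n'$ by Lemma~\ref{lem:brown1}, and since $G'$ is generated by the commutators arising from these stabilized copies together with conjugation by $G$, I would show every generator of $G'$ lies in $G''$ by expressing it as a commutator of elements already in $G'$ using the Thompson-group relations on each overlapping pair. Specifically, in $F=\form{a,b}$ one has explicit commutator identities (using the infinite presentation $g_j^{g_i}=g_{j+1}$) showing $F'=F''$; transporting these via the embeddings $\form{f_i,f_{i+1}}\cong F$ gives that each local piece of $G'$ lies in $G''$, and the overlap structure glues these together.

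The main obstacle will be the gluing step: verifying that the local Thompson-group commutator relations on each overlap $J_i\cap J_{i+1}$ assemble into a global statement that \emph{all} of $G'$ lies in $G''$, since elements of $G$ can move points across several intervals of the chain and the supports interact nontrivially. I expect this to require a careful induction along the chain, using the hypothesis $f_i(t)\ge t$ and the overlapping condition $\partial^-J_i<\partial^-J_{i+1}$ to control how the generators interact, rather than any single clean algebraic identity.
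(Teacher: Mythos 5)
Your proof of $Z(G)=\{1\}$ contains a genuine gap. The pivotal step is your claim that the copy of $F$ given by $\form{f_i,f_{i+1}}$ has trivial centralizer in $\Homeo^+$ of its supporting interval, justified by ``$F$ acts minimally there.'' But a chain group only requires $\form{f_i,f_{i+1}}$ to be \emph{abstractly} isomorphic to $F$; nothing forces its action on $J_i\cup J_{i+1}$ to be minimal, and it can have wandering intervals --- exactly the phenomenon exploited in Proposition~\ref{prop:blow-up}. For such an action the centralizer is genuinely nontrivial: blow up an orbit of a minimal $2$--chain copy of $F$ and lift the two generators via the equivariant section $\phi$ from the proof of Proposition~\ref{prop:blow-up}; then any homeomorphism supported on the wandering intervals and transported equivariantly between them commutes with the lifted copy of $F$. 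So you cannot conclude that a central element $z$ is the identity on the overlaps, and ``centerless with the appropriate transitivity'' does not rescue this, since $z$ need not lie in $\form{f_i,f_{i+1}}$. (Separately, your final gluing step is misstated: the overlaps $J_i\cap J_{i+1}$ do \emph{not} cover $\supp G$; what would save that step is that $z$ commutes with $f_i$ and every $\form{f_i}$--orbit in $J_i$ meets an overlap.) The paper's argument avoids centralizers of $F$ altogether: if $1\ne g\in Z(G)$, then $\supp g$ is $G$--invariant, hence accumulates at both ends of $\supp G=\bR$, hence the germ of $g$ at $-\infty$ is a nonzero power of $f_1$; invariance then forces $\supp g=\bR$, contradicting that $g$ must preserve $\supp f_1$ and therefore fix $\partial^+ J_1$.

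For $G''=G'$, you correctly identify that invoking Theorem~\ref{thm:simple} would be circular (the paper's proof of the dichotomy feeds the perfection of $G'$ \emph{into} Lemma~\ref{lem:higman}). But your preferred substitute via Lemma~\ref{lem:higman} also fails: applying that lemma to $H=G'$ yields simplicity of $G''$, not perfection of $G'$ --- simplicity of $G''$ is perfectly compatible with $G'/G''$ being a nontrivial abelian group --- and in any case the CO-transitivity hypothesis requires minimality, which you do not have. Your ``more elementary'' third route is essentially the paper's actual proof, and the obstacle you anticipate there (a ``careful induction along the chain'' with dynamical control) is a phantom. Indeed, $[f_i,f_{i+1}]\in\form{f_i,f_{i+1}}'=\form{f_i,f_{i+1}}''\le G''$, since $\form{f_i,f_{i+1}}\cong F$ and $F''=F'$ by Lemma~\ref{lem:brown1}; and since generators with $|i-j|>1$ commute outright (disjoint supports), the quotient of $G$ by $\fform{\{[f_i,f_{i+1}]\colon 1\le i<n\}}_G$ is abelian, so $G'$ equals this normal closure. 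As $G''$ is normal in $G$ and contains the normal generators $[f_i,f_{i+1}]$, we get $G'\le G''$ and we are done: normality of $G''$ in $G$ does all the gluing, with no induction and no use of the dynamical hypotheses.
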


\bp
Let $G=G_\FF$ be a chain group for $\FF=\{f_1,\ldots,f_n\}$
as in Section~\ref{sec:intro}. We may assume $\supp G=\bR$.
If $1\ne g\in Z(G)$, then $\supp g$ is $G$--invariant and accumulates at $\pm\infty$. 
Since the germ of $G$ at $-\infty$ is $\form{f_1}$, we see the restriction of $g$ on $(-\infty,t)$ for some $t\in\bR$
coincides with $f_1^k$ for some $k\ne0$.
By the $G$--invariance, we see $\supp g=\bR$. 
As $\supp {f_1}$ is $\form{g}$--invariant, we have a contradiction.

The second part of the lemma follows from
\begin{align*}
&[f_i,f_{i+1}]\in \form{f_i,f_{i+1}}'=\form{f_i,f_{i+1}}''\le G''\unlhd G,\\ 
&G'=\fform{\{[f_i,f_{i+1}]\co 1\le i<n\}}_G\le G''.\qedhere
\end{align*}
\ep

\subsection{A dynamical dichotomy}
Let $G$ be a group faithfully acting on $\bR$.
A closed nonempty $G$--invariant set $\Lambda\sse\bR$ is called a \emph{minimal invariant set} of $G$ if no proper nonempty closed subset of $\Lambda$ is $G$--invariant.
Such a set $\Lambda$ exists in the case when $G$ is finitely generated~\cite{Navas2011}, 
but it may not be unique.

Assume a minimal invariant set $\Lambda$ of $G\le\Homeo^+(\bR)$ exists.
We say $G$ is \emph{minimal} if $\Lambda=\bR$.
If $\Lambda$ is perfect and totally disconnected,
then $\Lambda$ is called an \emph{exceptional} minimal invariant set.  

Let us first note two lemmas on general dichotomy of one--dimensional homeomorphism groups;
the proofs are variations of~\cite[Section 2.1.2]{Navas2011}, which we omit.

\begin{lem}\label{lem:dichotomy}
Let $G\le\Homeo^+(\bR)$ be a group such that $\supp G=\bR$.
\be
\item
If all the $G$--orbits share a common accumulation point $x$,
then the closure $\Lambda$ of $Gx$ is the unique minimal invariant set; furthermore, either $\Lambda=\bR$ or 
$\Lambda$ is exceptional.
\item
If $G$ admits an exceptional minimal invariant set $\Lambda$,
then there exists a monotone continuous surjective map $h\co \bR\to\bR$ and a homomorphism $\Phi\co G\to\Homeo^+(\bR)$ such that $\Phi(G)$ is minimal and such that $hg=\Phi(g)h$ for each $g\in G$.
\ee
\end{lem}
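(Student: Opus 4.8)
The plan is to handle the two parts separately: part (1) is the standard identification of the unique minimal set together with its trichotomy, and part (2) is the classical \emph{gap--collapsing} (semi--conjugacy) construction. For part (1), I would set $\Lambda=\overline{Gx}$ and first verify it is a minimal invariant set. It is nonempty, closed, and $G$--invariant, the last because the closure of a $G$--invariant set is again invariant when $G$ acts by homeomorphisms. To see minimality, suppose $\varnothing\ne\Lambda'\sse\Lambda$ is closed and $G$--invariant and pick $y\in\Lambda'$; by hypothesis $x$ is an accumulation point of $Gy$, so $x\in\overline{Gy}\sse\Lambda'$, whence $Gx\sse\Lambda'$ and $\Lambda\sse\Lambda'$. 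The identical argument yields uniqueness: if $M$ is any minimal invariant set and $z\in M$, then $x$ accumulates on $Gz\sse M$, so $x\in M$, so $\Lambda\sse M$, and minimality of $M$ forces $M=\Lambda$.

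Next I would prove the dichotomy. I would first note that $\Lambda$ is perfect: since $x$ accumulates on its own orbit $Gx$, each $gx$ accumulates on $Gx=g(Gx)$ and each point of $\overline{Gx}\setminus Gx$ is a limit of distinct orbit points, so no point of $\Lambda$ is isolated. Then I would look at the boundary $\partial\Lambda$ taken in $\bR$: it is closed, contained in $\Lambda$, and $G$--invariant, so by minimality $\partial\Lambda$ is either empty or all of $\Lambda$. If $\partial\Lambda=\varnothing$ then $\Lambda$ is clopen and nonempty, hence $\Lambda=\bR$. If $\partial\Lambda=\Lambda$ then $\Lambda$ has empty interior, so no connected component can contain an interval; since connected subsets of $\bR$ are intervals, every component is a point and $\Lambda$ is totally disconnected, i.e. (being also perfect) exceptional.

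For part (2) I would collapse the complementary gaps of the given exceptional set $\Lambda$. I would first record that $\Lambda$ is unbounded on both sides: if it had a largest point $M$, order preservation together with $g\Lambda=\Lambda$ would force $gM=M$ for every $g$, contradicting $M\in\supp G=\bR$, and symmetrically on the left. Hence each component of $\bR\setminus\Lambda$ is a bounded open interval, and since $\Lambda$ is perfect the closures of distinct gaps are disjoint (a common endpoint would be isolated in $\Lambda$). Collapsing each closed gap to a point defines a nondecreasing surjection $h\co\bR\to\bR$ that is continuous and is constant precisely on the closed gaps. Because every $g\in G$ preserves $\Lambda$ it permutes the gaps, so $h(s)=h(t)$ implies $h(gs)=h(gt)$; this allows me to define $\Phi(g)$ by $\Phi(g)(h(t))=h(gt)$. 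Routine checks then show $\Phi(g)$ is a strictly increasing surjection of $\bR$ (hence a homeomorphism), that $\Phi$ is a homomorphism satisfying $hg=\Phi(g)h$, and that $\Phi(G)$ is minimal, since $h$ carries the dense orbit of any point of $\Lambda$ onto a dense subset of $h(\Lambda)=\bR$.

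The step I expect to be the real obstacle is the construction of $h$ in part (2): it must be a continuous monotone surjection onto all of $\bR$ that collapses exactly the gaps while remaining strictly monotone across $\Lambda$. The naive recipe $h(t)-h(s)=\mathrm{Leb}\big((s,t)\cap\Lambda\big)$ degenerates when $\Lambda$ has Lebesgue measure zero, as for a Cantor set, so $h$ must instead be obtained either as the quotient map for the disjoint family of closed gaps or from an atomless, fully supported measure on $\Lambda$ with infinite mass on each half--line. The genuinely nontrivial point is purely point--set--topological: one must check that the resulting quotient is a separable, order--complete linear order without endpoints, hence order--isomorphic (and so homeomorphic) to $\bR$. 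Once $h$ is in hand, the equivariance and minimality claims follow formally from the fact that $G$ permutes the gaps and that $h$ respects the order.
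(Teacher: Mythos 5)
Your proposal is correct, and it matches the paper's intended approach as closely as possible: the paper in fact gives no proof of this lemma, explicitly omitting it as a ``variation of \cite[Section 2.1.2]{Navas2011}'', and your two-part argument (orbit-closure minimality and uniqueness plus the boundary trichotomy for part (1), then gap-collapsing for part (2)) is precisely that standard variation, with the correct line-specific adaptations, namely using $\supp G=\bR$ to rule out a largest or smallest point of $\Lambda$ so that all gaps are bounded. The one step you rightly flag as delicate---identifying the monotone quotient of $\bR$ by the closed gaps with $\bR$ itself---goes through by either of your suggested routes, and once $h$ is known to be a nondecreasing surjection $\bR\to\bR$ its continuity is automatic, since such a map can have no jump discontinuities.
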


\begin{rem}
The group $\Phi(G)\le\Homeo^+(\bR)$ above is called a \emph{minimalization} of $G$.
Note that $h$ maps the closure of each component of $\bR\setminus\Lambda$ to a single point.
Such a map $h$ is called the \emph{devil's staircase} map.
\end{rem}

\begin{lem}\label{lem:prechain}
For a prechain group $G$ as in Setting~\ref{set}, all of the following hold.
\be
\item
There exists $x\in\supp G$ such that every orbit accumulates at $x$.
\item
For each $g\in G$ and for each compact set $A\sse \supp G$, there exists $u\in G'$ such that the actions of $g$ and $u$ agree as functions on $A$.
\item
Every $G$--orbit is a $G'$--orbit.
\item
$G'$ is locally CO-transitive.
\ee
\end{lem}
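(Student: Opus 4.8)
The plan is to isolate a single ``pushing'' mechanism from the dynamics, prove (1) from it, then prove (2) by an abelianization argument, after which (3) and (4) follow quickly.

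Write $\ell=\partial^-J_1$ and $r=\partial^+J_n$, so that $\supp G=\bigcup_i J_i=(\ell,r)$, and recall from Setting~\ref{set} that $\ell=\partial^-J_1<\partial^-J_2<\dots$ while the overlaps give $\partial^-J_{i+1}<\partial^+J_i$ for each $i<n$. The basic observation is that each $f_i$ moves every point of $J_i=\supp f_i$ strictly to the right, so that $f_i^{k}(t)\nearrow\partial^+J_i$ and $f_i^{-k}(t)\searrow\partial^-J_i$ for $t\in J_i$ as $k\to\infty$.

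First I would establish a pushing lemma: \emph{every point $p\in\supp G$ can be carried into $J_1$ by an element of $G$.} If $p\in J_i$ with $i\ge2$, then a high power $f_i^{-k}$ sends $p$ below $\partial^+J_{i-1}$ (since $f_i^{-k}(p)\to\partial^-J_i<\partial^+J_{i-1}$) while keeping it above $\ell$, hence into $J_{i-1}$; downward induction on $i$ lands $p$ in $J_1$. Once $p$ lies in $J_1$, the powers $f_1^{k}(p)$ converge to $\partial^+J_1$ and the powers $f_1^{-m}(p)$ converge to $\ell$. This yields (1) with accumulation point $x=\partial^+J_1\in J_2\subseteq\supp G$; symmetrically, every orbit accumulates at $\partial^-J_n$ and can be pushed arbitrarily close to $r$.

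The heart of the argument is (2). Fix $g\in G$ and a compact $A\subseteq\supp G$, and choose $\ell<c\le d<r$ with $A\subseteq[c,d]$. Writing the class of $g$ in the abelianization as $[g]=\sum_i m_i[f_i]$ (the $[f_i]$ generate $H_1(G)$ since the $f_i$ generate $G$), it suffices to produce $w\in G$ with $\supp w\cap A=\varnothing$ and $[w]=-[g]$, for then $u:=gw\in G'$ satisfies $u|_A=g|_A$. I would build $w$ from conjugates of the generators: since conjugation preserves the class in $H_1(G)$ and $\supp(h f_i h^{-1})=h(J_i)$, it is enough to find, for each $i$, some $h_i\in G$ with $h_i(J_i)\cap A=\varnothing$; then $w=\prod_i (h_if_ih_i^{-1})^{-m_i}$ works. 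For $i<n$ the inner endpoint $\partial^+J_i$ lies in $\supp G$, so by the pushing lemma some $h_i$ sends it below $c$, forcing $h_i(J_i)\subseteq(\ell,c)$; for $i=n$ the inner endpoint $\partial^-J_n\in\supp G$ can symmetrically be pushed above $d$, forcing $h_n(J_n)\subseteq(d,r)$. The subtle point, and the main obstacle, is exactly this representability step: one must realize the inverse class of \emph{every} generator off $A$, including the two outermost generators $f_1$ and $f_n$, one of whose endpoints is the (possibly infinite) end of $\supp G$ and cannot be moved into the interior; the resolution is to push instead on their \emph{inner} endpoints, which are interior points, thereby dragging each extremal support entirely to one side of $A$.

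Finally, (3) is the special case $A=\{p\}$ of (2): any $q=gp\in Gp$ equals $up$ for the resulting $u\in G'$, whence $Gp=G'p$. For (4), given a proper compact $A\subseteq\supp G$, I would first move $A$ wholesale toward $x=\partial^+J_1$: the same downward induction as in the pushing lemma, now applied to $\sup A$ and $\inf A$ simultaneously and using that each $f_i^{-k}$ is monotone, carries $A$ into a compact subset of $J_1$ with supremum below $\partial^+J_1$, after which a high power $f_1^{m}$ squeezes it into any prescribed neighborhood $B$ of $\partial^+J_1$. This produces $g\in G$ with $g(A)\subseteq B$; applying (2) to this $g$ and $A$ upgrades it to $u\in G'$ with $u(A)=g(A)\subseteq B$, which is precisely local CO-transitivity with the single witness $x=\partial^+J_1$.
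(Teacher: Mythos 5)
Your proof is correct and follows essentially the same route as the paper's: the same accumulation point $\partial^+\supp f_1$, the same mechanism for part (2) of correcting $g$ by conjugates of generators whose supports have been pushed to the two ends of $\supp G$ so that the correction kills the abelianization class without touching $A$ (the paper does this letter-by-letter on a word representing $g$, you do it on the class $[g]=\sum_i m_i[f_i]$, which is the same bookkeeping), and the same push-into-$J_1$-then-squeeze argument for part (4). The only cosmetic difference is that the paper pushes $A$ into $\supp f_1$ using powers of the single element $f_n\cdots f_1$ rather than your generator-by-generator downward induction.
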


\bp
We may assume $\supp G=\bR$ as usual.

(1) Every orbit accumulates at $x=\partial^+ f_1$.

(2) Let us write $g=s_k \cdots s_2 s_1$
for some $s_i\in \FF\cup\FF^{-1}$.
There exist open intervals 
$J\sse \supp f_1$ and $K\sse \supp f_n$
such that
\[\partial^-J=\partial^-\supp f_1,\quad \partial^+K=\partial^+\supp f_n\]
and such that for each $i\in\{1,2,\ldots,k\}$ we have
$s_i \cdots s_2 s_1(A) \cap (J\cup K)=\varnothing$.
There exists an $h_i\in G$ such that
$h_i\supp s_i\sse J\cup K$. 
Then the following element has the desired properties:
\[u=\left(\prod_{i=1}^k h_i s_i^{-1} h_i^{-1}\right) g\in G'.\]

(3) This is immediate by applying the part (2) for the case that $A$ is a singleton.

(4) By the part (3), it suffices to show that $G$ is locally CO-transitive.
We let $x=\partial^+\supp f_1$.
Let us fix a compact set $A\sse \bR$
and an open neighborhood $J$ of $x$.
For $h=f_n\cdots f_1$,
we can find $\ell,m\gg0$ such that
$ h^{-m}(A)\sse \supp f_1$ and such that
$
f_1^\ell h^{-m}(A)\sse J.$
\ep

We have the following dichotomy of chain group actions.

\begin{thm}\label{thm:dichotomy}
For a chain group $G$, exactly one of the following holds:
\begin{enumerate}[(i)]
\item
$G$ is minimal; in this case, $G'$ is simple.
\item
$G$ admits a unique exceptional minimal invariant set; in this case, $G$ surjects onto a minimal chain group.
\ee
\end{thm}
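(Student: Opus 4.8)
The plan is to first establish the dichotomy itself together with its mutual exclusivity, and then treat the two cases separately. Throughout I would normalize so that $\supp G=\bR$: since $G$ is a chain group, $\supp G=\bigcup_i J_i$ is an open interval, which I identify with $\bR$ and on which $G$ acts. By Lemma~\ref{lem:prechain}(1) every $G$--orbit accumulates at the common point $x=\partial^+ J_1$, so Lemma~\ref{lem:dichotomy}(1) applies: the closure $\Lambda=\overline{Gx}$ is the unique minimal invariant set, and either $\Lambda=\bR$ or $\Lambda$ is exceptional. These alternatives are mutually exclusive, since an exceptional set is totally disconnected and in particular proper; this yields ``exactly one of (i), (ii)'', and the uniqueness asserted in (ii) is precisely the uniqueness of $\Lambda$ from Lemma~\ref{lem:dichotomy}(1).

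For case (i), suppose $G$ is minimal; I want $G'$ simple. The key preliminary observation is that $G'\le\Homeo_c(\bR)$. To see this, note that near $-\infty$ only $f_1$ has nontrivial support and near $+\infty$ only $f_n$ does, so taking germs at the two ends defines homomorphisms $\mu_-,\mu_+\colon G\to\bZ$ with $\mu_-(f_1)=1$ and $\mu_-(f_i)=0$ for $i\ge2$, and symmetrically for $\mu_+$ (the germ of $f_1$ has infinite order because $f_1(t)>t$ in a neighborhood of $-\infty$). Any $g\in G'$ lies in $\ker\mu_-\cap\ker\mu_+$, hence has trivial germ at both ends and therefore compact support. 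Now since $G$ is minimal, Lemma~\ref{lem:prechain}(3) shows every $G'$--orbit equals a $G$--orbit and is thus dense, so $G'$ is minimal; combined with the local CO-transitivity of $G'$ from Lemma~\ref{lem:prechain}(4), Lemma~\ref{lem:cpt-trans} shows $G'$ is CO-transitive. Applying Lemma~\ref{lem:higman} to $G'\le\Homeo_c(\bR)$ gives that $G''$ is simple, and $G''=G'$ by Lemma~\ref{lem:center}, so $G'$ is simple.

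For case (ii), suppose $\Lambda$ is exceptional. By Lemma~\ref{lem:dichotomy}(2) there is a monotone continuous surjection $h\colon\bR\to\bR$ collapsing the closures of the gaps of $\Lambda$ together with a homomorphism $\Phi\colon G\to\Homeo^+(\bR)$ such that $\Phi(G)$ is minimal and $hg=\Phi(g)h$. It remains to show that $\Phi(G)$ is a chain group with generators $\Phi(f_1),\ldots,\Phi(f_n)$. That each $\Phi(f_i)$ moves points weakly rightward is immediate from monotonicity and surjectivity of $h$. For the chain structure I would identify $\supp\Phi(f_i)$ with the interior of $h(J_i)$ and show that these intervals inherit the chain pattern of the $J_i$; the one thing to rule out is that $h$ collapses some $J_i$, or some overlap $J_i\cap J_{i+1}$, to a point. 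Here I use that $x=\partial^+J_1\in\Lambda$ and that its $G$--orbit, which lies in $\Lambda$, meets the interior of every $J_i$ and of every overlap $J_i\cap J_{i+1}$, obtained by pushing $x$ across the chain with the generators; since points of $\Lambda$ are not collapsed together by $h$, the images $\supp\Phi(f_i)$ remain a genuine chain of intervals.

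Finally, for each $i$ the restriction $\Phi\colon\form{f_i,f_{i+1}}\to\form{\Phi(f_i),\Phi(f_{i+1})}$ is a surjection of $F$ onto a quotient; since every proper quotient of $F$ is abelian, it suffices to check that the image is nonabelian, i.e. that $\Phi$ does not kill $[f_i,f_{i+1}]$. As $\ker\Phi$ consists exactly of the elements acting trivially on $\Lambda$, and $\Lambda$ meets the overlap $J_i\cap J_{i+1}$ in a perfect set on which $\form{f_i,f_{i+1}}\cong F$ acts nontrivially, this commutator survives, so $\form{\Phi(f_i),\Phi(f_{i+1})}\cong F$. Hence $\Phi(G)$ is a minimal chain group onto which $G$ surjects. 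I expect the main obstacle to be precisely this last structural point: verifying that the minimalization does not degenerate, i.e. that $\Lambda$ genuinely meets the interiors of the overlaps, so that neither the chain-of-intervals structure nor the Thompson relations collapse under $\Phi$.
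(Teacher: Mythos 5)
Your proof is correct and follows essentially the same route as the paper's: the dichotomy via Lemmas~\ref{lem:dichotomy} and~\ref{lem:prechain}(1), simplicity in the minimal case via compact supports from the germs at the two ends together with Lemmas~\ref{lem:prechain}, \ref{lem:cpt-trans}, \ref{lem:higman} and~\ref{lem:center}, and in the exceptional case the minimalization $\Phi$ with the image chain structure verified by showing $\Lambda$ meets every $J_i$ and every overlap. The only imprecision is the phrase ``points of $\Lambda$ are not collapsed together by $h$'': in fact $h$ does identify the two endpoints of each gap (it is at most two-to-one on $\Lambda$, as the paper notes), but since $\Lambda$ is perfect it meets each $J_i$ and each overlap $J_i\cap J_{i+1}$ in infinitely many points, so the images $\supp\Phi(f_i)$ are still non-degenerate and form a genuine chain, and your conclusion stands.
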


\bp
We may assume $\supp G=\bR$.
From Lemmas~\ref{lem:dichotomy} and~\ref{lem:prechain} (1),
we have that either $G$ is minimal or exceptional.

Suppose $G$ is minimal, and write $H=G'$.
Lemma~\ref{lem:prechain} implies that $H$ is minimal and CO-transitive. 
Elements of $H$ have the property that their supports are contained in compact intervals of $\bR$, since the germs of $G$ at $-\infty$ and $\infty$ are abelian quotients of $G$. Hence, $H$ consists of compactly supported homeomorphisms of $\bR$. 
Since $H$ is perfect and nonabelian, Lemma \ref{lem:higman}  implies that $H=H'$ is simple.

Suppose $G$ is exceptional. 
We use the notation from Setting~\ref{set} and Lemma~\ref{lem:dichotomy}.
The map $h$ is two-to-one or one-to-one at each point in $\Lambda$. Since $\Lambda$ accumulates at both boundary points of each interval $\supp f_i$, the supports of the $\Phi(f_i)$ are the non-degenerate open intervals $h(\supp f_i)$, which together still form a chain of intervals for $i=1,2,\ldots,n$.
Since $\form{f_i,f_{i+1}}$ has only abelian proper quotients, we see $\Phi(\form{f_i,f_{i+1}})\cong F$. Hence $\Phi(G)$ is a chain group which acts minimally on its support, namely $\Phi(\Lambda)$.
\ep

It follows that every proper quotient of a minimal chain group is abelian.

\section{Chain groups and subgroups of $\Homeo^+(I)$}\label{sec:fg}

In this section, we prove Theorem \ref{thm:embed}, which is fundamental for establishing many of the remaining results claimed in the introduction.

\subsection{Groups of homeomorphisms}\label{subsec:continuous}
For an interval $J\sse \bR$, recall that $\Homeo^+(J)$ may be regarded as a subgroup of $\Homeo^+(\bR)$, using the extension by the identity outside $J$. We continue to use the notations $a,b\in\Homeo^+(\bR)$ from Section~\ref{s:prelim}. 
Let us note the following independent observation:

\begin{lem}\label{lem:comm-lemma}
If $G\le\Homeo^+(0,1)$ is an $n$--generated group such that $G/G'\cong \bZ^n$ for some $n\ge1$,
then $G$ embeds into $\form{G,a}'\le\Homeo^+(\bR)$.
\end{lem}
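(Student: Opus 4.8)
The plan is to analyze the group $W:=\form{G,a}$ explicitly. First I would observe that since $\supp G\subseteq(0,1)$ and $a$ is translation by $1$, the conjugate $a^kGa^{-k}$ is supported in $(k,k+1)$; hence the subgroups $\{a^kGa^{-k}\}_{k\in\bZ}$ have pairwise disjoint supports, pairwise commute, and each is isomorphic to $G$. Thus $W$ is the restricted wreath product $G\wr\bZ$, with base $B=\bigoplus_{k\in\bZ}a^kGa^{-k}$ and $W/B\cong\bZ$ generated by $a$. Since $W/B$ is abelian, $W'\le B$; and because conjugation by $a$ merely shifts coordinates, the map $\epsilon\colon B\to G/G'$ sending a tuple to the product of the $G/G'$--images of its coordinates is $a$--invariant and kills commutators, so $W'\subseteq\ker\epsilon$. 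The naive attempt to embed $G$ by $g\mapsto g\cdot ag^{-1}a^{-1}$ (placing $g$ in coordinate $0$ and $g^{-1}$ in coordinate $1$) lands in $\ker\epsilon$ but fails to be a homomorphism when $G$ is nonabelian, and this is exactly the obstruction to overcome.

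The key idea is to realize the homological inverse through honest homomorphisms instead of through $g\mapsto g^{-1}$. I would choose generators $g_1,\dots,g_n$ of $G$ whose images form a basis of $G/G'\cong\bZ^n$, and let $\rho_i\colon G\to\bZ$ be the $i$--th coordinate of the abelianization, so $\rho_i(g_j)=\delta_{ij}$. For each $i$ the map $g\mapsto g_i^{-\rho_i(g)}$ is a genuine homomorphism $G\to\form{g_i}\le G$, being the composite $G\xrightarrow{\rho_i}\bZ\to\form{g_i}$. I would then define
\[
\Phi\colon G\to W,\qquad \Phi(g)=g\cdot\prod_{i=1}^{n}a^{i}g_i^{-\rho_i(g)}a^{-i},
\]
placing $g$ in coordinate $0$ and the correction term in coordinate $i$. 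Because the $n+1$ coordinates involved have disjoint supports, the factors lie in pairwise commuting subgroups, and each coordinate map is a homomorphism; a short rearrangement then gives $\Phi(g)\Phi(h)=\Phi(gh)$. Injectivity is immediate, since projecting $\Phi(g)$ to coordinate $0$ returns $g$.

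It remains to check $\Phi(g)\in W'$. Working modulo $W'$, each commutator $[a^i,g_i^{-\rho_i(g)}]$ is trivial, so $\Phi(g)\equiv g\cdot\prod_{i=1}^{n}g_i^{-\rho_i(g)}$; this element lies in $G$ and its image in $G/G'$ is $\rho(g)-\sum_i\rho_i(g)e_i=0$, hence it lies in $G'\subseteq W'$. Therefore $\Phi$ embeds $G$ into $W'=\form{G,a}'$, as desired. The main obstacle is precisely the failure of the anti--diagonal map to be multiplicative; the device of using the cyclic homomorphisms $g\mapsto g_i^{-\rho_i(g)}$, spread over distinct coordinates so that the total augmentation cancels while each coordinate remains a homomorphism, is what resolves it, and the hypothesis $G/G'\cong\bZ^n$ with $n$ generators is exactly what makes these coordinate homomorphisms available.
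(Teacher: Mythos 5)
Your proof is correct and is essentially the paper's argument: your map $\Phi$ sends the generator $g_j$ to $g_j\,a^jg_j^{-1}a^{-j}=[g_j,a^j]$, which is exactly the element $h_j=g_j\bar g_j^{-1}$ that the paper uses to generate the embedded copy $H\le\langle G,a\rangle'$, and the hypothesis $G/G'\cong\bZ^n$ plays the same role (killing exponent sums) in both verifications. The only difference is presentational: you write out $\Phi$ on all of $G$ and check multiplicativity, injectivity via the coordinate-$0$ projection, and containment in the commutator subgroup explicitly, whereas the paper defines the map on generators and leaves these checks as straightforward.
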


\bp
Let us write
\begin{align*}
G&=\form{g_1,\ldots,g_n},\quad G/G'\cong \oplus_{i=1}^n \bZ g_i,
\\
\bar g_i &= a^i g_i a^{-i}\in\Homeo^+(i,i+1),\quad h_i=g_i \bar g_i^{-1}\in\Homeo^+(\bR),\\
H&=\form{h_1,\ldots,h_n}\le \form{G,a}'.
\end{align*}
It is straightforward to see that
$\phi(h_i)=g_i$ defines an isomorphism $H\cong G$.\ep

We define $\AA$ as the set of  $g\in\Homeo^+(\bR)$  such that 
\[ g(x)\ \begin{cases}
= x&\text{ if }x\leq 0,\\
\in (x,x+1) &\text{ if }0<x<1,\\
  =x+1&\text{ if }x\ge 1.
\end{cases}\]
Note that $b\in \AA$.

\begin{lem}\label{lem:chain-lemma}
For each $g_1,\ldots,g_n\in\AA $,
the group $\form{g_1,\ldots,g_n,a}$ is abstractly isomorphic to an $(n+1)$--chain group.
\end{lem}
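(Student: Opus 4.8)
The plan is to exhibit $G=\form{g_1,\ldots,g_n,a}$ itself as an $(n+1)$--chain group, by replacing the given generators with a cleverly chosen new generating set whose supports genuinely form a chain of intervals. The given generators cannot be used directly, since every $g_i$ has support $(0,\infty)$ and $\supp a=\bR$, so their supports are nested rather than staggered. The key mechanism is that conjugation by $a$ translates supports by integers, which lets me spread out the $g_i$; what makes the construction close up is the rigidity built into $\AA$, namely that each $g_i$ coincides with the translation $a$ on $[1,\infty)$ and with the identity on $(-\infty,0]$.

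Concretely, I would set $E_i=a^{2(i-1)}g_ia^{-2(i-1)}$, so that $\supp E_i=(2(i-1),\infty)$, with $E_i$ equal to $x\mapsto x+1$ on $[2i-1,\infty)$ and to the identity on $(-\infty,2(i-1)]$. I then propose the generating set
\[ \ell=g_1^{-1}a,\qquad f_i=E_iE_{i+1}^{-1}\ (1\le i\le n-1),\qquad E_n, \]
with respective supports $(-\infty,1)$, $(2i-2,2i+2)$, and $(2(n-1),\infty)$. A short telescoping computation, using $E_i=f_if_{i+1}\cdots f_{n-1}E_n$ and $a=E_1\ell$ (note $E_1=g_1$), shows these $n+1$ elements still generate $G$. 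One checks directly that each moves points to the right and that the listed supports form an $(n+1)$--chain with strictly increasing left endpoints. Here the spacing $2$ (conjugating by $a^2$ rather than $a$) is forced: the bump of $E_i$ spills one unit past $2(i-1)$, so with spacing $1$ the supports of $f_i$ and $f_{i+2}$ would overlap, violating the disjointness demanded of non-consecutive chain intervals.

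The crux, and the step I expect to be the main obstacle, is upgrading this prechain structure to a genuine chain group, i.e. verifying $\form{f,f'}\cong F$ for each consecutive pair; this is precisely the gap between a prechain group and a chain group, and a general prechain group need not close it. I would invoke Lemma~\ref{lem:dynamical condition}(1), whose hypothesis is the pushing inequality $gf(y)\ge z$. What rescues the argument is that each $g_i$ equals the translation $a$ exactly on $[1,\infty)$: tracing the definitions, the relevant images $f_1\ell(0)$, $f_{i+1}f_i(2i)$, and $E_nf_{n-1}(2(n-1))$ all land exactly on the right endpoint of the left interval of the pair, so the inequality holds with equality in every case. Thus Lemma~\ref{lem:dynamical condition}(1) applies to each consecutive pair, the new generating set realizes $G$ as an $(n+1)$--chain group satisfying Setting~\ref{set}, and in particular $G$ is abstractly isomorphic to an $(n+1)$--chain group.
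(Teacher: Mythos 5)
Your proof is correct and is essentially the paper's own argument: the paper likewise keeps $g_1^{-1}a$ as the leftmost generator, uses telescoping differences of $a$--conjugates of the $g_i$ as the middle generators, keeps a conjugate of $g_n$ as the rightmost generator, and then closes the prechain-to-chain gap by invoking Lemma~\ref{lem:dynamical condition}(1) through the same exact-equality computation (the paper's Claim: $f_{i+1}f_i(i)=i+1$). The only deviations --- your spacing $2$ and composition order $E_iE_{i+1}^{-1}$ versus the paper's spacing $1$ with the opposite order $f_i=(a^ig_{i+1}^{-1}a^{-i})(a^{i-1}g_ia^{1-i})$, whose supports fit in $(i-1,i+1)$ --- are cosmetic.
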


\bp
Write $B=\{g_1,\ldots,g_n\}$
and define
\begin{align*}
f_0&=g_1^{-1}a,\quad f_n=a^{n-1}g_na^{1-n},\\
f_i&= (a^i g_{i+1}^{-1}a^{-i})(a^{i-1}g_ia^{1-i})\text{ for }1\le i\le n-1,\\
\FF&=\{f_0,\ldots,f_n\},\text{ and }G=\form{\FF}=\form{B,a}.
\end{align*}
We make the following easy observations.

\begin{claim*}
We have the following:
\setcounter{claim}{0}
\be
\item
$\supp f_0=(-\infty,1)$, $\supp f_n=(n-1,\infty)$.
\item
$\supp f_i=(i-1,i+1)$ for $1\le i\le n-1$.
\item
$f_{i+1}f_i(i)=i+1$ for $1\le i\le n-1$.
\ee
\end{claim*}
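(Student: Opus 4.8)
The plan is to reduce everything to the explicit piecewise description of a single building block and its conjugates. First I would record, for any $g\in\AA$ and any integer $k$, how $a^kga^{-k}$ acts: since $a^kga^{-k}(x)=g(x-k)+k$, it is the identity on $(-\infty,k]$, pushes each $x\in(k,k+1)$ strictly to the right into $(x,x+1)$, and equals $x+1$ on $[k+1,\infty)$. Dually, $a^kg^{-1}a^{-k}$ is the identity on $(-\infty,k]$, carries $(k,k+2)$ strictly to the left onto $(k,k+1)$, and equals $x-1$ on $[k+2,\infty)$; here one uses that $g$ maps $[0,1]$ homeomorphically onto $[0,2]$ with $g(0)=0$ and $g(1)=2$, so $g^{-1}$ maps $[0,2]$ onto $[0,1]$ and moves points strictly left there. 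These two descriptions are the only computations needed.

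For part (1) the supports are immediate. Writing $f_0(x)=g_1^{-1}(x+1)$, a point is fixed iff $g_1(x)=x+1$, which by the trichotomy defining $\AA$ happens exactly for $x\geq 1$; hence $\supp f_0=(-\infty,1)$. Since $f_n=a^{n-1}g_na^{1-n}$, the block description with $k=n-1$ gives $\supp f_n=(n-1,\infty)$ directly.

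For part (2) I would write $f_i=P\circ Q$ with $Q=a^{i-1}g_ia^{1-i}$ (supported on $(i-1,\infty)$) and $P=a^ig_{i+1}^{-1}a^{-i}$ (supported on $(i,\infty)$), and analyze the composite on the regions $x\leq i-1$, $i-1<x<i$, $i\leq x<i+1$, and $x\geq i+1$. The inclusion $\supp f_i\subseteq(i-1,i+1)$ follows from a telescoping cancellation at both ends: both factors fix $x\leq i-1$, while for $x\geq i+1$ one has $Q(x)=x+1\geq i+2$ and then $P(x+1)=x$, so $f_i(x)=x$. The reverse inclusion is the delicate point, and I expect it to be the main obstacle: I must show $f_i$ has \emph{no} interior fixed point, which I would do by proving $f_i(x)>x$ throughout $(i-1,i+1)$, splitting on whether $Q(x)$ has crossed the threshold $i$ and in each case invoking the \emph{strict} inequality $g(x)\in(x,x+1)$ from the definition of $\AA$ (so $g$ moves points strictly right but by strictly less than $1$). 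This strictness is exactly what rules out accidental cancellation in the interior; the bookkeeping across the four regions is the only genuinely fiddly part.

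Finally, for part (3) I would evaluate $f_i(i)$ from the block description: $Q(i)=i+1$ and $P(i+1)=g_{i+1}^{-1}(1)+i$, so $f_i(i)=i+g_{i+1}^{-1}(1)\in(i,i+1)$. Applying $f_{i+1}=P'Q'$ with $Q'=a^ig_{i+1}a^{-i}$ and $P'=a^{i+1}g_{i+2}^{-1}a^{-(i+1)}$ (with $f_{i+1}=f_n$ and no left factor in the boundary case $i=n-1$), the right factor sends $i+g_{i+1}^{-1}(1)$ to $g_{i+1}(g_{i+1}^{-1}(1))+i=i+1$, and the left factor fixes $i+1$ since $g_{i+2}^{-1}(0)=0$. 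Hence $f_{i+1}f_i(i)=i+1$. I would remark that this identity is precisely the dynamical hypothesis needed to feed each consecutive pair into Lemma~\ref{lem:dynamical condition}(1) and conclude $\langle f_i,f_{i+1}\rangle\cong F$, which explains why it is isolated in the claim.
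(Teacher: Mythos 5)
Your proposal is correct and takes the approach the paper intends: the paper offers no argument for this claim, labeling it ``easy observations,'' and your direct computation via the normalized blocks $a^kga^{-k}$ and $a^kg^{-1}a^{-k}$ is exactly the verification left to the reader. The details check out, including the two points that genuinely need care --- the strictness $g(x)\in(x,x+1)$ on $(0,1)$ ruling out interior fixed points of $f_i$ in part (2), and the boundary case $i=n-1$ (where $f_{i+1}=f_n$ has no left factor) in part (3).
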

Now Lemma~\ref{lem:dynamical condition} and the claim above imply that $\FF$ generates $G$ as an $(n+1)$--chain group.
\ep

\begin{cor}\label{cor:2-to-3} Thompson's group $F$ is isomorphic to an $n$--chain group for each $n\ge2$. \end{cor}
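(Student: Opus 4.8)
The plan is to obtain Corollary~\ref{cor:2-to-3} as an immediate application of Lemma~\ref{lem:chain-lemma}, exploiting the freedom to repeat a single element of $\AA$. Fixing a target length $n\ge2$, I would set $k=n-1\ge1$ and feed Lemma~\ref{lem:chain-lemma} the $k$--tuple $(g_1,\ldots,g_k)=(b,\ldots,b)$, every entry equal to the standard generator $b$. Since it was already observed in the text that $b\in\AA$, this is a legitimate input to the lemma.

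The key point is that the generated group collapses: $\form{g_1,\ldots,g_k,a}=\form{b,a}=F$, the standard copy of Thompson's group. Lemma~\ref{lem:chain-lemma}, applied with its parameter equal to $k$, then asserts that this group is abstractly isomorphic to a $(k+1)$--chain group. As $k+1=n$, this gives exactly that $F$ is isomorphic to an $n$--chain group, as desired.

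The only bookkeeping to watch is the index shift: the lemma converts $k$ elements of $\AA$ together with $a$ into a $(k+1)$--chain group, so producing an $n$--chain group requires precisely $k=n-1$ copies of $b$, and the hypothesis $n\ge2$ guarantees $k\ge1$, so that at least one element of $\AA$ is supplied and the lemma applies. There is no genuine obstacle here: the entire dynamical content, namely the verification of the support conditions in the Claim and the invocation of Lemma~\ref{lem:dynamical condition}, is already carried out inside the proof of Lemma~\ref{lem:chain-lemma}. The corollary is simply the observation that the degenerate choice of repeated generators recovers $F$ itself while leaving the chain length as a free parameter.
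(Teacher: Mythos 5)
Your proposal is correct and is exactly the paper's own proof: the paper proves Corollary~\ref{cor:2-to-3} by applying Lemma~\ref{lem:chain-lemma} with $g_1=\cdots=g_{n-1}=b\in\AA$, precisely the degenerate repeated-generator choice you describe, and the collapse $\form{b,\ldots,b,a}=\form{a,b}=F$ together with the index count $k+1=n$ is the whole argument. Your additional bookkeeping remarks (the case $k\ge1$, and that the dynamical verification lives inside the lemma) are accurate but add nothing beyond the paper's one-line proof.
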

\bp Apply Lemma~\ref{lem:chain-lemma} for $g_1=\cdots=g_{n-1}=b\in\AA $.  \ep

\begin{lem}\label{lem:bA}
If $g\in \Homeo^+(1/4,1/2)\le\Homeo^+(\bR)$,
and if $c\in\AA $ satisfies $c(1/4,1/2)=(1/2,1)$, then  $c g\in\AA $.
\end{lem}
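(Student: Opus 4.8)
The plan is to verify directly that $cg$ satisfies the three defining conditions for membership in $\AA$, by tracking the image of a point $x$ first under $g$ and then under $c$. Since $c,g\in\Homeo^+(\bR)$, the composition $cg$ is automatically an orientation preserving homeomorphism, so only the piecewise conditions on the value $cg(x)$ need to be checked. I would first record that, because $c\in\AA$ is monotone and $c(1/4,1/2)=(1/2,1)$, continuity forces $c(1/4)=1/2$ and $c(1/2)=1$; this is the only structural fact about $c$ beyond its membership in $\AA$ that the argument uses.

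First I would dispose of all ranges on which $g$ acts trivially, i.e. where $x\notin\supp g\sse(1/4,1/2)$. For $x\le 0$ we have $g(x)=x$, and then $c(x)=x$ since $c\in\AA$, so $cg(x)=x$. For $x\ge1$ we similarly get $g(x)=x$ and $c(x)=x+1$, so $cg(x)=x+1$. For $0<x\le1/4$ and for $1/2\le x<1$ the point $x$ again lies outside $\supp g$, so $cg(x)=c(x)\in(x,x+1)$ directly from $c\in\AA$. This settles every range except the open interval $(1/4,1/2)$.

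The single case that uses the hypothesis on $c$ in an essential way is $1/4<x<1/2$. Here $g(x)\in(1/4,1/2)$, because $g$ restricts to a homeomorphism of that interval, and hence $cg(x)=c(g(x))\in c(1/4,1/2)=(1/2,1)$. It then remains to confirm the containment $(1/2,1)\sse(x,x+1)$ for such $x$: since $x<1/2$ we get $cg(x)>1/2>x$, and since $x>0$ we get $cg(x)<1<x+1$. Thus $cg(x)\in(x,x+1)$, which completes this last subcase and hence shows $cg\in\AA$.

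I expect no serious obstacle here, as the proof is a bounded case analysis. The only point requiring care is the middle interval, where one must combine two facts — that $g$ preserves $(1/4,1/2)$ and that $c$ sends this interval onto $(1/2,1)$ — and then verify the inclusion $(1/2,1)\sse(x,x+1)$ using the constraint $1/4<x<1/2$.
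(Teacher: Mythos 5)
Your proof is correct and follows essentially the same route as the paper's: split into the cases $x\notin(1/4,1/2)$ (where $cg(x)=c(x)$ and membership in $\AA$ is immediate) and $x\in(1/4,1/2)$ (where one uses that $g$ preserves $(1/4,1/2)$ and $c$ carries it onto $(1/2,1)\sse(x,x+1)$). The only cosmetic difference is that the paper verifies the middle case by applying $c^{-1}$ to the chain of inequalities $c^{-1}(x)<1/4<g(x)<1/2<c^{-1}(x+1)$, whereas you apply $c$ forwards; these are the same argument.
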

\bp
If $x\not\in(1/4,1/2)$, then $cg(x)=c(x)$.
If $x\in (1/4,1/2)$, then
\[
c^{-1}(x) <c^{-1}(1/2)= \frac14 < g(x) <\frac12 =c^{-1}(1) <  c^{-1}(x+1).\qedhere
\]
\ep

\begin{thm}\label{thm:fg-to-chain}
Let $G=\form{h_1,h_2,\ldots,h_n}\le\Homeo^+(\bR)$ for some $n\ge1$.
\be
\item
The group $G$ embeds into an $(n+2)$--chain group $L$ such that $L'$ is simple.
\item
If, moreover, $\supp h_1$ has finitely many components then
$G$ embeds into an $(n+1)$--chain group $L$ such that $L'$ is simple.
\ee
In both of the cases,
if the additional hypothesis $G/G'\cong\bZ^n$ holds, 
then we can further require that $G$ embeds into $L'$.
\end{thm}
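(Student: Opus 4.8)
The plan is to compress $G$ into a bounded interval via left--orderability, push its generators into $\AA$ by left--multiplying with the standard map $b$, apply Lemma~\ref{lem:chain-lemma}, and then deduce simplicity of $L'$ from minimality through Theorem~\ref{thm:dichotomy}. Concretely, for part~(1): since $G$ is finitely generated it is countable and, as a subgroup of $\Homeo^+(\bR)$, left--orderable, so Lemma~\ref{lem:dynamical realization} lets me re-realize it faithfully with $G=\form{h_1,\ldots,h_n}\le\Homeo^+(1/4,1/2)$, i.e. $\supp h_i\sse(1/4,1/2)$. Because $b(1/4,1/2)=(1/2,1)$, Lemma~\ref{lem:bA} gives $bh_i\in\AA$, so the $n+1$ elements $b,bh_1,\ldots,bh_n\in\AA$ together with $a$ generate, by Lemma~\ref{lem:chain-lemma}, a group $L=\form{a,b,h_1,\ldots,h_n}$ realized on $\bR$ as an $(n+2)$--chain group; as $h_i=b^{-1}(bh_i)$, we obtain $G\le L$. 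To see $L'$ is simple I would show $L$ acts minimally and invoke Theorem~\ref{thm:dichotomy}(i): the frame $F=\form{a,b}$ is conjugate, through the map $h$ of Figure~\ref{fig:gens}(c), to the standard minimal action of $F$ on $(0,1)$, hence acts minimally on $\bR$; since $F\le L$ and $\supp L=\bR$, every $L$--orbit contains a dense $F$--orbit and so $L$ is minimal.

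For part~(2) I would save one generator by absorbing $h_1$ into the frame, re-realizing $G$ so that $h_1\in F=\form{a,b}$ while $h_2,\ldots,h_n$ remain supported in $(1/4,1/2)$. Granting such a realization, $b,bh_2,\ldots,bh_n$ are $n$ elements of $\AA$ and, with $a$, generate an $(n+1)$--chain group $L=\form{a,b,h_2,\ldots,h_n}$ by Lemma~\ref{lem:chain-lemma}; then $h_1\in F\le L$ and $h_j=b^{-1}(bh_j)\in L$, so $G\le L$, and minimality of $F\le L$ makes $L'$ simple exactly as before. The hard part—and the only place the extra hypothesis enters—will be producing this realization. Since every element of $F$ has support with finitely many components (its elements are conjugate to piecewise linear maps), the assumption that $\supp h_1$ has finitely many components is precisely what should permit $h_1$ to be realized inside the Thompson frame $F$ with the remaining generators localized in $(1/4,1/2)$. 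I expect this step to be the main obstacle and the technical heart of the proof.

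Finally, under the extra hypothesis $G/G'\cong\bZ^n$, note that since $G$ is $n$--generated the images of $h_1,\ldots,h_n$ must form a basis of $\bZ^n$, so Lemma~\ref{lem:comm-lemma} applies to $G\le\Homeo^+(1/4,1/2)\sse\Homeo^+(0,1)$ and produces a copy $H\cong G$ inside $\form{G,a}'$. In either construction $\form{G,a}=\form{a,h_1,\ldots,h_n}\le L$, whence $\form{G,a}'\le L'$, and therefore $G\cong H\le\form{G,a}'\le L'$, which is the desired refinement.
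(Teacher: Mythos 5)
Your proposal is correct and is essentially the paper's own argument: the paper likewise places $\supp G$ inside $(1/4,1/2)$ (by a direct conjugation rather than by invoking Lemma~\ref{lem:dynamical realization}, which is immaterial since only the abstract isomorphism type of $G$ matters), applies Lemma~\ref{lem:bA} and Lemma~\ref{lem:chain-lemma} to form $L=\form{bB,b,a}$, deduces simplicity of $L'$ from minimality of $F=\form{a,b}$ together with Theorem~\ref{thm:dichotomy}, and obtains the refinement $G\hookrightarrow\form{G,a}'\hookrightarrow L'$ from Lemma~\ref{lem:comm-lemma}.

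The one step you leave open --- realizing $h_1$ inside $F$ while keeping the remaining generators supported in $(1/4,1/2)$ --- is exactly what the paper dispatches with the phrase ``we may further suppose that $h_1\in F$, possibly after a conjugation,'' and it is a routine fact rather than the technical heart you anticipate. Since $\supp h_1\sse(1/4,1/2)$ has finitely many components, choose $g\in F$ with $\supp g\sse(1/4,1/2)$ whose support has the same number of components and which moves points in the same direction as $h_1$ on corresponding components. Such $g$ exists: under the conjugacy $h$ of Figure~\ref{fig:gens}, the interval $(1/4,1/2)\sse\bR$ corresponds to an open subinterval of $(0,1)$ containing a dyadic interval $D$, and the elements of the PL model of $F$ supported in $D$ form a copy of $F$ containing elements of every such combinatorial type (take products of one--interval--supported generators of sub-copies on disjoint dyadic subintervals of $D$, raised to $\pm 1$). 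On each component of support, $h_1$ and $g$ restrict to fixed-point-free orientation-preserving homeomorphisms of open intervals moving in the same direction, hence are topologically conjugate; patching these conjugacies component by component, matching the complementary gaps, and extending by the identity outside $(1/4,1/2)$ yields $w\in\Homeo^+(\bR)$ supported in $(1/4,1/2)$ with $wh_1w^{-1}=g$. Conjugating all of $G$ by $w$ replaces $h_1$ by $g\in F$ while keeping every generator (including the new $h_1$) supported in $(1/4,1/2)$, which is precisely the realization you were granting; the crucial point, that the conjugator can be taken supported in $(1/4,1/2)$ so the other generators stay put, is what the hypothesis on $\supp h_1$ buys. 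With this inserted, your part (2) and your treatment of the $G/G'\cong\bZ^n$ refinement coincide with the paper's proof.
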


\bp
(1)
Write $G=\form{B}$ for some $B=\{h_1,\ldots,h_n\}\sse\Homeo^+(\bR)$.
We may suppose $\supp G\sse (1/4,1/2)$ after a suitable conjugation.
Lemma~\ref{lem:bA} implies that $b B\sse\AA $.
From Lemma~\ref{lem:chain-lemma}, we see that  $L=\form{B,b,a}=\form{bB, b,a}$ is an $(n+2)$--chain group.
Moreover, $L$ acts minimally since $F=\form{a,b}$ does as well. Hence, $L'$ is simple by Theorem~\ref{thm:dichotomy}.

If $G/G'\cong\bZ^n$, then 
Lemma~\ref{lem:comm-lemma} gives us a desired embedding $G\hookrightarrow \form{G,a}'\hookrightarrow L'$.

(2)
We assume $B=\{h_1,\ldots,h_n\}\sse\Homeo^+(1/4,1/2)$, and define
$L=\form{B,a,b}$.
We may further suppose that $h_1\in F=\form{a,b}$, possibly after a conjugation. 
Then \[L=\form{B\setminus\{h_1\},b,a}=\form{b(B\setminus\{h_1\}),b,a}\]
is an $(n+1)$--chain group.
The rest of the proof is identical to the part (1).
\ep

\begin{rem}\label{rem:minimal}
In Theorem~\ref{thm:fg-to-chain}, we build a chain group $G$ containing a two--chain subgroup $F$ which acts minimally. It is straightforward to see that this is indeed enough to guarantee that the whole chain group $G$ acts minimally. We revisit this fact below in Section~\ref{sec:uniqueness}, specifically in Lemma~\ref{lem:loc-dense}.
\end{rem}

Theorem~\ref{thm:embed} is implied by the part (1).

\subsection{Isomorphisms between chain groups}
In this subsection, we prove Proposition \ref{prop:subchain group}, which follows fairly easily from the ideas in Subsection \ref{subsec:continuous}.

\begin{thm}\label{thm:n to m}
For $m\ge n\ge 2$, every $n$--chain group is isomorphic to an $m$--chain group.
\end{thm}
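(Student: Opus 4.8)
The plan is to realize the given chain group in the ``normal form'' produced by Lemma~\ref{lem:chain-lemma} and then exploit the fact that this construction permits \emph{repeated} building blocks, exactly as in the proof of Corollary~\ref{cor:2-to-3}. Concretely, Lemma~\ref{lem:chain-lemma} applied to a list of $k$ elements of $\AA$ (not necessarily distinct) together with $a$ produces a $(k+1)$--chain group equal to the subgroup they generate. Thus, once I know that an $n$--chain group $G$ can be written as $G=\form{g_1,\ldots,g_{n-1},a}$ with each $g_i\in\AA$, I can, for any $m\ge n$, pad the list $(g_1,\ldots,g_{n-1})$ to length $m-1$ by repeating the single element $g_1$. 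Since $\form{g_1,g_1,g_2,\ldots,g_{n-1},a}=\form{g_1,\ldots,g_{n-1},a}=G$, Lemma~\ref{lem:chain-lemma} exhibits $G$ as an $m$--chain group, and no induction is needed. So the whole theorem reduces to placing $G$ in this $\AA$--normal form.

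The enabling observation is that $a$ can be made to lie in $G$. Writing $\FF=\{f_1,\ldots,f_n\}$ as in Setting~\ref{set} and normalizing so that $\supp G=\bR$, the element $h=f_n\cdots f_1$ satisfies $h(t)\ge t$ everywhere, with equality only when every $f_i$ fixes $t$; since $\bigcup_i\supp f_i=\supp G=\bR$, this never happens, so $h(t)>t$ for all $t$. Hence $h$ is a fixed-point-free orientation preserving homeomorphism of $\bR$, and is therefore topologically conjugate to the unit translation $a$. After replacing $G$ by a conjugate (which preserves both the chain structure and the standing hypotheses of Setting~\ref{set}), I may assume $f_n\cdots f_1=a$. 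In particular $a\in G$, and since $f_n=a f_1^{-1}\cdots f_{n-1}^{-1}$ is now redundant, $G=\form{f_1,\ldots,f_{n-1},a}$ is generated by $n-1$ homeomorphisms together with $a$.

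It remains to trade the generators $f_1,\ldots,f_{n-1}$ for elements $g_1,\ldots,g_{n-1}$ of $\AA$ generating the same group with $a$. This is the heart of the matter, and it amounts to inverting the explicit formulas of Lemma~\ref{lem:chain-lemma}. The idea is to first normalize the supports of the $f_i$, compatibly with $a$ being the unit translation, so that they occupy the standard staircase positions appearing in the claim inside Lemma~\ref{lem:chain-lemma}; one then reads off the building blocks by the inverse substitution, schematically $g_1=af_1^{-1}$ and, more generally, $g_i$ obtained by stripping off the appropriate $a$--conjugate from the telescoping product defining $f_i$. Each resulting $g_i$ is the identity to the left of its support and agrees with $a$ to the right, and after the normalization it moves each point of the relevant unit interval a distance strictly between $0$ and $1$; these are precisely the defining conditions for membership in $\AA$. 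Verifying that the support normalization can be carried out \emph{simultaneously} with the requirement that $f_n\cdots f_1$ be the literal translation $a$ is the main obstacle, and I would handle it by fixing the staircase supports first and then using the remaining freedom in the conjugacy (which is governed by a single choice on a fundamental domain of $a$) to arrange the displacements and germs correctly.

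Granting the $\AA$--normal form, the proof concludes exactly as in the first paragraph: for each $m\ge n$ I repeat $g_1$ enough times to obtain a list of $m-1$ elements of $\AA$, and Lemma~\ref{lem:chain-lemma} realizes $G$ as an $m$--chain group. Taking $m=n$ recovers the original chain structure, confirming consistency, and the construction is uniform in $m$.
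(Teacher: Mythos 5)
Your padding step is sound---Lemma~\ref{lem:chain-lemma} does accept repeated entries, and the paper itself exploits exactly this in Corollary~\ref{cor:2-to-3}---and so is your observation that $h=f_n\cdots f_1$ is fixed-point free once $\supp G=\bR$, hence conjugate to $a$, so that after conjugation $G=\form{f_1,\ldots,f_{n-1},a}$. The gap is the step you yourself flag as the heart of the matter: trading $f_1,\ldots,f_{n-1}$ for elements of $\AA$. This is not a technical normalization that the ``remaining freedom'' in the conjugacy can absorb; it is genuinely obstructed. Any conjugation preserving the identity $f_n\cdots f_1=a$ must commute with $a$, and every $c$ in the centralizer of $a$ satisfies $c(x+1)=c(x)+1$. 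Concretely, for $n=2$ let $g(x)=x$ for $x\le 0$, $g(x)=3x$ for $0\le x\le 1/2$, and $g(x)=x+1$ for $x\ge 1/2$, and set $f_2=g$, $f_1=g^{-1}a$. Then $\supp f_1=(-\infty,1/2)$, $\supp f_2=(0,\infty)$, $f_2f_1=a$, and $\form{f_1,f_2}$ is a $2$--chain group by Lemma~\ref{lem:dynamical condition}(1), since $f_2f_1(0)=1\ge 1/2$. Membership of the right-hand generator in $\AA$ forces $\partial^+\supp f_1=\partial^-\supp f_2+1$ (otherwise the strict condition $g(x)<x+1$ on $(0,1)$ fails), whereas here $\partial^+\supp f_1-\partial^-\supp f_2=1/2$; and for any $c$ commuting with $a$ we still have $c(1/2)<c(0)+1$, so the conjugated generator agrees with $x\mapsto x+1$ on the nondegenerate interval $[c(1/2),c(0)+1)$ and is never in $\AA$. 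So the simultaneous normalization you propose provably cannot be carried out, already for $n=2$.

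What would remain is the weaker claim that $G$ is \emph{abstractly} isomorphic to some group $\form{g_1,\ldots,g_{n-1},a}$ with $g_i\in\AA$, with no requirement that this arise by conjugating the given generators; but that claim is essentially the full strength of Theorem~\ref{thm:n to m}, and nothing in your argument establishes it. (For $n=2$ it holds only because every $2$--chain group is abstractly $F=\form{b,a}$, which is precisely how the paper handles the base case via Corollary~\ref{cor:2-to-3}.) For $n\ge 3$ the paper avoids any global normal form: it performs a local surgery on the last three generators $p=f_{n-2}$, $q=f_{n-1}$, $r=f_n$, setting $d=q^{-M}pq^M$, $f=(pr)^M q(pr)^{-M}$, $e=f^{-M}qf^M$, and uses Lemma~\ref{lem:dynamical condition} to check that for $M\gg 0$ the set $\{f_1,\ldots,f_{n-3},d,e,f^M,r\}$ generates the same group $G$ as an $(n+1)$--chain group, after which one inducts. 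To salvage your approach you would need an actual proof that every $n$--chain group admits an $\AA$--normal form; absent that, your argument only re-proves the case $n=2$.
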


\begin{proof}
The case $n=2$ follows from Corollary~\ref{cor:2-to-3}. Assume $n\ge3$ and let $\{f_1,f_2,\ldots,f_n\}$ generate $G$ as an $n$--chain group. We will show that $G$ is isomorphic to an $(n+1)$--chain group, which will establish the result by an easy induction. We consider the rightmost three intervals in the chain and we write $p=f_{n-2}$, $q=f_{n-1}$, and $r=f_n$.
We assume each generator moves points to the right in its support.

We let $M$ be a sufficiently large integer which will be determined later.
Define 
\[d=q^{-M}pq^M,\quad f=(pr)^M q(pr)^{-M},\quad e = f^{-M}qf^M.\] 
Notice that 
$d,e,f,r$ generate a $4$--prechain group, as shown in Figure~\ref{f:convert}.

Lemma~\ref{lem:dynamical condition} implies that for $M\gg0$,
we have
$\form{e,f^M}\cong\form{q,f^M}\cong\form{f^M,r}\cong F.$
Moreover, we have
\begin{align*}
&\form{f_{n-3},d}\cong{q^{M}}\form{f_{n-3},d}{q^{-M}}=\form{f_{n-3},p}\cong F,\\
&\form{d,e}\cong{f^{M}}\form{d,e}{f^{-M}}=\form{d,q}=\form{p,q}\cong F,\\
&\form{d,e,f^M,r}=\form{d,q,f^M,r}=\form{p,q,f^M,r}=\form{p,q,r}.\end{align*}
Finally, notice that the left endpoints of $\supp e$ and $\supp q$ coincide. Therefore, $G$ is an $(n+1)$--chain group generated by 
\[f_1,\ldots,f_{n-3},d,e,f^M,r.\qedhere\]
\ep

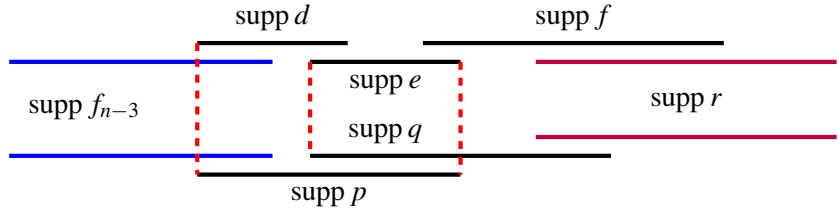
\begin{figure}[htb!]
\begin{tikzpicture}[ultra thick,scale=.5]
\draw [blue] (-10,0) -- (-3,0);
\draw (-8,-.5) node [below] {\small $\supp f_{n-3}$}; 
\draw (-2,0) -- (2,0);
\draw (0,0) node [below] {\small $\supp e$}; 
\draw [purple] (4,0) -- (12,0);
\draw (8,-.5) node [below] {\small $\supp r$};

\draw (-5,.5) -- (-1,.5);
\draw (-3,.5) node [above] {\small $\supp d$}; 

\draw (1,.5) -- (9,.5);
\draw (5,.5) node [above] {\small $\supp f$}; 
\draw (-5,-3) -- (2,-3);
\draw (-1.5,-3) node [below] {\small $\supp p$}; 
\draw [purple] (4,-2) -- (12,-2);

\draw (-2,-2.5) -- (6,-2.5);
\draw (0,.-2.5) node [above] {\small $\supp q$}; 
\draw [blue] (-10,-2.5) -- (-3,-2.5);

\draw [dashed,red] (-5,.5) -- (-5,-3);
\draw [dashed,red] (-2,0) -- (-2,-2.5);
\draw [dashed,red] (2,0) -- (2,-3);
\end{tikzpicture}
\caption{Converting an $n$--chain group to an $(n+1)$--chain group.}
\label{f:convert}
\end{figure}

From the presentation given in Section~\ref{ss:thompson}, it is easy to check that the abelianization of the $n^{th}$ Higman--Thompson group $F_n$ is exactly $\bZ^n$. From Theorem~\ref{thm:n to m}, we quickly obtain a proof of Proposition~\ref{prop:abelianization}:

\begin{proof}[Proof of Proposition~\ref{prop:abelianization}]
Let $n\geq 2$ and let $2\leq k\leq n$ be an integer. By Theorem~\ref{thm:n to m}, there is an $n$--chain group $G_k$ such that $G_k$ is isomorphic to the $k^{th}$ Higman--Thompson group $F_k$, so that $H_1(G_k,\bZ)\cong\bZ^k$.
\end{proof}

\subsection{Chain groups with non-simple commutator subgroups}

\begin{prop}\label{prop:blow-up}
For each $n\ge3$, there exists an $n$--chain group with a non-simple commutator subgroup.
\end{prop}

\bp
Let us describe an example of a $3$--chain group with a non-simple commutator subgroup.
We begin by finding a $3$--chain group whose minimalization map is not injective. 
We will find such an example by blowing-up (sometimes called \emph{Denjoy-ing}  after \cite{Denjoy1932}) an orbit.
The case $n\ge4$ will then follow by Theorem~\ref{thm:n to m}.

By Lemma~\ref{lem:chain-lemma},  the following generate $F\le\Homeo^+(\bR)$ as a $3$--chain group:
\[f_0=b^{-1}a,\quad f_1=ab^{-1}a^{-1}b,\quad f_2=aba^{-1}.\]
Pick the orbit $\OO=F(0)$, and let $\{\ell_y\co y\in\OO\}\sse\bR_{>0}$ satisfy
$\sum_{y\in\OO}\ell_y=1$.
Denote the closed interval $[0,\ell_y]$ by $I_y$.

We now replace each point $y\in\OO$ by the interval $I_y$.
Formally, we let
\[
\bR^\sim=(\bR\setminus\OO)\coprod\left(\coprod_{y\in\OO}I_y\right)\]
Define $q\co \bR^\sim\to\bR$ by  $q(x)=x$ for $x\in\bR\setminus\OO$ and $q(I_y)=\{y\}$,
and topologize $\bR^\sim$ by the devil's staircase map $q$. Note $\bR\approx \bR^\sim$.
We have a natural map
\[
\Phi\co \Homeo^+_{\bR\setminus\OO}(\bR^\sim)\to 
\Homeo^+_\OO(\bR)\]
from the set of $\bR\setminus\OO$--preserving homeomorphisms of $\bR^\sim$
to the set of $\OO$--preserving  homeomorphisms of $\bR$. The map $\Phi$ is 
 determined by the condition
\[q\circ f=\Phi(f)\circ q\quad\text{ for }f\in
\Homeo^+_{\bR\setminus\OO}(\bR^\sim).\]

For each $y,z\in\OO$, we will fix a homeomorphism $h^y_z\co I_y\to I_z$ which is \emph{equivariant} in the following sense (for example, we can choose $h^y_z$ to be linear):
\[
h^y_y=\Id\quad\text{and}\quad h^y_z \circ h^x_y=h^x_z\quad\text{for}\quad x,y,z\in\OO.\]
Then for $f\in \Homeo^+_\OO(\bR)$, the equation
\[\phi(f)=\prod_{y\in\OO} h^y_{f(y)}\]
defines a section $\phi$ of $\Phi$.

Write 
$g_i=\phi(f_i)$
for $i=0,2$.
We now pick an arbitrary homeomorphism $h$ such that $\supp h=I_1$,
and let $g_1=h\circ \phi(f_1)$. 
Note that $[h,g_0]=[h,g_2]=1$.

\setcounter{claim}{0}
\begin{claim}
The group $G=\form{g_0,g_1,g_2}$ is a three--chain group.
\end{claim}
We note $G$ is a prechain group as illustrated in Figure~\ref{f:blow-up}.
The claim follows from the dynamical condition of Lemma~\ref{lem:dynamical condition}.

\begin{figure}[htb!]
\begin{tikzpicture}[ultra thick,scale=.5]
\draw (-9,1.5)--(-7,1.5) (-5,1.5)--(-1,1.5);
\draw [blue] (-1,2.5)--(1,2.5) (-7,2.5)--(-5,2.5)   (7,2.5)--(5,2.5); 
\draw (-8,2.5) node [blue] {\small $\cdots$};
\draw (8,2.5) node [blue] {\small $\cdots$};
\draw (-3,2.5) node [blue] {\small $\cdots$};
\draw (3,2.5) node [blue] {\small $\cdots$};
\draw (9,1.5)--(7,1.5)  (5,1.5)--(1,1.5);
\draw (-6,2.5) node [above] {\small $I_0$};
\draw (0,2.5) node [above] {\small $I_{1}$};
\draw (6,2.5) node [above] {\small $I_2$};
\draw (10,3) node [right] {\small $\coprod_y I_y$};
\draw (10,1.5) node [right] {\small $\bR\setminus\OO$};
\draw (9,0) node [right] {\small $\supp g_2$};
\draw (-9,0) node [left] {\small $\supp g_0$};
\draw (0,-.5) node [below] {\small $\supp g_1$};
\draw (0,.3) node [above] {\small $\supp h$};
\draw [red] (-9,0) -- (-1,0);
\draw [teal] (-5,-.5)--(5,-.5);
\draw [purple] (9,0) -- (1,0);
\draw [cyan] (-1,.3) -- (1,.3);
\end{tikzpicture}
\caption{Blowing-up an orbit $\OO$. Note that $\bR\setminus\OO$ is totally disconnected.}
\label{f:blow-up}
\end{figure}
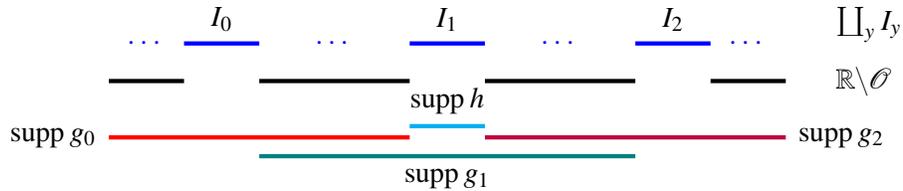

\begin{claim}\label{cla:prop} 
We have $1\ne h\in\ker\Phi\cap G$.
\end{claim}
The part $1\ne h\in\ker\Phi$ follows from the definition.
To show $h\in G$, we note
\[g_1g_0g_2g_1(g_2g_1g_0)^{-1}
=h^2\phi((f_1f_0)(f_2f_1)(f_2f_1f_0)^{-1})h^{-1}
=h^2\phi(ab^{-1}\cdot ba^{-1})h^{-1}=h.\]
So, $\Phi$ is not injective on $G$.

\begin{claim} 
We have $1\ne \ker\Phi\cap G'$.
\end{claim}

Since $1\neq h\in\ker\Phi$, we have that all conjugates of $h$ by elements of $G$ also lie in $\ker\Phi$ and are nontrivial. Choose an element $g\in G$ such that $g(I_1)=I_0$. Then $\supp gh^{-1}g^{-1}=I_0$, and the element $hgh^{-1}g^{-1}$ is nontrivial and lies in $G'$.
\ep

It is straightforward to blow-up an orbit of a minimal chain group to obtain a non-minimal chain group which is algebraically isomorphic to the original one. The proof of Proposition~\ref{prop:blow-up} shows how to alter the isomorphism type by a blow-up.

\section{Uncountability results}\label{sec:uncountable}

\subsection{Uncountable families of isomorphism types}\label{sec:neumann}
We first establish the existence of uncountably many isomorphism types of subgroups of $\Homeo^+(\R)$.

\begin{lem}\label{lem:left orderable}
There exists a two--generated left orderable group $\Gamma$ and a collection of normal subgroups $\{N_i\}_{i\in I}$ of $\Gamma$ with the following properties:
\begin{enumerate}
\item
The collection $\{N_i\}_{i\in I}$ is uncountable;
\item
For each $i$, the group $N_i<\Gamma$ is central;
\item
For each $i\in I$, the quotient $\Gamma_i=\Gamma/N_i$ is left orderable.
\end{enumerate}
\end{lem}

The group $\Gamma$ and its subgroups as in Lemma \ref{lem:left orderable} appear in III.C.40 of de la Harpe's book~\cite{MR1786869}. Here, we merely observe that $\Gamma$ and the quotients $\Gamma_i$ are all left orderable.

\begin{proof}[Proof of Lemma \ref{lem:left orderable}]
To establish the first two claims, we reproduce the argument given by de la Harpe nearly verbatim. Let $S=\{s_i\}_{i\in\bZ}$, and let \[R=\{[[s_i,s_j],s_k]=1\}_{i,j,k\in\bZ}\cup\{[s_i,s_j]=[s_{i+k},s_{j+k}]\}_{i,j,k\in\bZ}.\] Define $\Gamma_0=\langle S\mid R\rangle$ and let $\Gamma=\Gamma_0\rtimes\bZ$, where the conjugation action of $\bZ=\langle t\rangle$ is given by $t^{-1}s_it=s_{i+1}$. For each $i$, we set $u_i=[s_0,s_i]$. Note the following easy observations:
\begin{enumerate}[(i)]
\item
The group $[\Gamma_0,\Gamma_0]$ is central in $\Gamma$, is generated by $\{u_i\}_{i\in\bZ}$, and is isomorphic to an infinite direct sum of copies of $\bZ$; 
\item
The group $\Gamma$ is generated by $s_0$ and $t$;
\item
The quotient group $\Gamma/[\Gamma_0,\Gamma_0]$ is isomorphic to the lamplighter group $\bZ\wr\bZ$.
\end{enumerate}

For each subset $X\subset \bZ\setminus\{0\}$, we can consider the group $N_X=\langle u_i\mid i\in X\rangle$. Evidently these groups are distinct for distinct subsets of $\bZ\setminus\{0\}$, and they are all central (and hence normal) because $[\Gamma_0,\Gamma_0]$ is central in $\Gamma$. We thus establish the first two claims of the lemma.

For the third claim, note that the lamplighter group $\Z\wr\Z$ is left orderable since it lies as a subgroup of $F$ (see Lemma \ref{lem:lamplighter}). For each $X\subset\bZ\setminus\{0\}$, the groups $[\Gamma_0,\Gamma_0]/N_X$ are all free abelian and therefore left orderable. By Lemma \ref{lem:extension}, it follows that the group $\Gamma/N_X$ is left orderable.
\end{proof}

\begin{lem}\label{lem:uncountable}
There exist uncountably many isomorphism types of two--generated subgroups of $\Homeo^+(\R)$.
\end{lem}
\begin{proof}
By Lemma \ref{lem:dynamical realization}, it suffices to prove that there are uncountably many isomorphism types of two--generated left orderable groups. To this end, suppose there exist only countably many isomorphism types of two--generated left orderable groups. Then the class of groups $\mathcal{N}=\{\Gamma/N_X\}_{X\sse\bZ}$ furnished by Lemma \ref{lem:left orderable} consists of only countably many isomorphism types. It follows that there is an element $N\in\mathcal{N}$ and uncountably many surjective homomorphisms $\Gamma\to N$. Since $N$ and $\Gamma$ are both finitely generated, this is a contradiction.
\end{proof}

Note that the case $n\ge 4$ of Theorem \ref{thm:uncountable} follows from Lemma~\ref{lem:uncountable}.

\subsection{Uncountably many isomorphism types of $3$--chain groups}\label{sec:three chain}
We now prove Theorem \ref{thm:uncountable} in the case $n=3$. We retain notation from Subsection \ref{sec:neumann} and write $\mathcal{N}=\{\Gamma/N_X\}_{X\subset\Z}$. As before, each $N\in\mathcal{N}$ is generated by two elements, $s(=s_0)$ and $t$.

\begin{lem}\label{lem:fp free}
Let $N\in\mathcal{N}$ be generated by elements $s,t\in N$. There exists a faithful action of $N$ on $\bR$ such that the element $t\in N$ acts without fixed points.
\end{lem}

We thank the referee for sketching the following conceptual proof of the lemma, which is much easier to understand than the original provided by the authors. Before giving the proof, we recall two notions from orderability of groups: let $\prec$ be a left--invariant order on a group $G$. We say that an element $t\in G$ is \emph{cofinal} if for all $g\in G$ there is an $n\in\bZ$ such that $t^{-n}\prec g\prec t^n$. A subgroup $H<G$ is called \emph{convex} if whenever $a\prec b\prec c$ for $a,c\in H$ and $b\in G$, then $b\in H$.

There is a dictionary between order--theoretic notions and dynamical notions, which was developed by Navas in~\cite{NavasAIF10}. According to that dictionary, it suffices to find a left ordering on $N\in\mathcal{N}$ such that the element $t$ is cofinal with respect to that ordering.

\begin{proof}[Proof of Lemma~\ref{lem:fp free}]
Consider the natural surjection $N\to\Z\wr\Z$. Recall that then $\Z\wr\Z$ is a semidirect product of a copy $\langle t\rangle\cong\Z$ with a copy of $\oplus_{i=1}^{\infty}\Z$, where one of these copies of $\Z$ is generated by an element $s$. We have the following short exact sequence decomposition \[1\to\oplus_{i=1}^{\infty}\Z\to\Z\wr\Z\to\Z\to 1.\] Arbitrary orderings on the rightmost copy of $\Z$ and on $\oplus_{i=1}^{\infty}\Z$ combine to give a unique ordering on $\Z\wr\Z$ as in Lemma~\ref{lem:extension}. It is straightforward to check that with respect to such an ordering on $\Z\wr\Z$, the subgroup $\oplus_{i=1}^{\infty}\Z$ will be convex, and the generator $t$ of the rightmost $\Z$ will cofinal.

Now, the kernel of the surjection $N\to \Z\wr\Z$ is the group $K=[\Gamma_0,\Gamma_0]/N_X$, which being a direct sum of copies of $\Z$, if left orderable. Therefore, we may choose an arbitrary ordering on $K$ and build an ordering on $N$ from the short exact sequence \[1\to K\to N\to\Z\wr\Z\to 1.\] Again, it is straightforward to check that $K$ will become a convex subgroup in this ordering on $N$, and any cofinal element of the ordering on $\Z\wr\Z$ will remain cofinal in the ordering on $N$.
\end{proof}

Theorem~\ref{thm:fg-to-chain} and Lemma~\ref{lem:fp free} trivially imply the following:

\begin{lem}\label{lem:n-simple}
Each $N\in\NN$ embeds into the commutator subgroup of a minimal 3-chain group.
\end{lem}

For a more detailed discussion concerning why the resulting chain group is minimal, we refer the reader to Remark~\ref{rem:minimal} above and to Section~\ref{sec:uniqueness} below.

\bp[Proof of Theorem \ref{thm:uncountable}, in the case $n\geq 3$]
Note that the commutator groups arising from the conclusion of Lemma~\ref{lem:n-simple} are simple by Theorem~\ref{thm:dichotomy}.
Let $\CC$ be the class of the isomorphism types of $n$--chain groups that have simple commutator subgroups, and $\CC_0\sse\CC$ be the subclass consisting of finitely presentable ones. Then $\CC$ is uncountable from Lemma~\ref{lem:n-simple} and from that each countable group admits countably many isomorphism types of finitely generated subgroups.
Since $\CC_0$ is obviously countable, we have that $\CC\setminus\CC_0$ is uncountable.
\end{proof}

\bp[Proof of  Corollary \ref{cor:simple}]
The commutator subgroup of each group in $\CC$ has at most countably many finitely generated subgroups. 
Since each $N\in\NN$ embeds into some group $\CC$, we have the desired conclusion.
\ep

\subsection{Smoothability of chain groups}
We conclude this section with a remark on the diversity of non-$C^2$--smoothable and non-$C^1$--smoothable chain groups.
Recall the definition of the integral Heisenberg group: $H=\langle x,y,z\mid [x,y]z^{-1}=[x,z]=[y,z]=1\rangle$.

\begin{prop}\label{prop:heisenberg}
There exist uncountably many distinct isomorphism types of $3$--chain groups which admit no faithful $C^2$ actions on $[0,1]$.
\end{prop}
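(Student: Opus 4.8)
The plan is to plant a copy of the integral Heisenberg group $H$ inside uncountably many of the $3$--chain groups produced in Subsection~\ref{sec:three chain}, and then to invoke the classical theorem of Plante--Thurston that every nilpotent group of $C^2$ diffeomorphisms of the interval is abelian. Since $H$ is nilpotent of class $2$ and nonabelian, it admits no faithful $C^2$ action on $[0,1]$. Consequently, any group containing a copy of $H$ can admit no faithful $C^2$ action either: restricting a (hypothetical) faithful $C^2$ action of the larger group to the subgroup $H$ would yield a faithful $C^2$ action of $H$, which is impossible. Thus the whole problem reduces to finding uncountably many pairwise non-isomorphic $3$--chain groups, each of which contains $H$.

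The main work is to locate $H$ inside the Neumann--type groups $N=\Gamma/N_X$ of Subsection~\ref{sec:neumann}. Using the notation there, I would examine the subgroup $\form{s_0,s_1}\le\Gamma_0$. Since $\Gamma_0$ is nilpotent of class $2$ with $[\Gamma_0,\Gamma_0]=\form{u_i}$ central and free abelian (observation~(i)), and since $s_0,s_1$ map to independent generators of the $\bigoplus\bZ$ summand of the lamplighter quotient $\bZ\wr\bZ$ (observations (i) and (iii)), the subgroup $\form{s_0,s_1}$ has abelianization $\bZ^2$ and commutator subgroup the infinite cyclic group $\form{u_1}$ with $u_1=[s_0,s_1]$, so it is the free nilpotent group of class $2$ on two generators, i.e.\ a copy of $H$. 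Restricting to index sets $X\subseteq\bZ$ with $1\notin X$, one has $\form{u_1}\cap N_X=\{1\}$, because the $u_i$ form a basis of $[\Gamma_0,\Gamma_0]$; since moreover $\form{s_0,s_1}\cap[\Gamma_0,\Gamma_0]=\form{u_1}$, the subgroup $\form{s_0,s_1}$ maps injectively into $N=\Gamma/N_X$, and therefore $H\le N$.

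I would then pass to the (still uncountable) family $\NN'=\{\Gamma/N_X\mid 1\notin X\}$. The counting argument of Lemma~\ref{lem:uncountable} applies without essential change to show that $\NN'$ realizes uncountably many isomorphism types, since every member is two--generated and a fixed finitely generated group receives only countably many surjections from $\Gamma$. By Lemma~\ref{lem:n-simple}, each $N\in\NN'$ embeds into the commutator subgroup of a minimal $3$--chain group $G_N$, so that $H\le N\le G_N$. By the first paragraph, every such $G_N$ fails to admit a faithful $C^2$ action on $[0,1]$.

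It remains to see that uncountably many isomorphism types occur among the groups $G_N$. Here I would use that a single countable group has only countably many isomorphism types of finitely generated subgroups: if only countably many isomorphism types of the $3$--chain groups $G_N$ arose, then only countably many isomorphism types of the finitely generated groups $N\in\NN'$ could embed into them, contradicting the uncountability of $\NN'$. I expect the only delicate point to be the verification that the Heisenberg subgroup genuinely survives the quotient by $N_X$ rather than collapsing, which is precisely what forces the restriction $1\notin X$; granting this, the non-smoothability is an immediate consequence of Plante--Thurston.
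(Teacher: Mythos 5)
Your proposal is correct and follows essentially the same route as the paper: embed the integral Heisenberg group into the Neumann quotients $\Gamma/N_X$ via two of the $s_i$'s, pass to $3$--chain groups through Lemma~\ref{lem:n-simple}, apply Plante--Thurston, and conclude uncountability by counting finitely generated subgroups of countable groups. The only (inessential) difference is that you fix the pair $s_0,s_1$ and restrict to the uncountable subfamily with $1\notin X$, whereas the paper keeps all proper $X$ and chooses $i\in\bZ\setminus(X\cup\{0\})$ depending on $X$.
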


\bp
Let $X\subset\Z\setminus\{0\}$ be proper, and let $G=\Gamma/N_X\in\mathcal{N}$.
Fix $i\in \bZ\setminus(X\cup\{0\})$. Then we have a map $j\co H\to G$ defined by 
$(x,y,z)\mapsto (s_0,s_i,[s_0,s_i])$.
Since the abelianization of $\Gamma_0$ is torsion--free and freely generated by the generators $\{s_i\mid i\in\Z\}$, we have that $\ker j\le H'=Z(H)=\langle z\rangle$. If $j$ is not injective then the image of $z$ in $\Gamma_0$ is either trivial or has finite order. However, $u_i$ has infinite order in $\Gamma_0/N_X$, so that $j$ must be injective.

The Plante--Thurston Theorem~\cite{PT1976} states that every nilpotent subgroup of $\Diff^2(I)$ is abelian.
Since $H$ is nonabelian nilpotent
and embeds into $G$, we see $G$ does not admit a faithful $C^2$--action on $[0,1]$. The proposition follows from Lemma~\ref{lem:n-simple}.
\end{proof}

We now give a proof of Corollary~\ref{cor:c1}. Recall that in~\cite{LodhaMoore}, the third author and Moore studied a certain group of PL homeomorphisms of the interval, now known as the Lodha--Moore group. We shall require two facts about this group: the first is that it is $3$--generated and its abelianization is isomorphic to $\bZ^3$, as follows easily from the presentation given in~\cite{LodhaMoore}. The second fact is the following result of the third author with Bonatti and Triestino, which appears as~\cite[Theorem 3.2]{BLT2017}:

\begin{thm}\label{thm:BLT}
The Lodha--Moore group $G$ admits no nonabelian $C^1$ action on the interval.
\end{thm}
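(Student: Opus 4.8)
The plan is to argue by contradiction. Suppose $\rho\co G\to\Diff^1_+(I)$ is an action of the Lodha--Moore group whose image is nonabelian, and derive an incompatibility between the $C^1$ hypothesis and the combinatorial dynamics imposed by the defining relations of $G$. Write $G=\form{a,b,c}$, where $a,b$ generate a copy of Thompson's group $F$ and $c$ is the additional ``wandering'' generator; recall that $H_1(G,\bZ)\cong\bZ^3$, so every homomorphism from $G$ to an abelian group factors through this $\bZ^3$. The guiding philosophy is that of \emph{hyperbolicity as an obstruction}: a $C^1$ diffeomorphism has a continuous derivative, so if the algebra of $G$ forces a single point to be a fixed point whose derivative must be read off in two incompatible ways, we obtain a contradiction.

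First I would reduce to an \emph{irreducible} piece of the action. The global fixed-point set $\Fix(\rho(G))$ is closed, and on each component $(p,q)$ of its complement $\rho(G)$ acts with no interior global fixed point. Since the image is nonabelian, at least one such component $(p,q)$ carries a nonabelian restricted action, and I restrict attention to it. The purpose of this reduction is to manufacture boundary fixed points $p,q$ at which local linear data can be extracted.

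Second, I would analyze the germ of the action at the boundary point $p$ by two complementary tools. The log-derivative $\chi\co \Stab_{\rho(G)}(p)\to\bR$, $g\mapsto\log (\rho(g))'(p)$, is a homomorphism by the chain rule, and $\chi$ necessarily factors through $H_1(G,\bZ)\cong\bZ^3$. If $\chi$ is nontrivial, then some element has derivative $\ne 1$ at $p$, so $p$ is a \emph{hyperbolic} fixed point, and I pass to the expanding/contracting dynamics of the crux step below. If $\chi$ is trivial, so that every element is tangent to the identity at $p$, then Thurston's stability theorem applies and yields a further nontrivial homomorphism $G\to(\bR,+)$ recording the leading nontrivial jet at $p$; since this too must factor through $\bZ^3$, the possible germs are confined to a finite-dimensional family, and one argues that the germ subgroup at $p$ is in fact abelian. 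The same analysis is carried out at $q$.

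The crux, and the main obstacle, is converting these constraints on the germs at $p$ and $q$ into a genuine contradiction with the interior dynamics. Here I would exploit the specific algebra of the Lodha--Moore group: a fixed relation among $a,b,c$ (equivalently, a reduced cyclic word representing the identity) can be read dynamically so as to force an interior point $x_0$ to be a common fixed point of an element $w\in G$ expressed as a product of conjugates of the generators. Computing $(\rho(w))'(x_0)$ in two ways via the chain rule --- directly on one hand, and by transporting the germ at $x_0$ along the relation to the hyperbolic data at $p$ on the other --- forces $(\rho(w))'(x_0)$ to take two incompatible values, i.e.\ to be simultaneously $>1$ and $<1$, contradicting continuity of the derivative of $\rho(w)\in\Diff^1_+(I)$. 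The degenerate parabolic case, where all boundary derivatives equal $1$, is the most delicate: there the chain-rule identity becomes vacuous, and I would instead combine Thurston stability at both endpoints with the perfectness of a suitable commutator subgroup of $G$ to force the germ action to be trivial, contradicting the nonabelianness of the restricted action on $(p,q)$. Assembling the hyperbolic and parabolic cases completes the contradiction and establishes the theorem.
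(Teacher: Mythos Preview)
The paper does not prove this theorem. It is quoted as Theorem~3.2 of Bonatti--Lodha--Triestino~\cite{BLT2017} and used as a black box in the proof of Corollary~\ref{cor:c1}; there is no argument in the present paper to compare your proposal against.

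As a standalone attempt, your outline follows the philosophy of the cited work (whose title is precisely ``Hyperbolicity as an obstruction to smoothability for one-dimensional actions''), but the crux step is not actually carried out. You assert that ``a fixed relation among $a,b,c$ can be read dynamically so as to force an interior point $x_0$ to be a common fixed point of an element $w$'' and that $(\rho(w))'(x_0)$ can then be ``computed in two ways'' to yield incompatible values --- but you name no relation, no point $x_0$, no element $w$, and no computation. That is exactly where the content of such an argument lives; without it, what remains is the generic template (restrict to an irreducible component, take the derivative homomorphism at the endpoints, invoke Thurston stability in the tangent-to-identity case) that applies equally well to groups for which the conclusion is false. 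The parabolic branch has the same problem: ``combine Thurston stability at both endpoints with the perfectness of a suitable commutator subgroup'' is a plan rather than an argument, and you neither identify the subgroup nor explain why perfectness forces the restricted action to be abelian rather than merely forcing some quotient to be abelian. If you intend to supply a proof here rather than cite~\cite{BLT2017}, the missing ingredient is a concrete mechanism --- specific to the Lodha--Moore relations --- that produces the claimed derivative incompatibility.
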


\begin{proof}[Proof of Corollary~\ref{cor:c1}]
Let $N\in\mathcal{N}$ and let $G$ be the Lodha--Moore group. Then $N\times G$ is a $5$--generated subgroup of $\Homeo^+(I)$ which admits no faithful $C^1$ action on $I$, by Theorem~\ref{thm:BLT}, and whose abelianization is isomorphic to $\bZ^5$. By Theorem~\ref{thm:fg-to-chain} and Lemma~\ref{lem:fp free}, we have that $N\times G$ embeds into the commutator subgroup of a $6$--chain group, and moreover we may assume that this commutator subgroup is simple. Since every group of the form $N\times G$ is $5$--generated and since these groups fall into uncountably many isomorphism types as $N$ varies in $\mathcal{N}$, we obtain the desired conclusion of the corollary.
\end{proof}

\section{Local density of actions and the uniqueness of chain group actions}\label{sec:uniqueness}

In this section, we establish Theorem~\ref{thm:uniqueness}.

\subsection{Minimality, local density, and Rubin's Theorem}

Let $G$ be a group acting on a topological space $X$. We say that the action of $G$ on $X$ is \emph{minimal} if every $G$--orbit is dense in $X$. If $U\subset X$ is open, we write $G_U=\{g\in G\mid \supp g\subset U\}$, a subgroup of $G$ called the \emph{rigid stabilizer} of $U$. A subset $A\subset X$ is called \emph{non--nowhere dense} if the closure of $A$ in $X$ has nonempty interior.

The action of $G$ on $X$ is \emph{locally dense} if for each $x\in X$ and any neighborhood $U$ of $x$, the orbit $G_U.x$ is non--nowhere dense.

\begin{lem}\label{lem:minimal-dense}
Let $X$ be a connected topological space and let $G<\Homeo(X)$. Then $G$ is minimal if and only if each orbit of $G$ is non--nowhere dense.
\end{lem}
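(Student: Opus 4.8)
The plan is to prove the two implications separately, with the forward direction being essentially immediate and the converse carrying the substantive content.

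First I would dispose of the forward direction. If $G$ acts minimally, then every orbit $G.x$ is dense, so $\overline{G.x}=X$. Since $X$ is open in itself and nonempty, its interior is all of $X$, which is nonempty; hence each orbit is non--nowhere dense. For the converse, suppose every orbit is non--nowhere dense, fix $x\in X$, and set $Y=\overline{G.x}$. The goal is to show $Y=X$, which yields density of $G.x$ and, as $x$ is arbitrary, minimality of the action.

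The key structural observations are all about invariance under homeomorphisms. The set $Y$ is closed and $G$--invariant, so its interior $\operatorname{int}(Y)$ and its topological boundary $\partial Y=Y\setminus\operatorname{int}(Y)$ are $G$--invariant as well. I would then record the elementary fact that the boundary of any closed set is nowhere dense: if a nonempty open set $V$ were contained in $\partial Y\subseteq Y$, then $V\subseteq\operatorname{int}(Y)$, contradicting $V\cap\operatorname{int}(Y)=\varnothing$. Thus $\partial Y$ is a closed, $G$--invariant, nowhere dense set.

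Now I would split into two cases according to whether $\partial Y$ is empty. If $\partial Y=\varnothing$, then $Y=\operatorname{int}(Y)$ is simultaneously open and closed and nonempty, so connectedness of $X$ forces $Y=X$, as desired. If instead $\partial Y\ne\varnothing$, pick any $z\in\partial Y$; since $\partial Y$ is closed and $G$--invariant we get $\overline{G.z}\subseteq\partial Y$, so $\overline{G.z}$ is nowhere dense, contradicting the hypothesis that the orbit $G.z$ is non--nowhere dense. Hence only the first case can occur and $Y=X$.

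The crux of the argument—and the only step that is more than bookkeeping—is the boundary observation combined with connectedness: recognizing $\partial Y$ as a $G$--invariant nowhere dense set lets the non--nowhere dense hypothesis rule out a nonempty boundary, and then a proper orbit closure would be a nontrivial clopen subset, which connectedness forbids. Everything else reduces to the fact that a homeomorphism preserving a set also preserves its interior and boundary.
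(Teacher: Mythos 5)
Your proof is correct, and it takes a genuinely different (though closely parallel) route to the paper's. The paper argues directly that the orbit closure is open: writing $Y_x=\overline{G.x}$ and $U_x=\mathrm{int}(Y_x)$, it first observes that $U_x$ is nonempty (by hypothesis) and $G$--invariant, and since $G.x$ is dense in $Y_x$ the set $U_x$ must meet the orbit, whence invariance forces $x\in U_x$; then for any $t\in Y_x$ it chains $t\in U_t\subseteq Y_t=\overline{G.t}\subseteq Y_x$ to conclude that every point of $Y_x$ is interior, so $Y_x$ is clopen and connectedness finishes. You instead run the argument through the boundary: $\partial Y$ is closed, $G$--invariant, and nowhere dense (being the boundary of a closed set), so if it were nonempty it would contain the full closure of some orbit, making that orbit nowhere dense against the hypothesis; the empty-boundary case is then the same clopen-plus-connectedness step. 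Both proofs rest on the same two pillars --- homeomorphisms preserve interiors and closures, and a nonempty clopen subset of a connected space is everything --- but your version isolates a clean general lemma (the boundary of a closed invariant set is invariant and nowhere dense) and invokes the non--nowhere dense hypothesis at only a single boundary point, while the paper's version is direct rather than by contradiction and establishes along the way the slightly stronger structural fact that every point lies in the interior of its own orbit closure, which is the kind of statement that feeds naturally into the local density arguments appearing later in that section.
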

\begin{proof}
If $G$ is minimal then the closure of each orbit is equal to $X$ and is hence non--nowhere dense.

Suppose conversely that each orbit of $G$ is non--nowhere dense. 
For each $x$,  let 
\[Y_x = \overline{G.x},\quad U_x = \mathrm{int}(Y_x).\]
Notice that $Y_x$ and $U_x$ are nonempty and $G$--invariant.
Since $G.x$ is dense in $Y_x$, we have $U_x\cap G.x\ne\varnothing$.
So, $U_x$ is an open neighborhood of $x$.

Now let $x\in X$ and $t\in Y_x$ be arbitrary.
Then we have
\[
t\in U_t\sse Y_t=\overline{G.t}\sse Y_x.\]
This implies that $t\in\mathrm{int} (Y_x)$.
Since $Y_x$ is closed and open, we see $X=Y_x$. The lemma follows.
\end{proof}

Rubin's Theorem provides the connection between locally dense actions of groups and topological conjugacy:

\begin{thm}[Rubin's Theorem, cf.~\cite{BrinGD04,Rubin96}]\label{thm:rubin}
Let $G<\Homeo(X)$ and $H<\Homeo(Y)$ be groups of homeomorphisms of locally compact Hausdorff topological spaces $X$ and $Y$ respectively, and assume that neither $X$ nor $Y$ has isolated points. Suppose furthermore that $G$ is isomorphic to $H$ and that the actions of $G$ and $H$ are locally dense. Then an isomorphism $\phi\colon G\to H$ induces a unique homeomorphism $h\colon X\to Y$ such that $\phi(g)=h\circ g \circ h^{-1}$.
\end{thm}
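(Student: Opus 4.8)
The final statement is Rubin's reconstruction theorem, a substantial external result; what follows is the standard reconstruction strategy one would use to prove it. The aim is to show that the \emph{abstract} isomorphism $\phi\colon G\to H$ already determines the topologies of $X$ and $Y$, in the sense that all of the topological data can be read off group-theoretically from the lattice of rigid stabilizers. The plan is: (i) recover the complete Boolean algebra of regular open sets of $X$ from $G$ using rigid stabilizers; (ii) recover the points of $X$ from this lattice together with the action; (iii) run the same reconstruction for $Y$, transport it across $\phi$, and read off the homeomorphism $h$; (iv) check that $h$ intertwines the actions and is unique.

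For step (i): write $U^{*}=\mathrm{int}(X\setminus\overline U)$ for the pseudocomplement, so that $U\mapsto U^{*}$ makes the regular open sets $\mathrm{RO}(X)$ into a complete Boolean algebra. The first task is to show that the assignment $U\mapsto G_U$ translates all of this structure into group theory. Concretely, I would prove, using local density and the absence of isolated points: (a) $G_U\ne\{1\}$ and $G_U$ is nonabelian for every nonempty open $U$; (b) $[G_U,G_V]=1$ if and only if $U\cap V=\varnothing$, the forward direction being trivial and the reverse coming from non-commuting elements of $G_{U\cap V}$; and (c) the centralizer computation $C_G(G_U)=G_{U^{*}}$, whose content is that any $g$ commuting with all of $G_U$ must be supported off $\overline U$ --- this is exactly where local density is essential. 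Combining (b) and (c) yields $G_U=C_G(C_G(G_U))$ for regular open $U$, so that the rigid stabilizers of regular open sets form a bicommutant-closed family with $U\sse V\iff C_G(G_V)\sse C_G(G_U)$. This exhibits $\mathrm{RO}(X)$, as a complete Boolean algebra, inside the subgroup lattice of $G$ in a purely group-theoretic way.

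For steps (ii) and (iii): a Boolean-algebra isomorphism of regular open algebras need not come from a point map, so individual points must be recovered using the action. I would recover a point $x$ as the filter of regular open neighborhoods of $x$, characterized group-theoretically as a maximal coherent family of rigid stabilizers whose ``germ'' is detected by the orbits $G_U.x$ being non--nowhere dense; local compactness and the no-isolated-point hypothesis guarantee that such filters are in bijection with $X$ and that the reconstructed topology is the original one. Since $\phi$ preserves the group-theoretic data of step (i), it carries the regular-open lattice of $X$ isomorphically to that of $Y$ and carries point-filters to point-filters, yielding a bijection $h\colon X\to Y$. That $h$ and $h^{-1}$ are continuous follows because $h$ preserves the lattice of regular open sets, and the identity $\phi(g)=h\circ g\circ h^{-1}$ is verified by evaluating both sides on the reconstructed points. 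Uniqueness of $h$ follows from this conjugation relation together with the absence of isolated points: $h$ is pinned down on a dense set by $\phi$, and two homeomorphisms agreeing on a dense set agree everywhere.

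The hard part will be step (i)(c) together with the point-recovery in (ii): the heart of Rubin's theorem is precisely the purely group-theoretic characterization of the rigid stabilizers of regular open sets and, from them, of individual points, and it is here that all three hypotheses --- local density, local compactness, and the absence of isolated points --- enter in an essential and interlocking way. Everything else amounts to bookkeeping that transports this reconstruction across $\phi$.
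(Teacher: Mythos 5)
The first thing to note is that the paper does not prove this statement at all: it is imported as a black box (``Rubin's Theorem, cf.~Brin, Rubin''), and its only role is to be applied, together with Lemma~\ref{lem:loc-dense}, in the short proof of Theorem~\ref{thm:uniqueness}. So there is no proof in the paper to compare yours against; the question is whether your outline would stand on its own as a proof of the cited result.

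It would not, although it is an accurate map of the territory, and it does follow the actual strategy of Rubin's and Brin's proofs. Your step (i) is correct and completable exactly as you state it: (a), (b), and the centralizer identity $C_G(G_U)=G_{U^{*}}$ all follow from local density plus the absence of isolated points via the disjoint-support conjugation trick --- if $g$ moved a point of $U$, pick an open $V\subseteq U$ with $g(V)\cap V=\varnothing$, take $1\neq f\in G_V$ (nontrivial by local density), and note that $gfg^{-1}$ and $f$ have disjoint nonempty supports, hence differ, so $g\notin C_G(G_U)$; continuity and openness of the moved set then place $\supp g$ inside $U^{*}$. The genuine gap is in steps (ii)--(iii), which you explicitly defer: the purely group-theoretic characterization of points as suitable filters of rigid stabilizers, the proof that each such filter determines a unique point (this is where local compactness does real work), the proof that the induced bijection and its inverse are continuous, and the verification of the intertwining identity. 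This is not bookkeeping; it constitutes the bulk of Rubin's argument, and your phrase ``maximal coherent family of rigid stabilizers whose germ is detected by the orbits $G_U.x$ being non--nowhere dense'' is a description of what must be proven, not a construction. As written, your text is a correct proof plan whose hardest portion is acknowledged but absent --- which, to be fair, is also how the paper itself treats the theorem.
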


\subsection{Minimal chain group actions}

We now specialize to the case of a chain group $G<\Homeo^+(I)$, with the assumption that $\supp G=(0,1)\subset [0,1]=I$. Throughout this section, we say that $G$ acts minimally on $I$ if it acts minimally on $(0,1)$.

\begin{lem}\label{lem:loc-dense}
For a chain group $G$ acting on $I$, the following conditions are equivalent:
\begin{enumerate}
\item
The action of $G$ is minimal;
\item
The diagonal action of $G$ on the set \[X=\{(x,y)\in (0,1)\times (0,1)\mid x<y\}\] is minimal;
\item
Each orbit of $G$ is non--nowhere dense;
\item
The action of $G$ is locally dense;
\item
For each connected open set $U\subset I$, the group $G_U$ acts minimally on $U$;
\item
For each open set $U\subset I$, we have $\supp G_U=U$.
\end{enumerate}
\end{lem}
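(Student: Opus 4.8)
The plan is to prove the chain of equivalences for Lemma~\ref{lem:loc-dense} by establishing implications in a cycle, using the connectedness lemma (Lemma~\ref{lem:minimal-dense}) as the engine that upgrades ``non--nowhere dense'' to genuine minimality, and then relating rigid stabilizers back to the whole group via the CO-transitivity machinery already developed in Lemma~\ref{lem:prechain} and Theorem~\ref{thm:dichotomy}. First I would dispose of the easy links. The implication $(1)\Rightarrow(3)$ is trivial since minimality makes each orbit closure all of $(0,1)$. The converse $(3)\Rightarrow(1)$ is exactly Lemma~\ref{lem:minimal-dense} applied to the connected space $(0,1)$. For $(4)\Rightarrow(3)$, note that taking $U$ to be an arbitrary neighborhood of $x$ and observing $G_U.x\subseteq G.x$ shows $G.x$ is non--nowhere dense whenever the orbit of a rigid stabilizer already is. The substantive content lies in the reverse direction and in the two reformulations $(2)$ and $(5)$--$(6)$.

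The key step is $(1)\Rightarrow(6)$: assuming $G$ acts minimally, I must show every open $U$ has $\supp G_U=U$, i.e.\ the rigid stabilizer of $U$ genuinely moves points throughout $U$. Here I would exploit that $G' $ is simple and consists of compactly supported homeomorphisms (from the proof of Theorem~\ref{thm:dichotomy}) together with the CO-transitivity of $G'$: given a point $p\in U$ and a small compact interval $A\subset U$ around $p$, minimality plus local CO-transitivity lets me find a conjugate of a nontrivial commutator whose support sits inside a prescribed small neighborhood of $p$ contained in $U$. Concretely, one takes any $1\neq g\in G$ with small support, and uses an element $u$ with $u(\supp g)\subset U$ so that $ugu^{-1}\in G_U$ is nontrivial near $p$. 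Varying $p$ shows $\supp G_U=U$. Then $(6)\Rightarrow(5)$ follows because $\supp G_U=U$ on a connected $U$ forces the $G_U$-orbits to be non--nowhere dense in $U$ (no proper $G_U$-invariant closed subset can have interior, by the same supports argument), and applying Lemma~\ref{lem:minimal-dense} to the connected space $U$ gives minimality of $G_U$ on $U$. Finally $(5)\Rightarrow(4)$ is immediate: if $G_U$ acts minimally on the connected neighborhood $U$ of $x$, then $G_U.x$ is dense in $U$, hence non--nowhere dense.

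For the diagonal condition $(2)$, I would prove $(1)\Leftrightarrow(2)$ separately. The direction $(2)\Rightarrow(1)$ is clear by projecting to either coordinate. For $(1)\Rightarrow(2)$, given two pairs $(x,y)$ and $(x',y')$ with $x<y$ and $x'<y'$, I want to move $x$ near $x'$ and simultaneously $y$ near $y'$; this is a ``double transitivity'' statement, and it is where I would again lean on the rigid stabilizers established in $(6)$. The trick is to first send $x$ close to $x'$ using some $g\in G$, then correct the image of $y$ using an element of $G_V$ for a small interval $V$ separating $g(x)$ from $y$ and containing the target region---the point being that $G_V$ fixes a neighborhood of $x'$ and so does not disturb the first coordinate. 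Because $\form{f_i,f_{i+1}}\cong F$ acts with the rich transitivity of Thompson's group, these local adjustments can be made to arbitrary precision.

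The main obstacle I anticipate is the $(1)\Rightarrow(6)$ step, specifically producing nontrivial elements of the rigid stabilizer $G_U$ for an \emph{arbitrary} small open $U$, rather than for the distinguished intervals $\supp f_i$ coming from the chain structure. The chain generators only give us controlled supports at finitely many locations, so one genuinely needs the combination of minimality (to translate $U$ into a useful position) and CO-transitivity of $G'$ (to conjugate a fixed small-support element into $U$). I would be careful that the conjugating element $u$ can be chosen so that $u(\supp g)$ lands compactly inside $U$ rather than merely meeting it, which is precisely the content of CO-transitivity as opposed to ordinary transitivity; Lemma~\ref{lem:prechain}(4) and Lemma~\ref{lem:cpt-trans} supply exactly this, and the compact support of $g\in G'$ is what makes $u(\supp g)$ a proper compact set to which CO-transitivity applies.
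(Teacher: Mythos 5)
Your overall architecture --- the cycle $(1)\Rightarrow(6)\Rightarrow(5)\Rightarrow(4)\Rightarrow(3)\Rightarrow(1)$ together with $(1)\Leftrightarrow(2)$ --- is legitimate, and the easy links ($(1)\Leftrightarrow(3)$ via Lemma~\ref{lem:minimal-dense}, $(4)\Rightarrow(3)$, $(5)\Rightarrow(4)$) are fine. The genuine gap is in $(1)\Rightarrow(6)$, which is exactly where the content of the lemma lives. CO-transitivity gives only one-sided control: for a compact $A$ and open $B$ it produces $u$ with $u(A)\subseteq B$. Conjugating a fixed compactly supported $1\neq g$ by such a $u$ therefore yields nontrivial elements of $G_U$ whose supports lie in any prescribed neighborhood $B\subseteq U$ of $p$, but nothing forces $p$ itself to lie in $u(\supp g)$; the support can perfectly well land just to one side of $p$ every time. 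What ``varying $p$'' actually proves is that $\supp G_U$ is an open \emph{dense} subset of $U$, not that it equals $U$. This weaker conclusion cannot feed the rest of your argument: passing from (6) onward genuinely requires that $G_U$ have \emph{no} fixed point in $U$ (compare the paper's $(6)\Rightarrow(1)$ step, where $\sup\{g.x\mid g\in G_{(0,y)}\}$ is shown to be a global fixed point of $G_{(0,y)}$ and hence forced to equal $y$), and a fixed-point set that is closed and nowhere dense is not excluded by density of the support. Relatedly, your justification of $(6)\Rightarrow(5)$ (``no proper $G_U$-invariant closed subset can have interior'') is the wrong condition: Lemma~\ref{lem:minimal-dense} needs orbit closures to \emph{have} nonempty interior, and a Denjoy-type action with an exceptional minimal set satisfies your condition while failing to be minimal, so that parenthetical does not prove what you need.

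The paper sidesteps this trap by proving $(1)\Rightarrow(2)$ \emph{first}, directly from the chain structure: conjugates of the outermost generator $f_n$ have supports of the form $(b,1)$ with $b$ placeable anywhere by minimality, so one can fix the first coordinate of a pair while pushing the second toward a prescribed target. It is precisely this \emph{two-point} control that then yields, in $(2)\Rightarrow(4)$, a conjugate $g_U$ with $x\in\supp g_U\subset U$: both ends of the relevant support interval are pinned on either side of $x$ while staying inside $U$, which one-point control via CO-transitivity cannot accomplish. If you want to salvage your route to (6), you need an extra mechanism, for instance: since $G'$ is normal and $G$ is minimal, no point of $\supp G$ is fixed by all of $G'$ (else $G'$ would fix the closure of a $G$-orbit, hence everything, contradicting nonabelianness), so for $x\in U$ there is a compactly supported $h\in G'$ with $h(x)\neq x$; then conjugate $h$ by an element $w$ \emph{fixing} $x$ that contracts the support hull of $h$ into the component of $U$ containing $x$, where $w$ is assembled from conjugates of $f_1$ and $f_n$ exactly as in the paper's $(1)\Rightarrow(2)$ argument. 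As written, your proposal lacks this mechanism, and the step you yourself flagged as the main obstacle is indeed where it breaks --- though the care you promise there (that $u(\supp g)$ land inside $U$) addresses the wrong issue, since landing inside $U$ is what CO-transitivity already gives, while covering the point $p$ is what it cannot give.
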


\begin{proof}
Lemma~\ref{lem:minimal-dense} implies the equivalence of (1) and (3), as well as of (4) and (5). We have that (2) implies (1) trivially.

{\bf (1) implies (2).} Assume that $G$ acts minimally on $(0,1)$ and let $(x_1,y_1)$ and $(x_2,y_2)$ be given. First, we may apply an element of $G$ to $(x_1,y_1)$ to get a pair $(x,y)$ such that $|x-x_2|$ is as small as we wish. There are two cases to consider now: $x<y_2<y$ and $x<y<y_2$.

We treat the case $x<y_2<y$ first. Let $f_k$ be the generator of the chain group $G$ such that $\sup\supp f_k=1$. We then have $\supp f_k=(a,1)$. By the minimality of the action of $G$ on $I$, we may replace $f_k$ by a suitable conjugate $g_k$ such that $\supp g_k=(b,1)$ for some $x<b<y_2<y$, with $|b-y_2|$ as small as we wish. Then, replacing $g_k$ by its inverse if necessary, we have that $g_k$ fixes $x$ and, $g_k^n(y)\to b$ as $n\to\infty$. It follows in this case that $(x_2,y_2)$ is in the $G$--orbit closure of $(x_1,y_1)$.

To treat the case $x<y<y_2$, we retain the setup of the previous case, and note that $g_k^{-n}(y)\to 1$ as $n\to\infty$. Thus, we may apply a sufficiently negative power of $g_k$ to $y$ so that \[g_k^{-n}(x)=x<y<y_2<g_k^{-n}(y).\] We have thus reduced the situation to the first case we have already treated, whence (1) implies (2).

{\bf (2) implies (4).} Suppose (2) holds, and let $U\subset I$ be an open set. Since $G$ is a chain group, it is straightforward to find an element $g\in G$ whose support is compactly contained in $(0,1)$, i.e. $\supp g\subset K\subset (0,1)$ for some compact subset $K$. If $x\in U$, then (2) implies that some suitable conjugate $g_U$ of $g$ satisfies $x\in\supp g_U\subset U$. In particular, $G_U$ is nontrivial.

We claim that the $G_U$--orbit of $x$ is dense in $U_0$, the connected component of $U$ containing $x$. If $y\in U_0$, then (2) again implies that for all $\epsilon>0$ there is a conjugate $g_{y,\epsilon}$ of $g_U$ such that $x\in\supp g_{y,\epsilon}\subset U$ and such that $d(y,\partial\supp g_{y,\epsilon})<\epsilon$. Since under the action of $g_{y,\epsilon}$ the orbit of $x$ accumulates on both points in $\partial\supp g_{y,\epsilon}$, we have that $y$ lies in the $G_U$--orbit closure of $x$. It follows that the action of $G$ is locally dense, so that (2) implies (4).

{\bf (5) implies (6).} Suppose that $G_{U_0}$ acts minimally on $U_0$ for each connected subset $U_0\subset (0,1)$, and let $U\subset (0,1)$ be open. Then for each $x\in U$ there is an element $g_x\in G_U$ such that $g_x(x)\neq x$. It follows that $\supp G_U=U$, so that (5) implies (6).

{\bf (6) implies (1).} 
Let $0<x<y<1$.
From $\supp G_{(0,y)}=(0,y)$  we have
\[ y=\sup \{g.x\mid g\in G_{(0,y)}\}.\]
It follows that $y$ is in the closure of $G.x$. This implies the minimality of $G$.
\end{proof}

We now give a proof of Theorem~\ref{thm:uniqueness}, as claimed in the introduction.
\begin{proof}[Proof of Theorem~\ref{thm:uniqueness}]
Let $G,H<\Homeo^+(I)$ be chain groups acting minimally on $(0,1)$. Then Lemma~\ref{lem:loc-dense} implies that the action of $G$ is locally dense. Rubin's Theorem (Theorem~\ref{thm:rubin}) implies that every isomorphism between $G$ and $H$ is induced by a homeomorphism of $I$ intertwining the actions of $G$ and $H$, i.e. a topological conjugacy.
\end{proof}

Proposition~\ref{prop:blow-up} furnishes chain groups whose actions on $I$ have wandering intervals, whereby Theorem~\ref{thm:uniqueness} does not apply. Collapsing the wandering intervals for such a chain group $G$ (i.e. minimalization of the action) furnishes a semi--conjugacy between the action of $G$ and another chain group $H$, whose natural action on $I$ is minimal. The content of Proposition~\ref{prop:blow-up} is that this semi--conjugacy may not induce an isomorphism of groups. Since $2$--chain groups are always isomorphic to $F$, such semi--conjugacies always induce isomorphisms of groups for $2$--chain groups.

\section*{Acknowledgements}
The authors thank C. Bleak, M. Brin, V. Guirardel, J. McCammond, J. Moore, and S. Witzel for helpful discussions. The authors are particularly grateful to A. Navas for several very insightful comments which greatly improved the paper. The authors thank an anonymous referee for many constructive comments. The second and third author thank the hospitality of the Tata Institute of Fundamental Research in Mumbai, where this research was initiated. The authors thank the hospitality of the Mathematical Sciences Research Institute in Berkeley, where this research was completed. 
The first author is supported by Samsung Science and Technology Foundation (SSTF-BA1301-06).
The second author is partially supported by Simons Foundation Collaboration Grant number 429836, by an Alfred P. Sloan Foundation Research Fellowship, and by NSF Grant DMS-1711488.
The third author is funded by an EPFL-Marie Curie fellowship.


\def\cprime{$'$} \def\soft#1{\leavevmode\setbox0=\hbox{h}\dimen7=\ht0\advance
  \dimen7 by-1ex\relax\if t#1\relax\rlap{\raise.6\dimen7
  \hbox{\kern.3ex\char'47}}#1\relax\else\if T#1\relax
  \rlap{\raise.5\dimen7\hbox{\kern1.3ex\char'47}}#1\relax \else\if
  d#1\relax\rlap{\raise.5\dimen7\hbox{\kern.9ex \char'47}}#1\relax\else\if
  D#1\relax\rlap{\raise.5\dimen7 \hbox{\kern1.4ex\char'47}}#1\relax\else\if
  l#1\relax \rlap{\raise.5\dimen7\hbox{\kern.4ex\char'47}}#1\relax \else\if
  L#1\relax\rlap{\raise.5\dimen7\hbox{\kern.7ex
  \char'47}}#1\relax\else\message{accent \string\soft \space #1 not
  defined!}#1\relax\fi\fi\fi\fi\fi\fi}
\providecommand{\bysame}{\leavevmode\hbox to3em{\hrulefill}\thinspace}
\providecommand{\MR}{\relax\ifhmode\unskip\space\fi MR }
\providecommand{\MRhref}[2]{%
  \href{http://www.ams.org/mathscinet-getitem?mr=#1}{#2}
}
\providecommand{\href}[2]{#2}

\end{document}